\newcommand{\bx}{\boldsymbol{x}}
\newcommand{\calM}{\mathcal{M}}
\newcommand{\bR}{\mathbb{R}}
\newcommand{\calP}{\mathcal{P}}
\newcommand{\calR}{\mathcal{R}}
\newcommand{\calT}{\mathcal{T}}
\newcommand{\norm}[1]{\left\| #1\right\|}
\DeclareMathOperator{\diag}{diag}
\newlength{\leftstackrelawd}
\newlength{\leftstackrelbwd}
\def\leftstackrel#1#2{\settowidth{\leftstackrelawd}%
{${{}^{#1}}$}\settowidth{\leftstackrelbwd}{$#2$}%
\addtolength{\leftstackrelawd}{-\leftstackrelbwd}%
\leavevmode\ifthenelse{\lengthtest{\leftstackrelawd>0pt}}%
{\kern-.5\leftstackrelawd}{}\mathrel{\mathop{#2}\limits^{#1}}}
\numberwithin{equation}{section}
\newtheorem{theorem}{Theorem}[section]
\newtheorem{lemma}[theorem]{Lemma}
\newtheorem{remark}[theorem]{Remark}
\newtheorem{assumption}[theorem]{Assumption}
\newtheorem{definition}[theorem]{Definition}
\newtheorem{corollary}[theorem]{Corollary}
\title[The Gross-Pitaevskii eigenvalue problem]{Fully discretized Sobolev gradient flow for the Gross-Pitaevskii eigenvalue problem}
\author{Ziang Chen}
\address{(ZC) Department of Mathematics, Massachusetts Institute of Technology, 77 Massachusetts Avenue, Cambridge, MA 02139.}
\email{ziang@mit.edu}
\author{Jianfeng Lu}
\address{(JL) Departments of Mathematics, Physics, and Chemistry, Duke University, Box 90320, Durham, NC 27708.}
\email{jianfeng@math.duke.edu}
\author{Yulong Lu}
\address{(YL) School of Mathematics, University of Minnesota, 206 Church Street SE, Minneapolis, MN 55455.}
\email{yulonglu@umn.edu}
\author{Xiangxiong Zhang}
\address{(XZ) Department of Mathematics, Purdue University, 150 N. University Street, West Lafayette, IN 47907.}
\email{zhan1966@purdue.edu}
\date{\today}
\thanks{The work of ZC and JL is supported in part by National Science Foundation via awards DMS-2012286 and DMS-2309378. YL thanks the support from the National Science Foundation through the award DMS-2343135 and the support from the Data Science Initiative at University of Minnesota through a MnDRIVE DSI Seed Grant.  XZ is supported by NSF  DMS-2208518. XZ is grateful to Prof. Weizhu Bao and Prof. Yongyong Cai for discussions of discrete gradient flows.}
\begin{document}
\begin{abstract}
    This paper studies the numerical approximation of the ground state of the Gross-Pitaevskii (GP) eigenvalue problem with a fully discretized Sobolev gradient flow induced by the $H^1$ norm. For the spatial discretization, we consider the finite element method with quadrature using $P^k$ basis on a simplicial mesh and $Q^k$ basis on a rectangular mesh. We  prove the global convergence to a critical point of the discrete GP energy, and establish a local exponential convergence to the ground state under the assumption that the linearized discrete Schr\"odinger operator has a positive spectral gap. We also show that for the $P^1$ finite element discretization with quadrature on an unstructured shape regular simplicial mesh, the eigengap satisfies a mesh-independent lower bound, which implies a mesh-independent local convergence rate for the proposed discrete gradient flow. Numerical experiments with discretization by high order $Q^k$ spectral element methods in two and three dimensions are provided to validate the efficiency of the proposed method.
\end{abstract}

\maketitle

\section{Introduction} 
\subsection{The Gross-Pitaevskii eigenvalue problem} 

A standard mathematical model of the equilibrium states in Bose–Einstein condensation (BEC) \cites{bose1924plancks,einstein1925quantentheorie,dalfovo1999theory,pitaevskii2003bose} is through the minimization of the Gross-Pitaevskii energy. For $N$ identical bosons, with an scattering length $a$ and an external potential $V(\bx)$, the Gross-Pitaevskii (GP) energy functional is defined as 
\[\mathcal E^{\text{GP}}(\phi)= \int_{\mathbb R^3}\left(|\nabla \phi(\bx)|^2 +V(\bx) |\phi(\bx)|^2  +4\pi a | \phi(\bx) |^4 \right)\mathrm{d}\bx,  \]
and the GP energy, denoted by $E^{\text{GP}}(N, a)$,  is defined as the infimum of $\mathcal E^{\text{GP}}$ under normalization $\int_{\mathbb R^3}  |\phi(\bx)|^2 \mathrm{d}\bx=N$. It has been used for finding the ground state energy per unit volume of a dilute, thermodynamically infinite, homogeneous gas. In some typical experiments the value of $a$ is about $10^{-3}$, while $N$ varies from $10^3$ to $10^7$.

Computations are usually done on a finite domain and it is usually acceptable to make the assumption that we can approximate wave function of interest by compact support due to the fast decay rate at infinity \cites{gravejat2004decay, laire2022existence}. 
Let $\mathcal E_L^{\text{GP}}$ denote the energy functional defined  on a box domain $\Omega=[-L,L]^d$ and 
$E^{GP}_L(N, a)$ denote the corresponding GP energy, then $\lim\limits_{L\to \infty} E^{GP}_L(N, a)=E^{GP}(N, a)$, see \cite{lieb2001bosons}.
Since the GP energy satisfies the scaling relation $E^{GP}(N, a)=NE^{GP}(1, Na)$,   we consider the following simplified rescaled model for finding ground state: minimizing the energy functional 
\[ E(\phi)=\frac12 \int_\Omega \left(|\nabla \phi(\bx)|^2 +V(\bx) |\phi(\bx)|^2 \right) \mathrm{d}\bx+\frac{\beta}{4} \int_\Omega | \phi(\bx) |^4 \mathrm{d}\bx,\quad \Omega=[-L,L]^d, \]
over the constraint set $\left\{\phi\in H_0^1(\Omega): \int_{ \Omega}  |\phi(\bx)|^2 \mathrm{d}\bx=1\right\}$, where  $d=1,2,3$,  $V(\bx)\geq 0$ and $\beta>0$. While it is also of interest to extend the problem to $\beta < 0$, see e.g., \cite{antoine2013computational}, we restrict to the case  $\beta>0$ in this work.
	
The existence, uniqueness, and regularity of the GP ground states are well understood, see e.g., \cite{lieb2001bosons}. For $\beta>0$, $E(\phi)$ has the unique positive ground state $\phi(\bx)>0$, which is also the eigenfunction to the nonlinear eigenvalue problem 
\begin{equation}
	-\Delta u(\bx)+V(\bx)u(\bx)+\beta|u(\bx)|^2 u(\bx)=\lambda u(\bx), \quad  \int_{ \Omega}  |u(\bx)|^2 \mathrm{d}\bx=1, u(\bx)|_{\partial \Omega}=0.
	\label{continuum}
\end{equation}  
Notice that \eqref{continuum} should be understood in the sense of distribution, i.e., the variational form of \eqref{continuum} is to seek $\lambda \in \mathbb R$ and $u\in H_0^1(\Omega)$ satisfying
\begin{equation}
	( \nabla u,\nabla  v)+(V u, v)+\beta(|u|^2 u, v)=\lambda (u, v),\quad\forall ~v\in H_0^1(\Omega),
	\label{variational}
\end{equation}
where $(u, v)=\int_\Omega u(\bx) v(\bx) \mathrm{d}\bx$. Let $u^*$ be the ground state to $E(\cdot)$, then by setting $u=v=u^*$ in \eqref{variational}, the corresponding eigenvalue should satisfy
\[\lambda^*=2E(u^*)+\frac{\beta}{2}\bar \rho,\quad \bar\rho =\int_{ \Omega}  |\phi(\bx)|^4 \mathrm{d}\bx.\]
Since the ground state $u^*$ remains unchanged under a constant shift of the potential, without loss of generality, we may assume $V(\bx)\geq c>0$ for some $c>0$.  
 
\subsection{Related work}

The study of numerical solutions to the Gross-Pitaevskii problem \eqref{variational} has a long history. Self-consistent field iteration (SCF) \cites{defranceschi2000scf,cances2000convergence,cances2000can,upadhyaya2018density} is one of the most popular iterative techniques for a nonlinear eigenvalue problem, which involves a linearized eigenvalue problem during each iteration. For the problem \eqref{continuum}, SCF may diverge unless a good initial guess is provided. 
 
Another category of popular methods takes an optimization perspective of the energy functional. They can be viewed as discrete-in-time gradient flows (i.e., gradient descent) of the energy functional linked to \eqref{variational}. Earlier works in this category are based on an implicit Euler discretization of the $L^2$-gradient flow \cites{bao2004computing,bao2003numerical,bao2003ground}. More recently, several alternative gradient flows have been proposed by modifying the underlying metric, including the projected Sobolev gradient flow \cites{danaila2010new, kazemi2010minimizing, danaila2017computation, zhang2019exponential, henning2020sobolev, heid2021gradient,chen2023convergence} and the J-method \cites{jarlebring2014inverse, altmann2021j}. Projected Sobolev gradient flow is based on first computing the Sobolev gradients, which are the Riesz representation of the Fr\'echet derivative of the GP energy functional within an appropriate Hilbert space (e.g., $H^1(\Omega)$), and then projecting the gradients to the tangent space of the Riemannian manifold defined by the normalization constraint. Despite the empirical success of projected Sobolev gradient flows for solving the GP eigenvalue problem, their convergence analysis is still underdeveloped. Our work fits in this line of research. 

For the existing convergence results for gradient-flow-based methods: The work \cite{kazemi2010minimizing} established the global exponential convergence of the continuous-in-time projected $H^1$-gradient flow to a critical point of $E$. The work \cite{henning2020sobolev} obtained a global exponential convergence of a continuous projected Sobolev flow with an alternative metric to the ground state and also proved the global convergence (without a rate) of its forward Euler discretization. A more recent work \cite{zhang2019exponential} established a local exponential convergence of the discrete-in-time flow of \cite{henning2020sobolev} under the assumption that the discrete iterates are uniformly bounded. A more explicit local convergence rate depending on the eigengap of the linearized problem at the groundstate is obtained in \cite{henning2023dependency}. In our previous work \cite{chen2023convergence}, we improved the analysis of the global convergence and local rate of convergence of discrete-in-time projected Sobolev gradient flows with several common choices of inner products in $H^1$-space. 
 
In addition to the time-discretization of the projected Sobolev gradient flows, their spatial discretization \cite{bao2004computing} is of course necessary for the practical implementation of the schemes. However, most of the prior theoretical work on projected Sobolev flows for the GP eigenvalue problem do not consider spatial discretization and it remains open how to extend the convergence analysis to the fully discretized setting. 
The convergence of the numerical solution using finite element method has been first established in \cite{cances2010numerical},
and there is also some recent progress on estimating the discretization error for energy, eigenvalue, and eigenfunction in the setting of mixed finite element method  \cite{gallistl2024mixed}, but the convergence of the fully discretized gradient flows has not been analyzed before the initial submission of this work. 
After the initial submission of this work, some very recent progress was reported in \cite{hauck2024positivity}, which extended the results in \cite{henning2020sobolev} to  the fully discretized $A_u$ Sobolev gradient descent with the monotone $P^1$ finite element method. Compared to \cite{hauck2024positivity}, we consider the fully discretized $H^1$ Sobolev gradient descent and we also prove locally exponential convergence rate that is not covered in \cite{hauck2024positivity}.

Let us also mention some works on numerical analysis for general nonlinear eigenvalue problems, where $\frac{\beta}{4} \int_\Omega | \phi(\bx) |^4 \mathrm{d}\bx$ in the energy functional of the GP eigenvalue problem is generalized to $\frac{1}{2}\int_\Omega F(|\phi(\bx)|^2)\mathrm{d}\bx$; we refer interested readers to \cites{cances2010numerical,cances2018two,dusson2023overview}.

\subsection{Contribution of the present work}
We summarize our major contribution as follows. 
\begin{itemize}
    \item We propose a fully discretized Sobolev gradient descent for approximating the ground state of the GP energy, which can be viewed as a Riemannian gradient descent method on the sphere under a metric induced by a modified $H^1$-norm. 
    \item We prove the global convergence of the fully discretized Sobolev gradient descent with respect to the modified $H^1$ metric to a critical point of the discrete GP energy and a local convergence to the ground state with an exponential rate. See Corollary~\ref{cor:global_converge} and  Theorem~\ref{thm:local_converge}. We   prove the convergence of the ground eigenpair of the discrete GP energy to those of the continuous counterpart as well as a positive discrete eigengap as the mesh size diminishes for $P^1$ finite element method with quadrature on unstructured shape regular simplicial meshes under the classical mesh constraint for monotonicity, which includes the  second order finite difference scheme; see Theorem~\ref{thm:consist} and Theorem~\ref{thm:positive_eigengap}. 
    \item We provide numerical experiments with $Q^k$ spectral element method as spatial discretization to verify the accuracy and efficiency of the proposed approach for solving  GP problems in both two and three dimensions. Due to the fact that only Laplacian needs to be inverted in the algorithm, the scheme on a structured mesh can be easily implemented and efficiently accelerated on modern GPUs.
\end{itemize}

\subsection{Organization of the rest of the paper} 
As preliminaries, we first discuss the spatial discretization of the GP eigenvalue problem based on finite element method in   Section \ref{sec-fem}, then review some useful properties of the discrete energy in Section \ref{sec:discretep}. In particular, the second order finite element method on a uniform gives the most popular second-order finite difference scheme. The fully discretized Sobolev gradient descent methods are given in Section \ref{sec:discretescheme}. We present the two main convergence results in Section \ref{sec:convergence}, with numerical experiments given in Section \ref{sec-tests}. Further preliminary results and proof details can be found in the Appendix. Concluding remarks are given in Section \ref{sec:remarks}.

\section{The classical finite element method with quadrature}
\label{sec-fem}
We consider the classical continuous finite element method with quadrature using $P^k$ basis on a simplicial mesh or $Q^k$ basis on a rectangular mesh. This section briefly reviews its definition. If the fnite element method is defined on a uniform structured mesh,  it is well known that both the $P^1$ scheme and the $Q^1$ scheme are equivalent to the second-order finite difference scheme.  
	
\subsection{Finite element Galerkin method}
\label{sec-derivation}
	
We first consider a uniform rectangular mesh $\Omega_h$ for the rectangular domain $\Omega$. For any rectangle $e$ in the mesh $\Omega_h$, let $Q^k$ be the space of tensor product polynomials of degree $k$:
$$Q^k(e)=\Bigg\{ p(\bx)=\sum\limits_{i_1,i_2,\dots,i_d=0}^k p_{i_1 i_2\cdots i_d}x_1^{i_1} x_2^{i_2}\cdots x_d^{i_d},\ \bx = (x_1,x_2,\dots, x_d)\in e \Bigg\}.$$ 
Let $V^h_0\subset H_0^1(\Omega)$ be  continuous piecewise $Q^k$ polynomial space  with zero boundary:
\[V^h_0=\{ v_h(\bx) \in C(\Omega): v_h(\bx)|_{\partial \Omega}=0, \ v_h \big|_e\in Q^k(e),\ \forall~e \in \Omega_h\}\subset H_0^1(\Omega).\] We also consider an unstructured simplicial mesh  $\Omega_h$ with $e$ denoting a simplex in $\Omega_h$, e.g., a triangular mesh in two dimensions with $e$ denoting a triangle. Let $P^k$ be the space of polynomials of degree $k$:
\[P^k(e)=\Bigg\{ p(\bx)=\sum\limits_{  i_1+i_2+\dots+i_d\leq k} p_{i_1 i_2\cdots i_d}x_1^{i_1} x_2^{i_2}\cdots x_d^{i_d},\ \bx = (x_1,x_2,\dots, x_d)\in e \Bigg\},\]
and $V^h_0\subset H_0^1(\Omega)$ be  continuous piecewise $P^k$ polynomial space  with zero boundary:
\[V^h_0=\{ v_h(\bx) \in C(\Omega): v_h(\bx)|_{\partial \Omega}=0, \ v_h \big|_e\in P^k(e),\ \forall~e \in \Omega_h\}\subset H_0^1(\Omega).\]

The finite element Galerkin method for \eqref{continuum} is to seek $\lambda_h \in\mathbb R$ and $u_h\in V_0^h$ satisfying
\begin{equation}
	( \nabla u_h,\nabla  v_h)+(V u_h, v_h)+\beta(|u_h|^2 u_h, v_h)=\lambda_h (u_h, v_h),\quad\forall~v_h\in V^h_0.
	\label{fem}
\end{equation} 
The corresponding discrete energy can be given as
\begin{equation*}
    E(u_h)  =\frac12(\nabla u_h, \nabla u_h)+\frac12(V u_h, u_h)+\frac{\beta}{4}(u_h^2, u_h^2).
\end{equation*}

The convergence of the finite element method for the nonlinear eigenvalue problem \eqref{continuum} was discussed in \cite{cances2010numerical}.
The standard {\it a priori} error estimates for a linear eigenvalue problem, e.g., $\beta=0$ in \eqref{fem} is $2k$-order for eigenvalues and $k$-order for eigenvector in $H^1$-norm, under suitable regularity assumptions. See \cites{ciarlet1968numerical, babuvska1987estimates, babuvska1989finite, BABUSKA1991641, knyazev2006new} for discussions on the rate of convergence of numerical schemes for eigenvalue problems.

\subsection{Finite element method with quadrature}
In practice, one often uses quadrature for integrals to implement the finite element method. The $Q^k$ spectral element method is to replace all integrals in \eqref{fem} by $(k+1)$-point Gauss-Lobatto quadrature in each dimension. Standard {\it a priori} finite element method error estimates still hold, see \cite{ciarletbook} and references therein. 
For the $P^k$ finite element method, suitable quadrature on a simplex can be used, e.g., the simplest quadrature using the average of values at vertices can be used for the $P^1$ finite element method.
Let $\langle\cdot,\cdot\rangle$ denote that integrals are replaced by   quadrature, then the method is to find $u_h\in V^h$ satisfying
\begin{equation}
	\langle \nabla u_h,\nabla  v_h\rangle+\langle V u_h, v_h\rangle+\beta \langle |u_h|^2 u_h, v_h\rangle=\lambda_h \langle u_h, v_h\rangle,\quad\forall~v_h\in V^h_0.
	\label{fem2}
\end{equation}
The corresponding discrete energy is given as
\begin{equation} 
    E_h(u_h)=\frac12 \langle \nabla u_h, \nabla u_h\rangle+\frac12 \langle V u_h, u_h\rangle+\frac{\beta}{4}\langle u_h^2, u_h^2\rangle. \label{fem-energy}
\end{equation}

\subsection{The matrix-vector form}

For either $Q^k$ or $P^k$ finite element method, we will use a quadrature rule such that a matrix vector form of the scheme can be easily written.

We first describe the $Q^k$ finite element method on a uniform rectangular mesh.
Assume that $\Omega_h$ consists of uniform $N_c^d$ cubic cells for the cubic domain $\Omega=[-L, L]^d$. Then there are in total $(N_ck+1)^d$  Gauss-Lobatto points. Any $Q^k$ polynomial on a cubic element $e$ can be represented as a Lagrangian interpolation polynomial at $(k+1)^d$ Gauss-Lobatto points, thus the $Q^k$ spectral element method \eqref{fem2} also becomes a finite difference scheme on all Gauss-Lobatto nodes. For $Q^1$ and $Q^2$ bases, all the Gauss-Lobatto points form a uniform grid.  For $k\geq 3$, the Gauss-Lobatto points are not uniform in each element. 	
For homogeneous Dirichlet boundary condition, the boundary points are not unknowns. Thus the total number of unknowns is the interior grid points with the number $N=n^d$ where $n=N_ck-1$. To derive an equivalent matrix form of the scheme \eqref{fem2}, let $\phi_i(\bx)\in V_0^h$ ($i=1,\cdots,N$) be the continuous piecewise $Q^k$ Lagrangian basis at all Gauss-Lobatto points $\bx_i$ ($i=1,\cdots,N$) in the interior of $\Omega_h$. For any piecewise polynomial $u_h(\bx)\in V^h_0$, let $u_i=u_h(\bx_i)$. Then $u_h(\bx)=\sum_{i=1}^N u_i\phi_i(\bx)$. Let $\mathbf u=\begin{bmatrix} u_1& \cdots &u_N \end{bmatrix}^\top$ and $w_i$ be the quadrature weight at $\bx_i$. 

Next we consider the $P^k$ finite element method on a simplicial mesh. The number of degree of freedoms of $P^k$ polynomial on a $d$-dimensional simplex is $\frac{(k+d)!}{k!d!}$.  Consider a $(k+1)$-th order accurate quadrature rule on a simplex using $\frac{(k+d)!}{k!d!}$ quadrature points, e.g., the quadrature rule using $3$ vertices for $P^1$ on a triangle, the quadrature rule using $6$ vertices for $P^1$ on a tetrahedron and the quadrature rule using $3$ vertices and $3$ edge centers for $P^2$ on a triangle. Assume there are in total $N$ quadrature points which lie in the interior of the domain $\Omega_h$. Let $\bx_i$ ($i=1,\cdots,N$) be all interior quadrature points with $w_i$ being the quadrature weight at $\bx_i$.
Let $\phi_i(\bx)\in V_0^h$ ($i=1,\cdots,N$) be the continuous piecewise $P^k$ Lagrangian basis at all interior quadrature points $\bx_i$ ($i=1,\cdots,N$). For any piecewise polynomial $u_h(\bx)\in V^h_0$, let $u_i=u_h(\bx_i)$. Then $u_h(\bx)=\sum_{i=1}^N u_i\phi_i(\bx)$. Let $\mathbf u=\begin{bmatrix} u_1& \cdots &u_N \end{bmatrix}^\top$.

With the notation above, we have
\begin{equation} \label{part1}
	\langle V u_h, v_h\rangle =\sum_{i=1}^N w_i V_i u_i v_i =\mathbf v^\top \mathbb M   \mathbb V \mathbf u,
\end{equation}
where $\mathbb M=\text{diag}\{w_1, \cdots ,w_N\}$ and $\mathbb  V=\text{diag}\{V_1,\cdots,   V_N\}$ are diagonal matrices and $V_i=V(\bx_i)$. We also have
\begin{equation} \label{part2}
	\langle  \nabla  u_h,\nabla  v_h\rangle= \mathbf v^\top \mathbb S \mathbf u,
\end{equation}
where $\mathbb S$ is the stiffness matrix given by $\mathbb S_{ij}=\langle \nabla \phi_i, \nabla \phi_j \rangle$.

 Using \eqref{part1} and \eqref{part2}, the matrix form of (\ref{fem2}) is to find $\mathbf u \in \mathbb R^N$ satisfying
\begin{equation*}
	\mathbf v^\top \mathbb S \mathbf u+\mathbf v^\top \mathbb M   \mathbb V \mathbf u+\beta \mathbf v^\top \mathbb M  \mathbf u^3=\lambda_h \mathbf v^\top \mathbb M   \mathbf u,\quad\forall~\mathbf v\in \mathbb R^N,
\end{equation*}
or equivalently
\begin{equation}
	\mathbb S \mathbf u+ \mathbb M   \mathbb V \mathbf u+\beta  \mathbb M  \mathbf u^3=\lambda_h   \mathbb M   \mathbf u,
	\label{fd3}
\end{equation} 
where $\mathbf u^3=\begin{bmatrix} u_1^3 & \cdots & u_N^3 \end{bmatrix}^\top$. Let $\Delta_h=-\mathbb M^{-1}\mathbb S$, then  \eqref{fd3} can also be written as
\begin{equation}
    -\Delta_h \mathbf u+    \mathbb V \mathbf u+\beta  \mathbf u^3=\lambda_h   \mathbf u.
    \label{fd2}
\end{equation}
In the formulation \eqref{fd3}, all the matrices are symmetric positive definite, but the discrete Laplacian $\Delta_h=-\mathbb M^{-1}\mathbb S$ is in general not symmetric in \eqref{fd2} except the special case of second order finite difference, which however does not affect numerical implementations since the symmetric form \eqref{fd3} should be implemented instead of the form \eqref{fd2}.
	
\subsection{The discrete energy and discrete $L^2$ norm}
	
Using the same notation, the discrete energy \eqref{fem-energy} can be written as
\begin{equation}
	E_h(u_h)=\frac12 \mathbf u^\top \mathbb S \mathbf u+\frac12 \mathbf u^\top \mathbb M \mathbb V \mathbf u+\frac{\beta}{4} (\mathbf u^2)^\top \mathbb M  \mathbf u^2.
	\label{fem_matrix-energy}
\end{equation}
Introduce the interpolation operator 
\begin{equation}
	\label{interpolation}
	\Pi: \mathbb R^N\longrightarrow V_0^h,\quad\mathbf v \longmapsto \sum_{i=1}^N v_i \phi_i(\bx).
\end{equation}
The discrete integration by parts is ensured in the following sense:
\begin{equation}
	\langle \nabla u_h, \nabla v_h\rangle=\mathbf v^\top \mathbb S \mathbf u=\mathbf v^\top \mathbb M (-\Delta_h) \mathbf u=\langle  -\Pi[\Delta_h \mathbf u],  v_h\rangle. 
    \label{integrationbyparts}
\end{equation} 
For two vectors $\mathbf u, \mathbf v\in\mathbb R^N$, we define the discrete $L^2$ inner product $\langle \mathbf u, \mathbf v\rangle_h$ by setting  
\begin{equation}
	\label{discreteL2norm}
	\langle \mathbf u, \mathbf v\rangle_h :=\mathbf u^\top \mathbb M\mathbf v.
\end{equation}
Thus the discrete energy \eqref{fem_matrix-energy} and \eqref{fem-energy} can also be written in matrix form 
\begin{equation*}
	E_h(u_h) = \mathbf E_h(\mathbf u) = \frac12 \langle-\Delta_h \mathbf u,  \mathbf u\rangle_h+\frac12  \langle\mathbb V \mathbf u,  \mathbf u\rangle_h+\frac{\beta}{4}  \langle \mathbf u^2,  \mathbf u^2\rangle_h , \label{fd-energy} 
\end{equation*}
with normalization constraint $\langle \mathbf u, \mathbf u\rangle_h=1$.

\section{Properties of the discrete energy}
\label{sec:discretep}

We discuss some properties of the discrete energy $E_h(u_h) = \mathbf E_h(\mathbf u)$ in this section.
	
\subsection{The monotonicity of the discrete Laplacian}
\label{sec:monotonicity}	
A matrix $A\in \mathbb R^{n\times n}$ is called {\it monotone} if its inverse has nonnegative entries $A^{-1}\geq 0$. At a fixed vector $\mathbf u$, the linearized operator for \eqref{fd2} is given by 
\begin{equation}\label{eq:A_u}
	A_{\mathbf u}=-\Delta_h+\mathbb V+\beta \diag(\mathbf u^2).
\end{equation}
The matrix $A_{\mathbf u}$ is {\it irreducible}, which can be easily verified by the graph that the discrete Laplacian represents, see \cite{li2019monotonicity}. The definition of irreducible matrices is given in Appendix \ref{sec-PF}. By the Perron Frobenius Theorem in Appendix \ref{sec-PF}, if $A_{\mathbf u}$ is also monotone, then its smallest eigenvalue has multiplicity one, with a unique unit positive eigenvector. For the $Q^1$ finite element scheme on a uniform rectangular mesh (or equivalently the second order finite difference scheme) or the $P^1$ finite element method on a simplical mesh under suitable constraints, is used, it is straightforward to verify that $A_{\mathbf u}$  satisfies Theorem \ref{rowsumcondition-thm}, implying that  $A_{\mathbf u}$ is an M-matrix thus monotone, which is a well-known result in the literature; see Appendix \ref{sec-Mmatrix}. 
The explicit expressions of the second-order finite difference and $P^1$ finite element method on an unstructured mesh are given in Appendix \ref{apx:2nd_finite_diff_explicit}.

For a simplex $T$ in a simplicial mesh $\Omega_h\subset\mathbb R^d$ of dimension $d$, let $\kappa^T_E$ be the $(d-2)$-dimensional simplex opposite to the edge $E$ in the simplex $T$ and $\theta_E^T$ be the angle between the two faces containing the edge $E$ in the simplex $T$. By {xu1999monotone}*{Lemma 2.1}, the simplicial mesh constraint for monotonicity is 
\begin{equation}
    \label{simpicialmesh}
    \sum_{T\supset E} \frac{1}{d(d-1)}|\kappa_E^T|\cot \theta_E^T\geq 0,
\end{equation}
 where $T\supset E$ means summation over all simplexes $T$ containing the edge $E$. Such a constraint reduces to a Delaunay triangular mesh in two dimensions (see Appendix \ref{appendix-P1FEM}), which is more general and more practical than a non-obtuse
triangulation.
We summarize the results as the following theorem:
\begin{theorem}\label{thm:2nd-monotone}
    For the $P^1$ finite element scheme with quadrature on a simplicial mesh satisfying \eqref{simpicialmesh},  which includes the classical second order finite difference scheme, 
 $A_{\mathbf u}=-\Delta_h+\mathbb V+\beta \diag(\mathbf u^2)$ is an M-matrix thus monotone. As a result, it has a unique positive unit eigenvector and the corresponding eigenvalue is simple and the smallest eigenvalue of $A_{\mathbf u}$.
\end{theorem}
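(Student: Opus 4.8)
The plan is to establish the M-matrix property by verifying that $A_{\mathbf u}$ meets the hypotheses of the row-sum criterion stated in Theorem~\ref{rowsumcondition-thm}, and then to read off the spectral conclusions from the Perron--Frobenius theory collected in Appendix~\ref{sec-PF}. Since $A_{\mathbf u}=-\Delta_h+\mathbb V+\beta\diag(\mathbf u^2)$ differs from $-\Delta_h=\mathbb M^{-1}\mathbb S$ only by the nonnegative diagonal matrices $\mathbb V$ and $\beta\diag(\mathbf u^2)$, all the sign and dominance properties reduce to properties of the stiffness matrix $\mathbb S$, and the two diagonal perturbations can only help.

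First I would verify that $A_{\mathbf u}$ is a Z-matrix, i.e.\ that its off-diagonal entries are nonpositive. For $i\neq j$ one has $(A_{\mathbf u})_{ij}=w_i^{-1}\mathbb S_{ij}$, and by \cite{xu1999monotone}*{Lemma 2.1} the mesh condition \eqref{simpicialmesh} is precisely the statement that $\mathbb S_{ij}=-\sum_{T\supset E}\frac{1}{d(d-1)}|\kappa_E^T|\cot\theta_E^T\le 0$ for the edge $E$ joining nodes $i$ and $j$. Because $w_i>0$ the sign is preserved, and the diagonal perturbations $\mathbb V$ and $\beta\diag(\mathbf u^2)$ leave the off-diagonal entries untouched, so every off-diagonal entry of $A_{\mathbf u}$ is nonpositive.

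Next I would check the diagonal dominance demanded by Theorem~\ref{rowsumcondition-thm}. Assembled over the whole mesh the stiffness matrix annihilates constants, so the full row sums vanish; restricting to the interior unknowns, the $i$-th row sum of $\mathbb S$ equals $-\sum_{j\in\partial\Omega_h}\mathbb S_{ij}\ge 0$, with strict inequality for nodes adjacent to the boundary. Adding the strictly positive potential (recall the normalization $V\ge c>0$) makes every diagonal entry strictly exceed the sum of the moduli of the off-diagonal entries in its row. Together with the irreducibility of $A_{\mathbf u}$ recorded after \eqref{eq:A_u}, this verifies the hypotheses of Theorem~\ref{rowsumcondition-thm}, so $A_{\mathbf u}$ is a nonsingular M-matrix, and in particular monotone with $A_{\mathbf u}^{-1}\ge 0$.

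Finally I would deduce the spectral statement. Choosing $s$ large enough that $B:=sI-A_{\mathbf u}\ge 0$, the matrix $B$ is nonnegative and irreducible, so Perron--Frobenius (Appendix~\ref{sec-PF}) yields a simple dominant eigenvalue $\rho(B)$ with a positive eigenvector $\mathbf v>0$, and no other eigenvector of $B$ may be taken positive. Translating back, $s-\rho(B)$ is a simple eigenvalue of $A_{\mathbf u}$ with positive eigenvector $\mathbf v$; since $A_{\mathbf u}$ is similar to the symmetric matrix $\mathbb M^{-1/2}\mathbb S\,\mathbb M^{-1/2}+\mathbb V+\beta\diag(\mathbf u^2)$ its spectrum is real, and $s-\rho(B)$ is therefore its smallest eigenvalue, while normalizing $\mathbf v$ gives the unique positive unit eigenvector. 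The only genuinely geometric step is the first one, namely confirming that the cotangent-type inequality \eqref{simpicialmesh} forces $\mathbb S_{ij}\le 0$; once the Z-matrix structure is in place, the strict positivity of $V$ renders the dominance, and hence the M-matrix property, essentially immediate.
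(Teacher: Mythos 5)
Your proposal is correct and follows essentially the same route as the paper: the mesh condition \eqref{simpicialmesh} gives the nonpositive off-diagonal entries of $\mathbb S$, the strictly positive potential supplies the (strict) diagonal dominance needed for Theorem~\ref{rowsumcondition-thm}, and irreducibility plus Perron--Frobenius yields the simple smallest eigenvalue with a unique positive unit eigenvector. The only cosmetic difference is that you apply Perron--Frobenius to the shift $sI-A_{\mathbf u}\ge 0$, whereas the paper applies it to the nonnegative inverse $A_{\mathbf u}^{-1}$; both are standard and equivalent.
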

	
For the high-order accurate discrete Laplacian, the matrix $-\Delta_h+\mathbb V$ is no longer an M-matrix. It is proven in \cite{li2019monotonicity} that the fourth-order accurate Laplacian of $Q^2$ scheme in two dimensions are products of M-matrices thus still monotone under certain mesh size constraints. It is possible to prove similar results for the three-dimensional case following the same arguments in \cite{li2019monotonicity}. Extensions to quasi-uniform meshes are given in \cite{cross-Q2}. It is also possible to extend the monotonicity to $Q^3$ element \cite{cross2020monotonicity}. All these monotonicity results for high-order schemes hold under mesh size constraints, which makes further discussion of global convergence much more complicated. Thus we only discuss the global convergence for the second-order scheme.

\subsection{Ground state of the discrete energy}
	
In this subsection, we only focus on $P^1$ finite element scheme on a simplicial mesh satisfying  \eqref{simpicialmesh}, which includes the second order finite difference scheme. In general, it is difficult to extend all results about the discrete energy \eqref{fd-energy} to high-order schemes.

\begin{theorem}
    \label{theorem-discrete-energy}
    For the $P^1$ finite element method with quadrature weights $w_i$ on a simplicial mesh satisfying \eqref{simpicialmesh}, for any $ v_h\in V_0^h$ satisfying $\sum_{i=1}^N w_i v_i=1$ and $v_i\geq 0,\ \forall~i$, where $v_i = v_h({\bf x}_i)$, $\mathbf E_h(\sqrt{\mathbf v})$ is strongly convex w.r.t. the vector $\mathbf v =\begin{bmatrix} v_1 & \cdots & v_N \end{bmatrix}^\top $.
\end{theorem}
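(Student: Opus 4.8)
The plan is to substitute $u_i=\sqrt{v_i}$ directly into the matrix form of the discrete energy \eqref{fem_matrix-energy} and read off the convexity of each of the three resulting contributions as a function of $\mathbf v$. Writing $\mathbb S=\mathbb M(-\Delta_h)$ for the stiffness matrix and using $\mathbf u^2=\mathbf v$, one gets
\[
\mathbf E_h(\sqrt{\mathbf v})=\frac12\sum_{i,j}\mathbb S_{ij}\sqrt{v_i v_j}+\frac12\sum_i w_i V_i v_i+\frac{\beta}{4}\sum_i w_i v_i^2.
\]
The middle (potential) term is linear in $\mathbf v$ and contributes nothing to the Hessian. The last (interaction) term is a positive-definite quadratic in $\mathbf v$ with Hessian $\frac{\beta}{2}\mathbb M\succeq\frac{\beta}{2}(\min_i w_i)\,I\succ0$; this is exactly where the strong convexity will come from, since $\beta>0$ and all $w_i>0$.

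The remaining work is to show that the kinetic term $T_1(\mathbf v):=\frac12\sum_{i,j}\mathbb S_{ij}\sqrt{v_i v_j}$ is (merely) convex on the nonnegative orthant. I would split it into its diagonal part $\frac12\sum_i\mathbb S_{ii}v_i$, which is linear in $\mathbf v$, and its off-diagonal part $\frac12\sum_{i\neq j}\mathbb S_{ij}\sqrt{v_i v_j}$. By Theorem~\ref{thm:2nd-monotone}, $A_{\mathbf u}=-\Delta_h+\mathbb V+\beta\diag(\mathbf u^2)$ is an M-matrix, so its off-diagonal entries are nonpositive; since $\mathbb V$ and $\diag(\mathbf u^2)$ are diagonal and $\mathbb M^{-1}$ has positive diagonal, the relation $(-\Delta_h)_{ij}=w_i^{-1}\mathbb S_{ij}\le0$ forces $\mathbb S_{ij}\leq0$ for every $i\neq j$. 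The key elementary fact is then that the geometric mean $(v_i,v_j)\mapsto\sqrt{v_i v_j}$ is concave on $\mathbb R_{\geq0}^2$ (its $2\times2$ Hessian has nonpositive trace and vanishing determinant, hence is negative semidefinite). Consequently each off-diagonal summand $\mathbb S_{ij}\sqrt{v_i v_j}=|\mathbb S_{ij}|\,(-\sqrt{v_i v_j})$ is a nonnegative multiple of a convex function, and the sum over $i\neq j$ is convex, so $T_1$ is convex.

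Combining the three pieces, $\mathbf E_h(\sqrt{\mathbf v})$ is the sum of a convex function (kinetic), a linear function (potential), and a uniformly strongly convex quadratic (interaction), and is therefore strongly convex; explicitly, $\mathbf E_h(\sqrt{\mathbf v})-\tfrac{\mu}{2}\norm{\mathbf v}^2$ remains convex for any $\mu\leq\tfrac{\beta}{2}\min_i w_i$, with convexity preserved under the affine restriction to $\{\mathbf v\geq0:\sum_i w_i v_i=1\}$. The main obstacle I anticipate is not the algebra but the regularity at the boundary of the orthant: where some $v_i=0$ the geometric-mean terms are not twice differentiable, so I would establish convexity of $-\sqrt{v_i v_j}$ on the \emph{closed} orthant directly (via the midpoint/AM--GM inequality, or by taking limits of the interior Hessian) rather than purely through the Hessian, and only then assemble the strong-convexity estimate above.
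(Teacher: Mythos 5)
Your proof is correct and is essentially the paper's argument: both substitute $u_i=\sqrt{v_i}$, note the potential term is linear and the interaction term is a strongly convex quadratic, and reduce the kinetic term to the nonpositivity of the off-diagonal stiffness entries (which is exactly the mesh condition \eqref{simpicialmesh}) combined with a two-variable convexity fact. Your split into diagonal plus off-diagonal parts of $\mathbb S$ with concavity of $\sqrt{v_iv_j}$ is algebraically identical to the paper's edge-sum formula \eqref{discretegradient-P1FEM} with convexity of $(\sqrt{v_i}-\sqrt{v_j})^2$, since $(\sqrt{v_i}-\sqrt{v_j})^2=v_i+v_j-2\sqrt{v_iv_j}$; your extra care about non-differentiability on the boundary of the orthant is a welcome refinement the paper elides.
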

	
\begin{proof}
    Let $u_h\in V_0^h$ satisfy $u_h({\bf x}_i)=\sqrt{v_i}=\sqrt{v_h({\bf x}_i)}$. By \eqref{fem-energy}, we have
    \[\mathbf E_h(\sqrt{\mathbf v})=\frac{1}{2}\langle \nabla u_h, \nabla u_h\rangle+\frac{1}{2}\langle V u_h, u_h\rangle+\frac{\beta}{4}\langle u_h^2, u_h^2\rangle.\]

The quadrature yields that $\frac{\beta}{4}\langle u_h^2, u_h^2\rangle=\frac{\beta}{4}   \sum_i w_i v_i^2$ is quadratic and strongly convex in $\mathbf v$ and that $\langle V u_h, u_h\rangle=\sum_i w_i V_i v_i$ is linear in $\mathbf v$.
      By \eqref{discretegradient-P1FEM},   we have
      \[\langle \nabla u_h, \nabla u_h\rangle=\sum\limits_E\left(\sum_{T\supset E} \frac{1}{d(d-1)}|\kappa_E^T|\cot \theta_E^T \right) |\sqrt{v_i}-\sqrt{v_j}|^2, \quad \mbox{the edge $E$ connects $\bx_i$ and $\bx_j$.}\]
       With \eqref{simpicialmesh}, the convexity of the term $\langle \nabla u_h, \nabla u_h\rangle$ is induced by the convexity of the bivariate function $f(x,y)=|\sqrt{x}-\sqrt{y}|^2$, which is easy to verify.
\end{proof}

\begin{theorem}
	\label{theorem-discrete-energy-2}
	For the $P^1$ finite element method with quadrature on a simplicial mesh satisfying \eqref{simpicialmesh},  $\forall u_h\in V_0^h$, $\mathbf E_h(\mathbf u)\geq \mathbf E_h(|\mathbf u|)$.
\end{theorem}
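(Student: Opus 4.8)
The plan is to show that replacing $u_h$ by $|u_h|$ (meaning the finite element function interpolating $|u_i|$ at the nodes) does not increase any of the three terms in the discrete energy $\mathbf E_h(\mathbf u)$ given in \eqref{fem_matrix-energy}. Since $\mathbf E_h$ depends on $\mathbf u$ through the three pieces $\frac12\mathbf u^\top\mathbb S\mathbf u$, $\frac12\mathbf u^\top\mathbb M\mathbb V\mathbf u$, and $\frac\beta4(\mathbf u^2)^\top\mathbb M\mathbf u^2$, and since passing from $\mathbf u$ to $|\mathbf u|$ replaces $u_i$ by $|u_i|$, the latter two terms are manifestly unchanged: they depend only on $u_i^2=|u_i|^2$ through the diagonal matrices $\mathbb M$ and $\mathbb V$. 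So the entire statement reduces to the single inequality for the stiffness (gradient) term, namely $\langle\nabla u_h,\nabla u_h\rangle\ge\langle\nabla|u_h|,\nabla|u_h|\rangle$, where on the right $|u_h|$ is the $P^1$ interpolant of the nodal absolute values.

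First I would invoke the explicit edge-based formula for the gradient term that was already used in the proof of Theorem~\ref{theorem-discrete-energy}, namely
\begin{equation*}
\langle \nabla u_h, \nabla u_h\rangle=\sum_{E}\left(\sum_{T\supset E} \frac{1}{d(d-1)}|\kappa_E^T|\cot \theta_E^T \right)\,|u_i-u_j|^2,
\end{equation*}
where the edge $E$ connects the nodes $\bx_i$ and $\bx_j$, and referenced from \eqref{discretegradient-P1FEM}. The key observation is that the mesh constraint \eqref{simpicialmesh} guarantees that every edge weight $c_E:=\sum_{T\supset E}\frac{1}{d(d-1)}|\kappa_E^T|\cot\theta_E^T$ is nonnegative. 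Therefore the whole sum is a nonnegatively weighted combination of the squared edge differences, and it suffices to prove the pointwise inequality $|u_i-u_j|^2\ge\big||u_i|-|u_j|\big|^2$ for each edge. This is the elementary reverse triangle inequality $\big||u_i|-|u_j|\big|\le|u_i-u_j|$ applied termwise; summing against the nonnegative weights $c_E$ then gives $\langle\nabla u_h,\nabla u_h\rangle\ge\langle\nabla|u_h|,\nabla|u_h|\rangle$.

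Combining the three pieces, the two potential and quartic terms are exactly equal for $\mathbf u$ and $|\mathbf u|$, while the gradient term does not increase, so $\mathbf E_h(\mathbf u)\ge\mathbf E_h(|\mathbf u|)$, which is the claim. The main (and essentially only) conceptual point is the sign of the edge weights: the argument works precisely because \eqref{simpicialmesh} forces $c_E\ge0$, which is the same structural fact underlying monotonicity in Theorem~\ref{thm:2nd-monotone}. Without that nonnegativity the termwise reverse triangle inequality would no longer yield a global inequality, which is exactly why the statement is restricted to meshes satisfying \eqref{simpicialmesh}; I do not anticipate any genuine obstacle beyond carefully citing the edge-weight formula and the nonnegativity of the weights.
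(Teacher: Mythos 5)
Your proposal is correct and follows essentially the same route as the paper's proof: both reduce the claim to the gradient term via the edge-based identity \eqref{discretegradient-P1FEM}, use \eqref{simpicialmesh} to ensure the edge weights are nonnegative, and conclude from the elementary inequality $(x-y)^2\geq(|x|-|y|)^2$. Your write-up is just more explicit about why the potential and quartic terms are unchanged, which the paper leaves implicit.
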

	
\begin{proof}
	It suffices to verify that  $\langle \nabla  {u_h}, \nabla  {u_h}\rangle\geq \langle \nabla |u_h|, \nabla |u_h|\rangle$.
    By \eqref{discretegradient-P1FEM},   we have
      \[\langle \nabla u_h, \nabla u_h\rangle=\sum\limits_E\left(\sum_{T\supset E} \frac{1}{d(d-1)}|\kappa_E^T|\cot \theta_E^T \right) |u_i-u_j|^2, \quad \mbox{the edge $E$ connects $\bx_i$ and $\bx_j$.}\]
With \eqref{simpicialmesh}, it suffices to verify $(x-y)^2\geq (|x|-|y|)^2$, which is trivial.
\end{proof}

\begin{theorem}
	\label{theorem-discrete-energy-3}
    For the $P^1$ finite element method with quadrature on a simplicial mesh satisfying \eqref{simpicialmesh},
	the discrete energy $E_h(u_h)$ under the constraint $\langle u_h, u_h\rangle=1$   has a unique and positive minimizer $u_h^*$. Let $\mathbf u^*$ be the vector representing its point values $u_h^*(\bx_i)$, then $\mathbf u^*$ solves \eqref{fd2}, and $\mathbf u^*$ is the eigenvector associated to the smallest eigenvalue of the linear operator $A_{\mathbf u^*}=-\Delta_h+\mathbb V+\beta \diag({\mathbf u}^*)^2.$
\end{theorem}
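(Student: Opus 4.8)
The plan is to establish existence, uniqueness, positivity, and the eigenvalue characterization in sequence, leveraging the convexity result of Theorem~\ref{theorem-discrete-energy} and the symmetrization result of Theorem~\ref{theorem-discrete-energy-2}. First I would reduce the minimization over the sphere $\langle u_h,u_h\rangle=1$ to a minimization over nonnegative data. By Theorem~\ref{theorem-discrete-energy-2}, replacing $\mathbf u$ by $|\mathbf u|$ does not increase the energy while preserving the normalization constraint $\langle|\mathbf u|,|\mathbf u|\rangle=\langle\mathbf u,\mathbf u\rangle=1$, so any minimizer can be taken with nonnegative entries. This means it suffices to minimize $\mathbf E_h(\mathbf u)$ over $\mathbf u\geq 0$ with $\sum_i w_i u_i^2=1$.

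Next I would perform the change of variables $v_i=u_i^2$, i.e. $\mathbf u=\sqrt{\mathbf v}$, translating the problem into minimizing $\mathbf E_h(\sqrt{\mathbf v})$ over the set $\{\mathbf v: v_i\geq 0,\ \sum_i w_i v_i=1\}$. This constraint set is a (closed, bounded) simplex, hence compact and convex, so a minimizer exists by continuity of $\mathbf E_h(\sqrt{\mathbf v})$. By Theorem~\ref{theorem-discrete-energy}, the objective $\mathbf E_h(\sqrt{\mathbf v})$ is strongly convex on this affine slice, which immediately yields that the minimizer $\mathbf v^*$ is \emph{unique}. Uniqueness in $\mathbf v$ together with the nonnegativity reduction gives uniqueness of the nonnegative-valued minimizer $\mathbf u^*=\sqrt{\mathbf v^*}$; the sign ambiguity in taking square roots is resolved by the convention $u_i^*\geq 0$.

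To upgrade nonnegativity to strict positivity, I would argue that the minimizer $\mathbf u^*$, being a constrained critical point, satisfies the first-order optimality condition, which is exactly the discrete eigenvalue equation \eqref{fd2}: $A_{\mathbf u^*}\mathbf u^*=\lambda_h\mathbf u^*$ with $A_{\mathbf u^*}=-\Delta_h+\mathbb V+\beta\diag(\mathbf u^*)^2$. Since the mesh satisfies \eqref{simpicialmesh}, Theorem~\ref{thm:2nd-monotone} guarantees that $A_{\mathbf u^*}$ is a monotone, irreducible M-matrix, so by the Perron--Frobenius theory its smallest eigenvalue is simple with a \emph{strictly positive} eigenvector. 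The nonnegative minimizer $\mathbf u^*$ must coincide (up to normalization) with this Perron eigenvector: a nonnegative eigenvector of an irreducible M-matrix can only correspond to the Perron eigenvalue, and hence $\mathbf u^*>0$ entrywise and $\lambda_h$ is the smallest eigenvalue of $A_{\mathbf u^*}$. This simultaneously delivers positivity and the eigenvalue characterization in the statement.

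The main obstacle is the logical coupling between the two uniqueness claims, namely that the minimizer of the \emph{original} energy on the sphere is unique, not merely that the transformed problem in $\mathbf v$ has a unique solution. The subtlety is that a priori a sphere-minimizer need not have a fixed sign pattern, so one must carefully chain the arguments: strong convexity in $\mathbf v$ forces a unique nonnegative minimizer, and then the Perron--Frobenius simplicity rules out any sign-indefinite minimizer achieving the same energy (any such competitor would, after the symmetrization $\mathbf u\mapsto|\mathbf u|$ of Theorem~\ref{theorem-discrete-energy-2}, produce a distinct nonnegative minimizer with equal energy, contradicting uniqueness of $\mathbf v^*$ unless $|\mathbf u|$ already equals $\mathbf u^*$, and simplicity then forces the competitor to be $\pm\mathbf u^*$). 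Making this chain airtight---especially verifying that the symmetrization inequality is strict unless the sign is already fixed---is where the care is needed; the existence and convexity steps are routine given the cited theorems.
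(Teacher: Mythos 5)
Your proposal is correct and follows essentially the same route as the paper's proof: existence and uniqueness from the strong convexity of $\mathbf E_h(\sqrt{\mathbf v})$ (Theorem~\ref{theorem-discrete-energy}), nonnegativity from Theorem~\ref{theorem-discrete-energy-2}, the eigenvalue equation from first-order optimality, and positivity plus the smallest-eigenvalue characterization from monotonicity and Perron--Frobenius (Theorems~\ref{thm:2nd-monotone} and~\ref{Perron-Frobenius}). You are in fact more explicit than the paper about resolving the entrywise sign ambiguity between minimizers of $\mathbf E_h(\mathbf u)$ on the sphere and the unique minimizer in the $\mathbf v=\mathbf u^2$ variable, a point the paper's one-line appeal to strong convexity glosses over.
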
 
	
\begin{proof}
	Strong convexity over a convex constraint in Theorem \ref{theorem-discrete-energy} gives the existence and uniqueness of the minimizer $u_h^*$.  Theorem \ref{theorem-discrete-energy-2}  implies that $u^*_h(\bx_i)\geq 0$. For minimizing $E_h(u_h)$ with $\langle u_h, u_h\rangle=1$, or equivalently minimizing $\mathbf E_h(\mathbf u)$ with $ \mathbf u^T\mathbb M  \mathbf u=1$, the Lagrangian for the constrained minimization is given by $L(u_h, \lambda_h)=E_h(u_h)-\lambda_h(\langle u_h, u_h\rangle-1)$. The minimizer must satisfy the critical point equation  $\frac{\delta L}{\delta u_h}=0$, thus $u^*_h$ satisfies \eqref{fem2}, or equivalently, $\mathbf u$ satisfies \eqref{fd2}.  By Theorem \ref{thm:2nd-monotone},  the matrix $A_{\mathbf u}=-\Delta_h+\mathbb V+\beta \diag(\mathbf u^2)$ is monotone. By Perron Frobenius Theorem (Theorem \ref{Perron-Frobenius}), such a monotone matrix $A_{\mathbf u^*}$ has a unique positive unit eigenvector associated with its smallest eigenvalue. Since $\mathbf u^*\geq 0$, it is the unique unit eigenvector to the smallest eigenvalue of the linear operator $A_{\mathbf u^*}$.
\end{proof}

\begin{remark}
	Neither Theorem  \ref{theorem-discrete-energy} nor Theorem \ref{theorem-discrete-energy-2} can be extended to $Q^2$ finite element method. Nonetheless, the monotonicity of the fourth-order scheme may hold if the mesh size is very small, which ensures that the ground state of the nonlinear eigenvalue problem gives rise to a positive eigenvector by Perron-Frobenius Theorem.
\end{remark}

\section{fully discretized Sobolev gradient descent and modified $H^1$ scheme}
\label{sec:discretescheme}

We define the schemes for minimizing the discrete energy \eqref{fem_matrix-energy} or \eqref{fd-energy} associated with the $P^k$ or $Q^k$ finite element method with suitable quadrature, under the normalization constraint:
\begin{equation}\label{manifold}
	\calM = \left\{ u_h \in V_0^h : \langle u_h, u_h\rangle = 1\right\} = \left\{\mathbf u\in \mathbb R^N: \langle \mathbf u, \mathbf u\rangle_h=\mathbf u^\top \mathbb M\mathbf u = 1\right\},
\end{equation}
where $u_h=\Pi(\mathbf u)$ with $\Pi$ being the interpolation operator defined in \eqref{interpolation}. The tangent space of the manifold $\mathcal M$  at $u_h$ or $\mathbf u$ is
\begin{equation*}
	\calT_{u_h}\calM = \calT_{\mathbf u}\calM = \left\{v_h \in V_0^h: \langle u_h, v_h\rangle = 0\right\} = \left\{\mathbf v\in \mathbb R^N: \langle \mathbf u, \mathbf v\rangle_h = 0\right\}.
\end{equation*}
	
\subsection{The Sobolev gradient descent}
The gradient of the energy $E_h(u_h)$, say $\nabla E_h(u_h)\in V_0^h$, should be understood in the sense of the Fr\'echet derivative in the space $V_0^h$, and can  be computed by 
\begin{align*}
    \langle \nabla E_h(u_h), v_h\rangle & =  \lim_{t\to 0}\frac{E_h(u_h+tv_h)-E_h(u_h)}{t}\\
	& = \mathbf v^\top \mathbb S \mathbf u^\top+\mathbf v^\top\mathbb M\mathbb V\mathbf u+\beta \mathbf v^\top\mathbb M\mathbf u^3=\langle \nabla u_h, \nabla v_h\rangle+\langle   V u_h,   v_h\rangle+\beta\langle   \Pi(\mathbf u^3),   v_h\rangle,
\end{align*}
for all $v_h\in V_0^h$. With the discrete integration by parts \eqref{integrationbyparts}, we get 
\begin{equation*}
	\nabla E_h(u_h)=-\Pi[\Delta_h \mathbf u] +\Pi(\mathbb V\mathbf u)+\beta \Pi(\mathbf u^3).
\end{equation*}
Similarly, since $\langle \mathbf u, \mathbf v\rangle_h=\mathbf u^\top \mathbb M\mathbf v$ as in \eqref{discreteL2norm}, the gradient $\nabla \mathbf E_h(\mathbf u)\in \mathbb R^N$ is given by
\[\nabla \mathbf E_h(\mathbf u)=\mathbb M^{-1}( \mathbb S \mathbf u+\mathbb M\mathbb V\mathbf u+\beta \mathbb M\mathbf u^3)=(-\Delta_h+\mathbb V)\mathbf u+\beta \mathbf u^3 = A_{\mathbf u} \mathbf u,\]
where $A_{\mathbf u}$ is defined in \eqref{eq:A_u}. Thus the two Fr\'echet derivatives $\nabla E_h(u_h)$ and $\nabla \mathbf E_h(\mathbf u)$ are also identical in the sense that $\nabla E_h(u_h)=\Pi[\nabla \mathbf E_h(\mathbf u)]$.

Given any inner product $\langle \cdot,\cdot\rangle_X$ on $\mathbb R^N$, one can equip the manifold $\mathcal M\subset \mathbb R^N$ defined in \eqref{manifold} with an Riemannian metric $g(\mathbf u, \mathbf v) = \langle\mathbf u,\mathbf v\rangle_X$. Let $\mathbb G_X\in\mathbb R^{N\times N}$ be the positive definite matrix satisfying
\begin{equation*}
	\langle \mathbf u,\mathbb G_X \mathbf v\rangle_X = \langle \mathbf u,\mathbf v\rangle_h,\quad\forall~\mathbf u,\mathbf v\in\mathbb R^N.
\end{equation*}
The {\it Riemannian gradient} of $\mathbf E_h(\mathbf u)$ at $\mathbf u\in \mathcal M$ is defined as $\nabla_X^\calR \mathbf E_h(\mathbf u)\in \mathcal T_{\mathbf u}\mathcal M$ satisfying
\begin{equation*}
	g\big(\nabla_X^\calR \mathbf E_h(\mathbf u), \mathbf v\big)=\langle \nabla \mathbf E_h(\mathbf u), \mathbf v\rangle_h, \quad \forall~\mathbf v\in \mathcal T_{\mathbf u}\mathcal M. 
\end{equation*}
Following a similar derivation of Fr\'echet derivatives as before, the gradient of the discrete energy with respect to the inner product $\langle \cdot,\cdot\rangle_X$ can be computed as
\begin{equation*}
	\nabla_X \mathbf E_h(\mathbf u) = \mathbb G_X A_{\mathbf u} \mathbf u.
\end{equation*}
For any $\mathbf w\in\mathbb R^N$, the projection of $\mathbf w$ onto $\calT_{\mathbf u}\calM$ with respect to $\langle \cdot,\cdot\rangle_X$ is given by
\begin{equation*}
	\calP_{\calT_{\mathbf u}\calM, X}(\mathbf w) = \mathbf w-\frac{\langle \mathbf u,\mathbf w\rangle_h}{\langle \mathbf u, \mathbb G_X\mathbf u\rangle_h} \mathbb G_X\mathbf u.
\end{equation*}
Therefore, the Riemannian gradient is
\begin{equation}
	\nabla_X^\calR \mathbf E_h(\mathbf u) = \calP_{\calT_{\mathbf u}\calM, X} \left(\nabla_X \mathbf E_h(\mathbf u)\right) = \nabla_X \mathbf E_h(\mathbf u)-\frac{\langle \mathbf u,\nabla_X \mathbf E_h(\mathbf u)\rangle_h}{\langle\mathbb G_X\mathbf u, \mathbf u\rangle_h} \mathbb G_X\mathbf u,
\end{equation}
and the Riemannian gradient descent  of minimizing $\mathbf E_h(\mathbf u)$ over $\calM$ with step size $\tau$  is 
$$\mathbf u^{n+1} = R_h\left(\mathbf u^n - \tau \nabla_X^\calR \mathbf E_h(\mathbf u^n)\right),\quad R_h(\mathbf u) = \frac{\mathbf u} {\sqrt{\langle \mathbf u, \mathbf u \rangle}_h},$$
where $R_h(\mathbf u)$ is the retraction operator approximating the exponential map \cite{absil}.

\subsection{The modified $H^1$ scheme}
Different choices of the inner product $\langle \mathbf u,\mathbf v\rangle_X$ or the Riemannian metric $g$ would lead to different schemes. In this work, we mainly focus on the modified $H^1$-scheme. In particular, for some constant $\alpha>0$ the inner product $(\nabla u, \nabla v)+\alpha ( u,  v)$ for $H^1_0(\Omega)$ gives the following discrete inner product or metric
\begin{equation}
	\label{modifiednorm}
	g(\mathbf u, \mathbf v) =\langle \mathbf u,\mathbf v\rangle_X = \langle\nabla u_h, \nabla v_h\rangle+\alpha \langle u_h,  v_h\rangle  = \langle \mathbf u ,(-\Delta_h+\alpha \mathbb I)\mathbf v\rangle_h =\mathbf u^\top (\mathbb S+\alpha \mathbb M)\mathbf v.
\end{equation}
This induces $\mathbb G_X = (-\Delta_h+\alpha \mathbb I)^{-1}$ and
\begin{subequations}
	\label{modified-H1-scheme}
	\begin{equation}\label{eq:gradX}
		\nabla_X \mathbf E_h(\mathbf u) = \mathbb G_X A_{\mathbf u} \mathbf u =  (-\Delta_h+\alpha \mathbb  I)^{-1} (-\Delta_h+\mathbb V + \beta \text{diag}(\mathbf u)^2 )\mathbf u.
	\end{equation}
	The corresponding Riemannian gradient is 
    \begin{equation}\label{eq:Riemann_gradX}
		\nabla_X^\calR \mathbf E_h(\mathbf u)={\nabla_X E_h (\mathbf u)}-\frac{\langle \mathbf u, {\nabla_X E_h (\mathbf u)} \rangle_h}{\langle \mathbf u, (-\Delta_h+\alpha\mathbb  I)^{-1} \mathbf u \rangle_h} (-\Delta_h+\alpha \mathbb I)^{-1} \mathbf u,
	\end{equation}
    and the Riemannian gradient descent method or the Sobolev gradient flow under the modified $H^1$-norm is hence given by
    \begin{equation}
		\mathbf u^{n+1} = R_h\left(\mathbf u^n - \tau \nabla_X^\calR \mathbf E_h(\mathbf u^n)\right).
	\end{equation}	
\end{subequations}
If $\alpha=0$, then \eqref{modified-H1-scheme} is the $H^1$ gradient flow algorithm in \cite{henning2020sobolev}. There are algorithms induced by other more complicated Riemannian metrics such as $a_0$-scheme with $g_u(w,z) = (\nabla w, \nabla z)+( w, V z)$ and the $a_u$-scheme with $g_u(w,z) = (\nabla w, \nabla z)+( w, V z) + \beta(w,u^2 z)$. We refer interested readers to \cites{henning2020sobolev, zhang2019exponential, chen2023convergence}.

\section{Global and local convergence}
\label{sec:convergence}
This section proves the convergence of the modified $H^1$-scheme \eqref{modified-H1-scheme}. The theories are inspired by our prior work \cite{chen2023convergence} without spatial discretization and the main difficulty/novelty of this work is the analysis of the discrete schemes and discrete eigengap.
	
\subsection{Energy decay and global convergence}
\label{sec:global_converge}
	
We define the discrete $L^2$ norm and the $X$-norm with a fixed parameter $\alpha>0$ as follows:
\begin{align*}
    \|\mathbf u\|_{2}&=\sqrt{\langle \mathbf u, \mathbf u\rangle_h}=\sqrt{ \mathbf u^\top\mathbb M \mathbf u},\\
    \|\mathbf u\|_{X}&=\sqrt{\langle \mathbf u, \mathbf u\rangle_X}=\sqrt{\langle \mathbf u, \mathbb G_X^{-1}\mathbf u\rangle_h}=\sqrt{ \mathbf u^\top(\mathbb S+\alpha \mathbb M) \mathbf u}=\sqrt{\langle \mathbf u, (-\Delta_h+\alpha\mathbb I) \mathbf u\rangle_h}.
\end{align*}
Note that we have omitted the dependence of $X$-norm on $\alpha$ in the above. The main theorem in this subsection is stated as follows, which quantitatively characterizes the energy decay property of the Sobolev gradient flow under the modified $H^1$-norm \eqref{modified-H1-scheme}.

\begin{assumption}\label{asp:V}
    The potential energy $V$ satisfies that $V_{\min}\leq V\leq V_{\max}$ for some constants $V_{\min},V_{\max}\in(0,+\infty)$.
\end{assumption}

\begin{theorem}\label{thm:energy_decay}
    Suppose that Assumption~\ref{asp:V} holds. Let $\mathbf u^0\in \calM\subset \bR^N$ and let $\{\mathbf u^n\}_{n=0}^\infty$ be the sequence generated by \eqref{modified-H1-scheme}. There exist constants $C_u, C_g, C_\tau > 0$ depending only on $\|\mathbf u^0\|_X$ and $\Omega, V_{\min}, V_{\max}, \alpha, \beta, k$, such that as long as $0<\tau_{\min} \leq \tau_n\leq \tau_{\max}\leq \min\{1, C_\tau\},\ \forall~n\geq 0$, the followings holds for any $n\geq 0$:
    \begin{itemize}
	   \item[(i)] $\norm{\mathbf u^n}_X\leq C_u$.
	   \item[(ii)] $\norm{\nabla_X^\calR \mathbf E_h(\mathbf u^n)}_X\leq \norm{\nabla_X \mathbf E_h(\mathbf u^n)}_X \leq C_g $.
	   \item[(iii)] $\mathbf E_h(\mathbf u^n) - \mathbf E_h(\mathbf u^{n+1})\geq C_d \norm{\nabla_X^\calR \mathbf E_h(\mathbf u^n)}_X^2$, where $C_d = \frac{\tau_{\min}}{2}$.
    \end{itemize}
\end{theorem}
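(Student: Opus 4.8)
The plan is to run a single induction that simultaneously bootstraps the boundedness (i) and the energy decay (iii), with the gradient bound (ii) as the bridge between them. The backbone is the elementary observation that on the constraint manifold $\calM$ the $X$-norm is controlled by the energy: since $\langle\mathbf u,\mathbf u\rangle_h=1$, $V\geq V_{\min}>0$, and the quartic term is nonnegative, one has $\norm{\mathbf u}_X^2=\langle-\Delta_h\mathbf u,\mathbf u\rangle_h+\alpha\leq 2\mathbf E_h(\mathbf u)+\alpha$. Conversely, $\mathbf E_h(\mathbf u^0)$ is bounded from above by $\norm{\mathbf u^0}_X$ and the parameters (the potential term by $V_{\max}$, the quartic term by a Sobolev bound described below), yielding a finite constant $\bar E_0$. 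I would therefore fix $C_u:=\sqrt{2\bar E_0+\alpha}$ at the outset, so that $\norm{\mathbf u^n}_X\leq C_u$ is equivalent to $\mathbf E_h(\mathbf u^n)\leq\bar E_0$; once (iii) is shown to force monotone energy decay, (i) propagates automatically, and the base case holds by construction.

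The key to (ii) is to express the $X$-norm of the gradient through a dual norm. First, because $\calP_{\calT_{\mathbf u}\calM,X}$ is the $X$-orthogonal projection onto the tangent space (this follows from the identity $\langle\mathbb G_X\mathbf u,\mathbf v\rangle_X=\langle\mathbf u,\mathbf v\rangle_h$), the projection is non-expansive in the $X$-norm, so $\norm{\nabla_X^\calR\mathbf E_h(\mathbf u)}_X\leq\norm{\nabla_X\mathbf E_h(\mathbf u)}_X$ comes for free. Writing $\mathbf w=A_{\mathbf u}\mathbf u$ with $A_{\mathbf u}$ from \eqref{eq:A_u}, a short computation gives $\norm{\nabla_X\mathbf E_h(\mathbf u)}_X=\sup_{\mathbf v\neq 0}\langle\mathbf w,\mathbf v\rangle_h/\norm{\mathbf v}_X$, and $\langle\mathbf w,\mathbf v\rangle_h$ splits into the three finite-element forms $\langle\nabla u_h,\nabla v_h\rangle+\langle Vu_h,v_h\rangle+\beta\langle u_h^3,v_h\rangle$. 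The first is $\leq\norm{\mathbf u}_X\norm{\mathbf v}_X$ and the second $\leq(V_{\max}/\alpha)\norm{\mathbf u}_X\norm{\mathbf v}_X$ by Cauchy--Schwarz and $\norm{\cdot}_2\leq\alpha^{-1/2}\norm{\cdot}_X$. The cubic term is the crux: I would bound $\beta\langle u_h^3,v_h\rangle$ by a discrete H\"older inequality together with the equivalence of the quadrature-weighted nodal $\ell^p$-norms with the continuous $L^p$-norms (uniform in $h$ for fixed degree $k$), reducing it to $\beta\norm{u_h}_{L^6}^3\norm{v_h}_{L^2}$; the Sobolev embedding $H_0^1(\Omega)\hookrightarrow L^6(\Omega)$ for $d\leq 3$ then gives $\leq C\beta\norm{\mathbf u}_X^3\norm{\mathbf v}_X$ with $C$ depending only on $\Omega$ and $k$. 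Under $\norm{\mathbf u}_X\leq C_u$ this produces the stated $C_g$.

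For (iii) I would analyze $\phi(t)=\mathbf E_h(c(t))$ along the retraction curve $c(t)=R_h(\mathbf u^n-t\mathbf d)$ with $\mathbf d=\nabla_X^\calR\mathbf E_h(\mathbf u^n)$, so $c(0)=\mathbf u^n$ and $c(\tau_n)=\mathbf u^{n+1}$. Since $\mathbf d$ is tangent, $c'(0)=-\mathbf d$, and using the defining identity $\langle\nabla\mathbf E_h(\mathbf u),\nabla_X^\calR\mathbf E_h(\mathbf u)\rangle_h=\norm{\nabla_X^\calR\mathbf E_h(\mathbf u)}_X^2$ one gets $\phi'(0)=-\norm{\mathbf d}_X^2$. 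Taylor's formula with integral remainder then yields $\phi(\tau_n)\leq\phi(0)-\tau_n\norm{\mathbf d}_X^2+\tfrac12\tau_n^2\sup_{[0,\tau_n]}|\phi''|$, so it remains to prove $|\phi''(t)|\leq L\norm{\mathbf d}_X^2$ with $L$ mesh-independent; taking $C_\tau=\min\{1,1/L\}$ then gives $\mathbf E_h(\mathbf u^n)-\mathbf E_h(\mathbf u^{n+1})\geq\tfrac{\tau_n}{2}\norm{\mathbf d}_X^2\geq\tfrac{\tau_{\min}}{2}\norm{\mathbf d}_X^2$, which is exactly (iii) with $C_d=\tau_{\min}/2$, and closes the induction via the energy–norm relation of the first paragraph.

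I expect the bound on $\phi''$ to be the main obstacle, since $\phi''(t)=\langle\mathrm{Hess}\,\mathbf E_h(c(t))[c'(t)],c'(t)\rangle_h+\langle\nabla\mathbf E_h(c(t)),c''(t)\rangle_h$ mixes the energy Hessian with the curvature of the retraction, and both must be controlled uniformly in $h$. For the Hessian term the only dangerous contribution comes from the quartic part of $\mathbf E_h$ and is handled exactly as the cubic term above, via H\"older and $H^1\hookrightarrow L^6$. For the curvature term I would compute $c'$ and $c''$ explicitly from $c=\gamma/\norm{\gamma}_2$ with $\gamma(t)=\mathbf u^n-t\mathbf d$; the crucial point is that $\mathbf d$ being $\langle\cdot,\cdot\rangle_h$-orthogonal to $\mathbf u^n$ forces $\norm{\gamma(t)}_2^2=1+t^2\norm{\mathbf d}_2^2\geq 1$, which prevents any small-denominator blow-up and, together with $\norm{\cdot}_2\leq\alpha^{-1/2}\norm{\cdot}_X$ and $\norm{\mathbf d}_X\leq C_g$, yields $\norm{c'(t)}_X\leq\tilde C\norm{\mathbf d}_X$ and $\norm{c''(t)}_X\leq\tilde C\norm{\mathbf d}_X^2$ on the whole segment. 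Pairing $c''$ with $\nabla\mathbf E_h(c(t))$ through the dual norm of step (ii) (noting $\norm{c(t)}_X$ stays bounded because $\norm{\gamma}_2\geq 1$) then closes $|\phi''|\leq L\norm{\mathbf d}_X^2$. Throughout, every constant must be traced to be independent of the mesh, which is precisely where the quadrature--$L^p$ equivalence and the Sobolev embedding do the essential work.
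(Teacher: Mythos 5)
Your proposal is correct in substance and reaches the same conclusion, but it organizes the key estimate (iii) along a genuinely different route from the paper. The paper works discretely: it writes $\mathbf u^{n+1}=\tilde{\mathbf u}^n+\mathbf R^n$ with $\tilde{\mathbf u}^n=\mathbf u^n-\tau_n\nabla_X^\calR\mathbf E_h(\mathbf u^n)$, bounds the retraction error $\mathbf R^n$ by a separate lemma exploiting $\|\mathbf u^n+\mathbf v\|_2^2=1+\|\mathbf v\|_2^2$, and then controls the two energy increments $\mathbf E_h(\mathbf u^n)-\mathbf E_h(\tilde{\mathbf u}^n)$ and $\mathbf E_h(\tilde{\mathbf u}^n)-\mathbf E_h(\tilde{\mathbf u}^n+\mathbf R^n)$ via an explicit algebraic expansion of $\mathbf E_h(\mathbf u+\mathbf v)-\mathbf E_h(\mathbf u)$ whose remainder is a polynomial in $\|\mathbf u\|_X$ and $\|\mathbf v\|_X$ (Lemma~\ref{lem:linear_error}); the price is bookkeeping of many explicit constants, the gain is that every remainder is written out in closed form. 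You instead differentiate $\phi(t)=\mathbf E_h(R_h(\mathbf u^n-t\mathbf d))$ twice along the retraction curve and invoke Taylor with integral remainder, which compresses both error sources (the quadratic energy remainder and the retraction curvature) into a single bound $|\phi''|\leq L\|\mathbf d\|_X^2$; your observation that tangency forces $\|\gamma(t)\|_2^2=1+t^2\|\mathbf d\|_2^2\geq 1$ plays exactly the role of the paper's retraction lemma, and your bounds $\|c'\|_X\lesssim\|\mathbf d\|_X$, $\|c''\|_X\lesssim\|\mathbf d\|_X^2$ do check out given $\|\mathbf d\|_2\leq\alpha^{-1/2}C_g$. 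Both routes rest on the identical analytic core --- the quadrature-weighted $\ell^p$/$L^p$ norm equivalence, the $X$/$H^1$ equivalence, and $H_0^1\hookrightarrow L^6$ for $d\leq 3$, which is what makes all constants mesh-independent (the paper's Lemma~\ref{lemma-norm}). Two smaller organizational differences: for (ii) you use the dual-norm characterization $\|\mathbb G_X\mathbf w\|_X=\sup_{\mathbf v}\langle\mathbf w,\mathbf v\rangle_h/\|\mathbf v\|_X$ where the paper bounds $\mathbb G_X$ applied to each term via $\|(-\Delta_h+\alpha\mathbb I)^{-1}\mathbf u\|_X\leq\alpha^{-1/2}\|\mathbf u\|_2$ --- these are equivalent; and for closing the induction you use the normalization $\langle\mathbf u,\mathbf u\rangle_h=1$ directly to get $\|\mathbf u\|_X^2\leq 2\mathbf E_h(\mathbf u)+\alpha$ on $\calM$, where the paper uses the coercivity constant $C_3$ from $\langle-\Delta_h\mathbf u,\mathbf u\rangle_h+\langle\mathbb V\mathbf u,\mathbf u\rangle_h\geq C_3\|\mathbf u\|_X^2$; yours is marginally cleaner but only valid on the constraint manifold, which suffices here since $R_h$ keeps all iterates on $\calM$. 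The one place where your write-up is only a sketch is the uniform bound on $\phi''$, but the ingredients you name (quartic Hessian term via H\"older plus $L^6$ embedding, curvature term via the explicit formula for $c''$ and the lower bound $\|\gamma\|_2\geq 1$) are the right ones and do close the argument.
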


A direct corollary is the global convergence to a critical point.

\begin{corollary}\label{cor:global_converge}
    In the same setting as in Theorem~\ref{thm:energy_decay}, every limit point of $\{\mathbf u^n\}_{n=0}^\infty$ is a critical point.
\end{corollary}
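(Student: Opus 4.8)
The plan is to run the standard ``summable decrease implies vanishing gradient'' argument for descent methods, using the three estimates supplied by Theorem~\ref{thm:energy_decay}. First I would observe that the discrete energy is bounded below on $\calM$: using the identity $\langle-\Delta_h\mathbf u,\mathbf u\rangle_h=\mathbf u^\top\mathbb S\mathbf u\geq 0$ from the discrete integration by parts \eqref{integrationbyparts} together with positive semidefiniteness of the stiffness matrix $\mathbb S$, and using $\mathbb V\geq V_{\min}\mathbb I>0$ and $\beta>0$ under Assumption~\ref{asp:V}, every term of
$\mathbf E_h(\mathbf u)=\tfrac12\langle-\Delta_h\mathbf u,\mathbf u\rangle_h+\tfrac12\langle\mathbb V\mathbf u,\mathbf u\rangle_h+\tfrac\beta4\langle\mathbf u^2,\mathbf u^2\rangle_h$
is nonnegative, so $\mathbf E_h\geq 0$. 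Since part (iii) of Theorem~\ref{thm:energy_decay} shows that $\{\mathbf E_h(\mathbf u^n)\}_n$ is nonincreasing, the energy sequence converges to some finite limit $E^\infty$.

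Next I would telescope the sufficient-decrease inequality. Summing (iii) over $n=0,\dots,M$ gives $C_d\sum_{n=0}^M\norm{\nabla_X^\calR\mathbf E_h(\mathbf u^n)}_X^2\leq \mathbf E_h(\mathbf u^0)-\mathbf E_h(\mathbf u^{M+1})\leq \mathbf E_h(\mathbf u^0)-E^\infty$, a bound independent of $M$. Letting $M\to\infty$, the series $\sum_n\norm{\nabla_X^\calR\mathbf E_h(\mathbf u^n)}_X^2$ converges, and in particular its general term tends to zero, i.e. $\norm{\nabla_X^\calR\mathbf E_h(\mathbf u^n)}_X\to 0$ as $n\to\infty$.

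Finally comes the limit-point step. Any limit point $\mathbf u^*$ lies in $\calM$, since $\calM$ (an ellipsoid in $\bR^N$) is closed and every iterate lies on it, and along some subsequence $\mathbf u^{n_k}\to\mathbf u^*$. The map $\mathbf u\mapsto\nabla_X^\calR\mathbf E_h(\mathbf u)$ is continuous on $\calM$: it is built from the polynomial map $\mathbf u\mapsto A_{\mathbf u}\mathbf u$, the fixed linear operators $\mathbb G_X$ and $\mathbb M$, and the scalar factor $\langle\mathbf u,\nabla_X\mathbf E_h(\mathbf u)\rangle_h/\langle\mathbb G_X\mathbf u,\mathbf u\rangle_h$, whose denominator stays bounded away from $0$ on $\calM$ because $\mathbb G_X=(-\Delta_h+\alpha\mathbb I)^{-1}$ is positive definite and $\langle\mathbf u,\mathbf u\rangle_h=1$ there. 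Passing to the limit along the subsequence and using equivalence of norms on $\bR^N$, continuity yields $\nabla_X^\calR\mathbf E_h(\mathbf u^*)=\lim_k\nabla_X^\calR\mathbf E_h(\mathbf u^{n_k})=0$, so $\mathbf u^*$ is a critical point.

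I expect no serious obstacle: the argument is routine once Theorem~\ref{thm:energy_decay} is in hand. The only point requiring a moment's care is verifying that the denominator in the projection formula for $\nabla_X^\calR\mathbf E_h$ does not degenerate along the iterates, so that the Riemannian gradient is genuinely continuous on $\calM$ and the limit may be exchanged with the gradient; this is immediate from positive definiteness of $\mathbb G_X$ and the normalization $\langle\mathbf u,\mathbf u\rangle_h=1$ on $\calM$.
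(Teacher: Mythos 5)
Your proposal is correct and follows essentially the same route as the paper's proof: deduce $\norm{\nabla_X^\calR \mathbf E_h(\mathbf u^n)}_X\to 0$ from the sufficient-decrease inequality (iii) together with the lower bound on the energy, then pass to the limit along a convergent subsequence using continuity of $\mathbf u\mapsto\nabla_X^\calR\mathbf E_h(\mathbf u)$ on $\calM$. You merely spell out two steps the paper leaves implicit, namely the telescoping/boundedness-below argument and the non-degeneracy of the denominator $\langle\mathbb G_X\mathbf u,\mathbf u\rangle_h$ on $\calM$, both of which are verified correctly.
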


The rest of this subsection is for proving Theorem~\ref{thm:energy_decay} and Corollary~\ref{cor:global_converge}. We need a sequence of lemmas with the proofs of the first two lemmas being deferred to Appendix~\ref{apx:pf_global}.

\begin{lemma}\label{lem:esti_retraction}
	For any $\mathbf u\in\calM$ and $\mathbf v\in \calT_{\mathbf u}\calM$, it holds that
	\begin{equation}\label{esti_retraction}
		\norm{R_h(\mathbf u+\mathbf v) - (\mathbf u+\mathbf v)}_X\leq \frac{1}{2}\norm{\mathbf v}^2_{2} \norm{\mathbf u+\mathbf v}_X.
	\end{equation}
\end{lemma}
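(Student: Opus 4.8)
The plan is to exploit the fact that the retraction $R_h$ is nothing but normalization in the discrete $L^2$ norm, so that $R_h(\mathbf u+\mathbf v)$ is a scalar multiple of $\mathbf u+\mathbf v$ and the whole estimate collapses to a one-variable scalar inequality. The orthogonality of $\mathbf u$ and $\mathbf v$ in $\langle\cdot,\cdot\rangle_h$ is the ingredient that makes the scalar clean.

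First I would compute $\norm{\mathbf u+\mathbf v}_2$. Since $\mathbf u\in\calM$ gives $\langle\mathbf u,\mathbf u\rangle_h=1$ and $\mathbf v\in\calT_{\mathbf u}\calM$ gives $\langle\mathbf u,\mathbf v\rangle_h=0$, expanding the discrete $L^2$ inner product yields $\norm{\mathbf u+\mathbf v}_2^2=\langle\mathbf u,\mathbf u\rangle_h+2\langle\mathbf u,\mathbf v\rangle_h+\langle\mathbf v,\mathbf v\rangle_h=1+\norm{\mathbf v}_2^2$. Hence $R_h(\mathbf u+\mathbf v)=(\mathbf u+\mathbf v)/\sqrt{1+\norm{\mathbf v}_2^2}$, and therefore
\[
R_h(\mathbf u+\mathbf v)-(\mathbf u+\mathbf v)=\left(\frac{1}{\sqrt{1+\norm{\mathbf v}_2^2}}-1\right)(\mathbf u+\mathbf v),
\]
which is just a scalar multiple of $\mathbf u+\mathbf v$.

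Next I would take the $X$-norm of both sides and pull the scalar out, obtaining
\[
\norm{R_h(\mathbf u+\mathbf v)-(\mathbf u+\mathbf v)}_X=\left|1-\frac{1}{\sqrt{1+\norm{\mathbf v}_2^2}}\right|\norm{\mathbf u+\mathbf v}_X.
\]
It then remains to bound the scalar factor. Writing $t=\norm{\mathbf v}_2^2\geq 0$, the target inequality is $1-(1+t)^{-1/2}\leq t/2$. This follows because the function $g(t)=\tfrac{t}{2}-1+(1+t)^{-1/2}$ satisfies $g(0)=0$ and has derivative $g'(t)=\tfrac12\bigl(1-(1+t)^{-3/2}\bigr)\geq 0$ on $[0,\infty)$, so $g(t)\geq 0$; note also $(1+t)^{-1/2}\leq 1$, so the absolute value equals $1-(1+t)^{-1/2}$. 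Substituting this bound gives $\norm{R_h(\mathbf u+\mathbf v)-(\mathbf u+\mathbf v)}_X\leq\tfrac12\norm{\mathbf v}_2^2\,\norm{\mathbf u+\mathbf v}_X$, which is the claim.

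I do not anticipate any serious obstacle: the argument is essentially a second-order Taylor control of $(1+t)^{-1/2}$ near $t=0$, valid uniformly for all $t\geq 0$. The only two points needing care are using the tangency condition $\langle\mathbf u,\mathbf v\rangle_h=0$ to secure the exact identity $\norm{\mathbf u+\mathbf v}_2^2=1+\norm{\mathbf v}_2^2$, and verifying the elementary scalar inequality on the whole half-line rather than merely asymptotically.
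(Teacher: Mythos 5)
Your proof is correct and follows essentially the same route as the paper: use the tangency condition to get $\norm{\mathbf u+\mathbf v}_2^2=1+\norm{\mathbf v}_2^2$, write the retraction error as a scalar multiple of $\mathbf u+\mathbf v$, and invoke the elementary bound $1-(1+t)^{-1/2}\leq t/2$ for $t\geq 0$. The only difference is that you supply a derivative argument for the scalar inequality, which the paper simply asserts.
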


\begin{lemma}
	\label{lemma-norm}
	There exist positive constants $C_1,C_2$ independent of the mesh size $h$ such that for any $\mathbf u\in \bR^N$: 
\begin{itemize}
    \item[(i)] $\norm{\mathbf u}_{2}\leq \frac{1}{\sqrt{\alpha}} \norm{\mathbf u}_X$;
    \item[(ii)] $\norm{(-\Delta_h+\alpha \mathbb I)^{-1}\mathbf u}_X \leq \frac{1}{\sqrt{\alpha}} \norm{\mathbf u}_2$;
    \item[(iii)] $\|\mathbf u^2\|_2\leq C_1\|\mathbf u\|_X^2$;
    \item[(iv)] $\|\mathbf u^3\|_2\leq C_2\|\mathbf u\|_X^3$.
    \end{itemize}
\end{lemma}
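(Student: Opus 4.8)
The plan is to split the four estimates into two groups. Parts (i) and (ii) are elementary facts about the two inner products $\langle\cdot,\cdot\rangle_h$ and $\langle\cdot,\cdot\rangle_X$, whereas (iii) and (iv) are genuinely discrete Sobolev embeddings whose whole point is that the constants must not deteriorate as $h\to 0$.

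For (i), I would simply use $\|\mathbf u\|_X^2=\mathbf u^\top\mathbb S\mathbf u+\alpha\,\mathbf u^\top\mathbb M\mathbf u\geq\alpha\,\mathbf u^\top\mathbb M\mathbf u=\alpha\|\mathbf u\|_2^2$, since the stiffness matrix $\mathbb S$ is positive semidefinite; taking square roots gives the claim. For (ii), the key observation is that $B:=-\Delta_h+\alpha\mathbb I=\mathbb G_X^{-1}$ is self-adjoint and positive definite with respect to $\langle\cdot,\cdot\rangle_h$, because $\langle\mathbf u,B\mathbf v\rangle_h=\mathbf u^\top(\mathbb S+\alpha\mathbb M)\mathbf v$ is a symmetric form; moreover $\langle\mathbf u,B\mathbf u\rangle_h=\|\mathbf u\|_X^2\geq\alpha\|\mathbf u\|_2^2$ by (i), so $B\succeq\alpha\mathbb I$ and hence $\mathbb G_X\preceq\alpha^{-1}\mathbb I$ in the $\langle\cdot,\cdot\rangle_h$ ordering. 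The identity $\|\mathbb G_X\mathbf u\|_X^2=\langle\mathbb G_X\mathbf u,B\,\mathbb G_X\mathbf u\rangle_h=\langle\mathbb G_X\mathbf u,\mathbf u\rangle_h\leq\alpha^{-1}\|\mathbf u\|_2^2$ then delivers (ii); equivalently one combines $\|\mathbb G_X\mathbf u\|_X^2=\langle\mathbb G_X\mathbf u,\mathbf u\rangle_h$ with Cauchy--Schwarz in $\langle\cdot,\cdot\rangle_h$ and part (i).

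The substance lies in (iii) and (iv). Writing $u_h=\Pi(\mathbf u)\in V_0^h\subset H_0^1(\Omega)$, I would first reduce the discrete claims to continuous Sobolev embeddings by means of two mesh-independent norm equivalences on $V_0^h$: the weighted nodal sums agree with the genuine integrals up to constants depending only on $k$ and $d$, namely $\sum_i w_i u_i^{2p}$ is comparable to $\int_\Omega u_h^{2p}\,\mathrm d\bx$ for $p=1,2,3$, and likewise $\langle\nabla u_h,\nabla u_h\rangle$ is comparable to $\int_\Omega|\nabla u_h|^2\,\mathrm d\bx$. Each such equivalence follows from the standard reference-element argument: on the fixed reference simplex or cube all norms on the finite-dimensional space $P^k$ (respectively $Q^k$) are equivalent because the nodal evaluation map is a linear isomorphism, and under the affine (shape-regular) map to a physical element of diameter $h$ both sides of a given equivalence scale by the same power of $h$, so the element-wise constants are $h$-free; summing over elements, using that the global weight at a node is the sum of its local contributions, produces the global equivalences.

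Granting these, $\|\mathbf u^2\|_2^2=\sum_i w_i u_i^4$ is bounded by a constant times $\|u_h\|_{L^4}^4$; the embedding $H^1(\Omega)\hookrightarrow L^4(\Omega)$, valid on the bounded Lipschitz domain $\Omega$ for $d\leq 3$, gives $\|u_h\|_{L^4}^2\lesssim\|u_h\|_{H^1}^2$; and finally $\|u_h\|_{H^1}^2=\|\nabla u_h\|_{L^2}^2+\|u_h\|_{L^2}^2$ is controlled by a constant times $\mathbf u^\top\mathbb S\mathbf u+\|\mathbf u\|_2^2\leq\min\{1,\alpha\}^{-1}\|\mathbf u\|_X^2$, which chains to $\|\mathbf u^2\|_2\leq C_1\|\mathbf u\|_X^2$. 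Part (iv) is identical with $L^4$ replaced by $L^6$ and the embedding $H^1(\Omega)\hookrightarrow L^6(\Omega)$, which remains valid for $d\leq 3$ (it is the borderline case when $d=3$). The main obstacle is establishing the $h$-uniformity of the nodal-to-integral equivalences on an unstructured shape-regular mesh: this is the only place where $h$ could enter, and it is precisely the scaling and shape-regularity argument that keeps the constants bounded, whereas the Sobolev constants are intrinsic to $\Omega$ and $d$ and hence automatically independent of the mesh.
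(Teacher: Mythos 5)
Your proposal is correct and follows essentially the same route as the paper: (i) from positive semidefiniteness of $\mathbb S$, (ii) from the identity $\norm{\mathbb G_X\mathbf u}_X^2=\langle\mathbb G_X\mathbf u,\mathbf u\rangle_h$ combined with (i) (the paper phrases this via Cauchy--Schwarz in $\langle\cdot,\cdot\rangle_h$, equivalent to your operator-ordering argument), and (iii)--(iv) by reducing the discrete quantities to $\|u_h\|_{L^4}$, $\|u_h\|_{L^6}$ and $\|u_h\|_{H^1}$ through $h$-uniform reference-element norm equivalences and then invoking the Sobolev embeddings $H^1(\Omega)\hookrightarrow L^4(\Omega), L^6(\Omega)$ for $d\leq 3$. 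The only cosmetic difference is that the paper packages the gradient-plus-$L^2$ comparison as a single quoted $V^h$-ellipticity equivalence between $\|\cdot\|_X$ and $\|\cdot\|_{H^1}$, whereas you establish the two pieces separately; both rest on the same scaling and shape-regularity argument.
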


\begin{lemma}\label{lem:esti_gradEu}
    Let $C_2$ be the one in Lemma~\ref{lemma-norm} and $V_{\alpha,\max} = \|V-\alpha\|_{L^{\infty}(\Omega)}.$ Then it holds for any $\mathbf u\in\calM$ that
    \begin{align}
        \notag&\langle \nabla_X^\calR \mathbf E_h(\mathbf u) , \nabla_X \mathbf E_h(\mathbf u)\rangle_X=\|\nabla_X^\calR \mathbf E_h(\mathbf u)\|^2_X, \\
        \label{eq:bound_nableE}
        &\norm{\nabla_X^\calR \mathbf E_h(\mathbf u)}_X\leq \norm{\nabla_X \mathbf E_h(\mathbf u)}_X \leq \norm{\mathbf u}_X + \frac{V_{\alpha, \max}}{\alpha} \norm{\mathbf u}_X  + \frac{\beta C_2}{\sqrt{\alpha}} \norm{\mathbf u}_X^3.
    \end{align}
\end{lemma}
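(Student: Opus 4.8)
The plan is to handle the two displayed assertions separately, abbreviating $g := \nabla_X \mathbf E_h(\mathbf u)$ and $g^{\calR} := \nabla_X^{\calR}\mathbf E_h(\mathbf u)$. The first line (the identity $\langle g^{\calR}, g\rangle_X = \|g^{\calR}\|_X^2$) together with the first inequality $\|g^{\calR}\|_X \le \|g\|_X$ is purely geometric: it expresses that $g^{\calR}$ is the $X$-orthogonal projection of $g$ onto $\calT_{\mathbf u}\calM$. The second inequality is a mechanical estimate of $\|g\|_X$ using Lemma~\ref{lemma-norm}.

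For the geometric part, the key is the adjunction $\langle \mathbb G_X\mathbf u, \mathbf v\rangle_X = \langle \mathbf u, \mathbf v\rangle_h$ for all $\mathbf v$, which follows at once from the defining relation $\langle \mathbf a, \mathbb G_X\mathbf b\rangle_X = \langle \mathbf a, \mathbf b\rangle_h$ and the symmetry of both inner products. By the definition of $g^{\calR}$, the difference $g - g^{\calR}$ is a scalar multiple of $\mathbb G_X\mathbf u$, and $g^{\calR}\in\calT_{\mathbf u}\calM$ means $\langle \mathbf u, g^{\calR}\rangle_h = 0$; hence $\langle g - g^{\calR}, g^{\calR}\rangle_X = c\,\langle \mathbb G_X\mathbf u, g^{\calR}\rangle_X = c\,\langle \mathbf u, g^{\calR}\rangle_h = 0$. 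Rearranging gives the identity $\langle g, g^{\calR}\rangle_X = \|g^{\calR}\|_X^2$, and the Pythagorean identity $\|g\|_X^2 = \|g^{\calR}\|_X^2 + \|g - g^{\calR}\|_X^2$ immediately yields $\|g^{\calR}\|_X \le \|g\|_X$.

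For the explicit bound on $\|g\|_X$, the crucial step is the decomposition $A_{\mathbf u}\mathbf u = (-\Delta_h + \alpha\mathbb I)\mathbf u + (\mathbb V - \alpha\mathbb I)\mathbf u + \beta\mathbf u^3$. Applying $\mathbb G_X = (-\Delta_h+\alpha\mathbb I)^{-1}$ and using $\mathbb G_X(-\Delta_h+\alpha\mathbb I) = \mathbb I$ gives $g = \mathbf u + \mathbb G_X(\mathbb V - \alpha\mathbb I)\mathbf u + \beta\,\mathbb G_X\mathbf u^3$, so by the triangle inequality $\|g\|_X \le \|\mathbf u\|_X + \|\mathbb G_X(\mathbb V - \alpha\mathbb I)\mathbf u\|_X + \beta\|\mathbb G_X\mathbf u^3\|_X$. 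For the potential term I would invoke Lemma~\ref{lemma-norm}(ii) to pass to the $L^2$-norm, bound $\|(\mathbb V - \alpha\mathbb I)\mathbf u\|_2 \le V_{\alpha,\max}\|\mathbf u\|_2$ entrywise (using that $\mathbb V$ is diagonal and $\max_i|V_i - \alpha| \le V_{\alpha,\max}$), and then apply Lemma~\ref{lemma-norm}(i), producing the factor $V_{\alpha,\max}/\alpha$. For the cubic term I would chain Lemma~\ref{lemma-norm}(ii) with Lemma~\ref{lemma-norm}(iv) to get $\tfrac{\beta C_2}{\sqrt{\alpha}}\|\mathbf u\|_X^3$. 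Summing the three pieces gives exactly \eqref{eq:bound_nableE}.

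The computation is essentially routine once the decomposition is in place; the one structural point I would flag as the crux is recognizing that peeling off the $(-\Delta_h+\alpha\mathbb I)\mathbf u$ summand makes $\mathbb G_X$ act as the identity and reproduces the leading term $\|\mathbf u\|_X$ exactly, leaving only the lower-order potential and nonlinear pieces to be absorbed by the smoothing bound of Lemma~\ref{lemma-norm}(ii). Everything else is the triangle inequality and careful bookkeeping of the constants $\alpha$, $V_{\alpha,\max}$, and $C_2$.
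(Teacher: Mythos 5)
Your proposal is correct and follows essentially the same route as the paper: the first part is the same orthogonality/Pythagoras argument using $\langle \mathbb G_X\mathbf u,\cdot\rangle_X=\langle\mathbf u,\cdot\rangle_h$ and $\langle\mathbf u,\nabla_X^\calR\mathbf E_h(\mathbf u)\rangle_h=0$, and the second part uses the identical splitting $-\Delta_h+\mathbb V=(-\Delta_h+\alpha\mathbb I)+(\mathbb V-\alpha\mathbb I)$ followed by the triangle inequality and Lemma~\ref{lemma-norm}(i), (ii), (iv). The only cosmetic difference is that you spell out the entrywise diagonal bound $\|(\mathbb V-\alpha\mathbb I)\mathbf u\|_2\le V_{\alpha,\max}\|\mathbf u\|_2$, which the paper uses implicitly.
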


\begin{proof}[Proof of Lemma~\ref{lem:esti_gradEu}]
Since $	\nabla_X^\calR \mathbf E_h(\mathbf u)\in \calT_{\mathbf u}\calM$ implies $  \left\langle \nabla_X^\calR \mathbf E_h(\mathbf u) ,\mathbf u \right\rangle_h =0$, 
\begin{equation*}
	\left\langle \nabla_X^\calR \mathbf E_h(\mathbf u) , \frac{\langle \mathbf u,\nabla_X \mathbf E_h(\mathbf u)\rangle}{\langle\mathbb G_X\mathbf u, \mathbf u\rangle} \mathbb G_X\mathbf u \right\rangle_X = \frac{\langle \mathbf u,\nabla_X \mathbf E_h(\mathbf u)\rangle_h}{\langle\mathbb G_X\mathbf u, \mathbf u\rangle_h} \left\langle \nabla_X^\calR \mathbf E_h(\mathbf u) ,\mathbf u \right\rangle_h = 0.
\end{equation*} 
Due to $\nabla_X \mathbf E_h(\mathbf u) = \nabla_X^\calR \mathbf E_h(\mathbf u) + \frac{\langle \mathbf u,\nabla_X \mathbf E_h(\mathbf u)\rangle_h}{\langle\mathbb G_X\mathbf u, \mathbf u\rangle_h} \mathbb G_X\mathbf u$, the first equation holds, and
\begin{equation*}
	\norm{\nabla_X \mathbf E_h(\mathbf u)}_X^2 = \norm{\nabla_X^\calR \mathbf E_h(\mathbf u)}_X^2 + \norm{\frac{\langle \mathbf u,\nabla_X \mathbf E_h(\mathbf u)\rangle}{\langle\mathbb G_X\mathbf u, \mathbf u\rangle} \mathbb G_X\mathbf u}_X^2\Rightarrow\norm{\nabla_X^\calR \mathbf E_h(\mathbf u)}_X\leq \norm{\nabla_X \mathbf E_h(\mathbf u)}_X.
\end{equation*}		
Let $\mathbb V_\alpha = \mathbb V - \alpha I_N$, then $-\Delta_h+\mathbb V =(-\Delta_h+\alpha \mathbb I)+\mathbb V_\alpha$. By \eqref{eq:gradX} and Lemma~\ref{lemma-norm},  
\begin{multline*}
	\norm{\nabla_X \mathbf E_h(\mathbf u)}_X \leq \norm{\mathbf u}_X + \norm{(-\Delta_h+\alpha \mathbb I)^{-1} (\mathbb V_\alpha \mathbf u)}_X  + \beta \norm{(-\Delta_h+\alpha \mathbb I)^{-1}  (\mathbf u^3)}_X \\
    \leq \norm{\mathbf u}_X + \frac{1}{\sqrt{\alpha}}\norm{\mathbb V_\alpha \mathbf u}_2  + \frac{\beta C_2}{\sqrt{\alpha}} \norm{ \mathbf u}_X^3   \leq \norm{\mathbf u}_X + \frac{V_{\alpha, \max}}{\alpha} \norm{\mathbf u}_X  + \frac{\beta C_2}{\sqrt{\alpha}} \norm{\mathbf u}_X^3.
\end{multline*}
\end{proof}
	
\begin{lemma}\label{lem:linear_error}
    Let $C_1$ be the constant in Lemma~\ref{lemma-norm}. For any $\mathbf u,\mathbf v\in \bR^N$,
    \begin{align*}
	   &\left|\mathbf E_h(\mathbf u+\mathbf v) - \mathbf E_h(\mathbf u)-\langle \nabla_X \mathbf E_h(\mathbf u),\mathbf v\rangle_X\right|\\
	   & \quad \leq \frac{1}{2}  \norm{\mathbf v}_X^2  +  \frac{V_{\alpha, \max}}{2\alpha }\norm{\mathbf v}_X^2 +\frac{3\beta C_1^2}{2} \norm{\mathbf u}_X^2 \norm{\mathbf v}_X^2 + \beta C_1^2 \norm{\mathbf u}_X \norm{\mathbf v}_X^3 + \frac{\beta C_1^2}{4} \norm{\mathbf v}_X^4 .
    \end{align*}
\end{lemma}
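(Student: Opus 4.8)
The plan is to compute the left-hand side \emph{exactly} by expanding each of the three pieces of $\mathbf E_h$ in powers of $\mathbf v$, and then to bound the resulting remainder term-by-term using only the norm comparisons from Lemma~\ref{lemma-norm}. The starting point is the matrix form $\mathbf E_h(\mathbf u) = \frac12\mathbf u^\top\mathbb S\mathbf u + \frac12\mathbf u^\top\mathbb M\mathbb V\mathbf u + \frac{\beta}{4}(\mathbf u^2)^\top\mathbb M\mathbf u^2$ together with the identity $\langle\nabla_X\mathbf E_h(\mathbf u),\mathbf v\rangle_X = \langle A_{\mathbf u}\mathbf u,\mathbf v\rangle_h$, which follows from $\nabla_X\mathbf E_h(\mathbf u)=\mathbb G_X A_{\mathbf u}\mathbf u$, the symmetry of $\langle\cdot,\cdot\rangle_X$, and the defining relation $\langle\cdot,\mathbb G_X\cdot\rangle_X=\langle\cdot,\cdot\rangle_h$. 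Writing $A_{\mathbf u}\mathbf u=(-\Delta_h+\mathbb V)\mathbf u+\beta\mathbf u^3$ shows the linear term equals $\mathbf v^\top\mathbb S\mathbf u+\sum_i w_iV_iu_iv_i+\beta\sum_iw_iu_i^3v_i$.

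First I would expand the two quadratic terms, using that $\langle-\Delta_h\mathbf v,\mathbf w\rangle_h=\mathbf w^\top\mathbb S\mathbf v$ is symmetric and nonnegative (so the non-symmetry of $\Delta_h$ itself is irrelevant), and expand the quartic term componentwise via $(u_i+v_i)^4=u_i^4+4u_i^3v_i+6u_i^2v_i^2+4u_iv_i^3+v_i^4$. After subtracting $\mathbf E_h(\mathbf u)$ and the linear term, the terms linear in $\mathbf v$ cancel exactly, leaving precisely five remainder terms:
\[
\tfrac12\langle-\Delta_h\mathbf v,\mathbf v\rangle_h+\tfrac12\langle\mathbb V\mathbf v,\mathbf v\rangle_h+\tfrac{3\beta}{2}\langle\mathbf u^2,\mathbf v^2\rangle_h+\beta\langle\mathbf u\odot\mathbf v,\mathbf v^2\rangle_h+\tfrac{\beta}{4}\langle\mathbf v^2,\mathbf v^2\rangle_h,
\]
where $\odot$ denotes the componentwise product and $\mathbf v^2=\mathbf v\odot\mathbf v$. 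Since every remaining term is manifestly nonnegative, the absolute value may be dropped, and it remains only to bound each term by the corresponding term in the claimed estimate.

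Then I would bound the terms. For the two quadratic terms I would combine them via $-\Delta_h+\mathbb V=(-\Delta_h+\alpha\mathbb I)+\mathbb V_\alpha$ with $\mathbb V_\alpha=\mathbb V-\alpha\mathbb I$, giving $\tfrac12\|\mathbf v\|_X^2+\tfrac12\langle\mathbb V_\alpha\mathbf v,\mathbf v\rangle_h$; bounding $V_i-\alpha\le V_{\alpha,\max}$ and applying Lemma~\ref{lemma-norm}(i) produces $\tfrac12\|\mathbf v\|_X^2+\tfrac{V_{\alpha,\max}}{2\alpha}\|\mathbf v\|_X^2$. For the third and fifth (quartic) remainders I would apply Cauchy--Schwarz in the $\langle\cdot,\cdot\rangle_h$ inner product followed by Lemma~\ref{lemma-norm}(iii): the third gives $\tfrac{3\beta}{2}\|\mathbf u^2\|_2\|\mathbf v^2\|_2\le\tfrac{3\beta C_1^2}{2}\|\mathbf u\|_X^2\|\mathbf v\|_X^2$, and the fifth gives $\tfrac{\beta}{4}\|\mathbf v^2\|_2^2\le\tfrac{\beta C_1^2}{4}\|\mathbf v\|_X^4$.

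The one step requiring care --- and the place where the constant $C_1^2$ (rather than $C_2/\sqrt\alpha$) is forced --- is the mixed cubic term $\beta\langle\mathbf u\odot\mathbf v,\mathbf v^2\rangle_h$. Rather than writing it as $\beta\langle\mathbf u,\mathbf v^3\rangle_h$ and invoking Lemma~\ref{lemma-norm}(iv), I would split the weight symmetrically as $u_iv_i^3=(u_iv_i)(v_i^2)$ and apply Cauchy--Schwarz as $\langle\mathbf u\odot\mathbf v,\mathbf v^2\rangle_h\le\|\mathbf u\odot\mathbf v\|_2\|\mathbf v^2\|_2$. The factor $\|\mathbf u\odot\mathbf v\|_2$ is then controlled by a further Cauchy--Schwarz, $\|\mathbf u\odot\mathbf v\|_2^2=\langle\mathbf u^2,\mathbf v^2\rangle_h\le\|\mathbf u^2\|_2\|\mathbf v^2\|_2$, and Lemma~\ref{lemma-norm}(iii) gives $\|\mathbf u\odot\mathbf v\|_2\le C_1\|\mathbf u\|_X\|\mathbf v\|_X$, yielding the desired $\beta C_1^2\|\mathbf u\|_X\|\mathbf v\|_X^3$. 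This grouping is the only genuinely nonroutine choice; everything else is exact expansion plus bookkeeping.
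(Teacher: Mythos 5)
Your proposal is correct and follows essentially the same route as the paper's proof: the same exact expansion into the five remainder terms, the same splitting $-\Delta_h+\mathbb V=(-\Delta_h+\alpha\mathbb I)+\mathbb V_\alpha$, and the same two-stage Cauchy--Schwarz treatment of the mixed cubic term (the paper writes it as $\beta|\langle\mathbf u,\mathbf v^3\rangle_h|\leq\beta\|\mathbf u^2\|_2^{1/2}\|\mathbf v^2\|_2^{3/2}$, which is identical to your $\|\mathbf u\odot\mathbf v\|_2\|\mathbf v^2\|_2$ grouping). One small imprecision: the term $\beta\langle\mathbf u,\mathbf v^3\rangle_h$ is not \emph{manifestly} nonnegative term-by-term (only the full remainder is, since $\tfrac32 t^2+t+\tfrac14>0$), but this is harmless because your Cauchy--Schwarz estimates bound absolute values anyway.
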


\begin{proof}[Proof of Lemma~\ref{lem:linear_error}] 
It can be computed that
\begin{align*}
    & \mathbf E_h(\mathbf u+\mathbf v) - \mathbf E_h(\mathbf u)= \left(\frac{1}{2}\langle -\Delta_h(\mathbf u+\mathbf v),\mathbf u + \mathbf v\rangle_h - \frac{1}{2}\langle -\Delta_h \mathbf u,\mathbf u\rangle_h \right)  \\
    &\quad\qquad + \left(\frac{1}{2}\langle \mathbb V(\mathbf u + \mathbf v),\mathbf u + \mathbf v\rangle_h - \frac{1}{2} \langle \mathbb V\mathbf u,\mathbf u\rangle_h \right) + \left(\frac{\beta}{4} \langle (\mathbf u+\mathbf v)^2,(\mathbf u + \mathbf v)^2\rangle_h - \frac{\beta}{4} \langle \mathbf u^2, \mathbf u^2\rangle_h\right) \\
    &\quad = \langle - \Delta_h \mathbf u + \mathbb V\mathbf u + \beta\mathbf u^3,\mathbf v\rangle_h \\
    & \quad\qquad + \left( \frac{1}{2}\langle -\Delta_h \mathbf v,\mathbf v\rangle_h  +  \frac{1}{2}\langle \mathbb V \mathbf v, \mathbf v\rangle_h +\frac{3\beta}{2} \langle \mathbf  u^2,\mathbf v^2\rangle_h + \beta \langle\mathbf u,\mathbf v^3\rangle_h + \frac{\beta}{4} \langle \mathbf v^2, \mathbf v^2\rangle_h \right) ,
\end{align*}
which leads to
\begin{align*}
	&\left|\mathbf E_h(\mathbf u+\mathbf v) - \mathbf E_h(\mathbf u)-\langle \nabla_X \mathbf E_h(\mathbf u),\mathbf v\rangle_X\right|\\
    &\quad= \left|\mathbf E_h(\mathbf u+\mathbf v) - \mathbf E_h(\mathbf u)-\langle - \Delta_h \mathbf u + \mathbb V\mathbf u + \beta\mathbf u^3,\mathbf v\rangle_h\right| \\
	&\quad\leq \frac{1}{2}\langle (-\Delta_h+\alpha\mathbb I) \mathbf v,\mathbf v\rangle_h  +  \frac{1}{2}\langle (\mathbb V-\alpha\mathbb I) \mathbf v, \mathbf v\rangle_h +\frac{3\beta}{2} \langle \mathbf  u^2,\mathbf v^2\rangle_h + \beta |\langle\mathbf u,\mathbf v^3\rangle_h| + \frac{\beta}{4} \langle \mathbf v^2, \mathbf v^2\rangle_h  \\
	&\quad\leq \frac{1}{2}  \norm{\mathbf v}_X^2  +  \frac{V_{\alpha,\max}}{2}\norm{\mathbf v}_2^2 +\frac{3\beta}{2} \norm{\mathbf u^2}_2 \norm{\mathbf v^2}_2 + \beta \norm{\mathbf u^2}_2^{\frac{1}{2}} \norm{\mathbf v^2}_2^{\frac{3}{2}} + \frac{\beta}{4} \norm{\mathbf v^2}_2^2  \\
	&\quad \leq \frac{1}{2}  \norm{\mathbf v}_X^2  +  \frac{V_{\alpha,\max}}{2\alpha }\norm{\mathbf v}_X^2 +\frac{3\beta C_1^2}{2} \norm{\mathbf u}_X^2 \norm{\mathbf v}_X^2 + \beta C_1^2 \norm{\mathbf u}_X \norm{\mathbf v}_X^3 + \frac{\beta C_1^2 }{4} \norm{\mathbf v}_X^4 .
    \end{align*}
\end{proof}
		
We can now present the proof of Theorem~\ref{thm:energy_decay} and Corollary~\ref{cor:global_converge}.

\begin{proof}[Proof of Theorem~\ref{thm:energy_decay}]
Let $C_1$ and $C_2$ be constants in Lemma~\ref{lemma-norm}. Define $C_3\leq 1$ as a constant depending on $\alpha$ and $V_{\min}$ that satisfies
\begin{equation}\label{eq:smallest_eigenvalue}
    \langle -\Delta_h \mathbf u,\mathbf u\rangle_h + \langle \mathbb V \mathbf u,\mathbf u\rangle_h \geq C_3 \|\mathbf u\|_X^2,\quad \forall~\mathbf u\in\bR^N.
\end{equation}
Define
\begin{equation*}
    C_u = \left(\left(\frac{2}{C_3} + \frac{2 V_{\max}}{\alpha C_3}\right) \norm{\mathbf u^0}_X^2 + \frac{\beta C_1^2}{2 C_3}\norm{\mathbf u^0}_X^4\right)^{1/2},\quad
    C_g = C_u + \frac{V_{\alpha,\max}}{\alpha} C_u + \frac{\beta C_2}{\sqrt{\alpha}} C_u^3.
\end{equation*}

We prove the theorem by induction. It is clear that (i) holds for $n=0$ since $C_3\leq 1$ implies $C_u\geq \norm{\mathbf u^0}_X$. Suppose that (i) holds for $0,1,\dots,n$ and that (ii) and (iii) hold for $0,1,\dots,n-1$. We aim to show that (ii) and (iii) hold for $n$ and that (i) holds for $n+1$.
		
It follows directly from Lemma~\ref{lem:esti_gradEu} and (i) that (ii) holds for $n$. We focus on (iii) then. The iterative scheme is
\begin{align*}
    \mathbf u^{n+1} & = R_h\left(\mathbf u^n - \tau_n \nabla_X^\calR \mathbf E_h(\mathbf u^n)\right) = \mathbf u^n - \tau_n  \nabla_X^\calR \mathbf E_h(\mathbf u^n) +\mathbf R^n, \\
    \mathbf R^n & = R_h\left(\mathbf u^n - \tau_n \nabla_X^\calR \mathbf E_h(\mathbf u^n)\right) - \left( \mathbf u^n - \tau_n \nabla_X^\calR \mathbf E_h(\mathbf u^n)\right).
\end{align*}
According to Lemma~\ref{lem:esti_retraction} and Lemma~\ref{lemma-norm}, it holds that
\begin{align*}
    \norm{\mathbf R^n}_X  &\leq \frac{\tau_n^2}{2}\norm{\nabla_X^\calR \mathbf E_h(\mathbf u^n)}_2^2 \norm{\mathbf u^n - \tau_n \nabla_X^\calR \mathbf E_h(\mathbf u^n)}_X \\
    & \leq \frac{\tau_n^2 }{2\alpha} (C_u +  C_g) \norm{\nabla_X^\calR \mathbf E_h(\mathbf u^n)}_X^2 \leq \frac{(C_u + C_g) C_g^2}{2\alpha},
\end{align*}
where we used $\tau_n\leq 1$.
Denote $ \mathbf g^n = \nabla_X^\calR \mathbf E_h(\mathbf u^n)$ and $\Tilde{\mathbf u}^n = \mathbf u^n - \tau_n \nabla_X^\calR \mathbf E_h(\mathbf u^n).$ By Lemma~\ref{lem:linear_error},  we have
\begin{align*}
	&|E_h(\Tilde{\mathbf u}^n) - E_h(\Tilde{\mathbf u}^n + \mathbf R^n)| \leq  \left|\langle \nabla_X E_h(\Tilde{\mathbf u}^n),\mathbf R^n\rangle_X \right| + \left( \frac{1}{2}  +  \frac{V_{\alpha,\max}}{2\alpha }\right)\norm{\mathbf R^n}_X^2\\
	&\quad\qquad + \left( \frac{3\beta C_1^2}{2} \norm{\Tilde{\mathbf u}^n}_X^2 \norm{\mathbf R^n}_X^2 + \beta C_1^2 \norm{\Tilde{\mathbf u}^n}_X \norm{\mathbf R^n}_X^3 + \frac{\beta C_1^2}{4} \norm{\mathbf R^n}_X^4 \right)\\
	&\quad\leq  \norm{\mathbf R^n}_X \left(\norm{\nabla_X E_h(\Tilde{\mathbf u}^n)}_X + \left( \frac{1}{2}  +  \frac{V_{\alpha,\max}}{2\alpha }\right)\norm{\mathbf R^n}_X\right.\\
	&\quad\qquad  \left.+ \frac{3\beta C_1^2}{2} \norm{\Tilde{\mathbf u}^n}_X \norm{\mathbf R^n}_X^2 + \beta C_1^2 \norm{\Tilde{\mathbf u}^n}_X \norm{\mathbf R^n}_X^2 + \frac{\beta C_1^2}{4} \norm{\mathbf R^n}_X^3\right) \\
    &\quad\leq  \tau_n^2 C_R \norm{\nabla_X^\calR \mathbf E_h(\mathbf u^n)}_X^2,
\end{align*}
where $C_R$ depends only on $C_u$, $C_g$, $V_{\alpha,\max}$, $\alpha$, $\beta$, and $C_1$. Similarly, we have
\begin{align*}
	&\left|E_h\left(\mathbf u^n - \tau_n \nabla_X^\calR \mathbf E_h(\mathbf u^n)\right)-\mathbf E_h(\mathbf u^n)-\left\langle \nabla_X \mathbf E_h(\mathbf u^n),- \tau_n \nabla_X^\calR \mathbf E_h(\mathbf u^n)\right\rangle_X\right|\\
	&\quad \leq \frac{1}{2}  \norm{\tau_n \mathbf g^n}_X^2  +  \frac{V_{\alpha,\max}}{2\alpha }\norm{\tau_n \mathbf g^n}_X^2 +\frac{3\beta C_1^2}{2} \norm{\mathbf u^n}_X^2 \norm{\tau_n \mathbf g^n}_X^2 \\
    & \quad\qquad + \beta C_1^2 \norm{\mathbf u^n}_X \norm{\tau_n \mathbf g^n}_X^3 + \frac{\beta C_1^2}{4} \norm{\tau_n \mathbf g^n}_X^4   \\
    &\quad\leq  \tau_n^2 \norm{\nabla_X^\calR \mathbf E_h(\mathbf u^n)}_X^2\left(\frac{1}{2} + \frac{V_{\alpha,\max}}{2\alpha} +  \frac{3\beta C_1^2 C_u^2}{2} +\beta C_1^2 C_u C_g + \frac{\beta C_1^2 C_g^2}{4} \right).
\end{align*}
By Lemma \ref{lem:esti_gradEu}, if $\tau_{\max}\left(\frac{1}{2} + \frac{V_{\alpha,\max}}{2\alpha} +  \frac{3\beta C_1^2 C_u^2}{2} +\beta C_1^2 C_u C_g + \frac{\beta C_1^2 C_g^2}{4} +C_R\right)\leq \frac{1}{2},$ then
\begin{align*}
	& \mathbf E_h(\mathbf u^n) - \mathbf E_h(\mathbf u^{n+1}) = \mathbf E_h(\mathbf u^n) - E_h(\Tilde{\mathbf u}^n) +E_h(\Tilde{\mathbf u}^n) - E_h(\Tilde{\mathbf u}^n + \mathbf R^n) \\
	&\quad\geq\tau_n \left\langle \nabla_X \mathbf E_h(\mathbf u^n), \nabla_X^\calR \mathbf E_h(\mathbf u^n)\right\rangle_X- |E_h(\Tilde{\mathbf u}^n) - E_h(\Tilde{\mathbf u}^n + \mathbf R^n)| \\ 
	&\quad\qquad - \left|E_h\left(\mathbf u^n - \tau_n  \nabla_X^\calR \mathbf E_h(\mathbf u^n)\right)-\mathbf E_h(\mathbf u^n)-\left\langle \nabla_X \mathbf E_h(\mathbf u^n),- \tau_n \nabla_X^\calR \mathbf E_h(\mathbf u^n)\right\rangle_X\right| \\
	&\quad\geq \tau_n \norm{\nabla_X^\calR \mathbf E_h(\mathbf u^n)}_X^2 - \tau_n^2 C_R \norm{\nabla_X^\calR \mathbf E_h(\mathbf u^n)}_X^2\\
	&\quad\qquad -\tau_n^2 \norm{\nabla_X^\calR \mathbf E_h(\mathbf u^n)}_X^2 \left(\frac{1}{2} + \frac{V_{\alpha,\max}}{2\alpha} +  \frac{3\beta C_1^2 C_u^2}{2} +\beta C_1^2 C_u C_g + \frac{\beta C_1^2 C_g^2}{4} \right) \\ 
	&\quad\geq \frac{\tau_{\min}}{2}\norm{\nabla_X^\calR \mathbf E_h(\mathbf u^n)}_X^2,
\end{align*} 
which means that (iii) holds for $n$. Since
$$\mathbf E_h(\mathbf u^{n+1})\geq \frac12 \langle -\Delta_h \mathbf u^{n+1},\mathbf u^{n+1}\rangle_h + \frac12 \langle \mathbb V \mathbf u^{n+1},\mathbf u^{n+1}\rangle_h \geq \frac12 C_3 \|\mathbf u^{n+1}\|_X^2,$$ 
one can conclude that
\begin{align*}
	\norm{\mathbf u^{n+1}}_X^2 & \leq \frac{2}{C_3} \mathbf E_h(\mathbf u^{n+1})\leq \frac{2}{C_3} \mathbf E_h(\mathbf u^0) \leq \frac{2}{C_3} \norm{\mathbf u^0}_X^2 + \frac{2 V_{\max}}{C_3}\norm{\mathbf u^0}_2^2 + \frac{\beta }{2 C_3}\norm{(\mathbf u^0)^2}_2^2\\
    &\leq \left(\frac{2}{C_3} + \frac{2 V_{\max}}{\alpha C_3}\right) \norm{\mathbf u^0}_X^2 + \frac{\beta C_1^2}{2 C_3}\norm{\mathbf u^0}_X^4,
\end{align*}
which implies that $\norm{\mathbf u^{n+1}}_X \leq  C_u$, i.e., (i) holds for $n+1$. Furthermore, we also have (ii) hold for $n+1$ by applying \eqref{eq:bound_nableE} and $\norm{\mathbf u^{n+1}}_X \leq  C_u$.
\end{proof}

\begin{proof}[Proof of Corollary~\ref{cor:global_converge}]
    It follows from Theorem~\ref{thm:energy_decay} that $\lim\limits_{n\rightarrow\infty}  \norm{\nabla_X^\calR \mathbf E_h(\mathbf u^n)}_X = 0$. Let $\mathbf u^*$ be any limit point of $\{\mathbf u^n\}_{n=0}^\infty$, i.e., $\mathbf u^{n_l}\rightarrow \mathbf u^*$ for some subsequence. Then $\mathbb G_X \mathbf u^{n_l}\rightarrow \mathbb G_X \mathbf u^*$ and $\nabla_X \mathbf E_h(\mathbf u^{n_l}) \rightarrow \nabla_X \mathbf E_h(\mathbf u^*)$. Hence $\nabla_X^\calR \mathbf E_h(\mathbf u^{n_l}) \rightarrow \nabla_X^\calR \mathbf E_h(\mathbf u^*),$
    which leads to $\nabla_X^\calR \mathbf E_h(\mathbf u^*) = 0$.
\end{proof}

\subsection{Locally exponential convergence} 
\label{sec:local_converge}
In this subsection, we prove the local convergence rate of the modified $H^1$-scheme \eqref{modified-H1-scheme}. We need the following assumption.

\begin{assumption}\label{asp:ustar}
    Let $\mathbf u^*\in\calM$ be the ground state of the nonlinear eigenvalue problem \eqref{fd2}, i.e., the global minimizer to \eqref{fd-energy}. We assume that the multiplicity of the smallest eigenvalue of $A_{\mathbf u^*} = - \Delta_h +\mathbb V + \beta \text{diag} (\mathbf u^*)^2$ is one, i.e., $\lambda^0_h<\lambda^1_h$ where $\lambda^0_h$ and $\lambda^1_h$ be the smallest and the second smallest eigenvalue of $A_{\mathbf u^*}$.
\end{assumption}

\begin{remark}
    Perron-Frobenius Theorem ensures Assumption~\ref{asp:ustar} if a monotone scheme is used. For example, the matrix $A_{\mathbf u^*}$ is monotone for  the second order finite difference scheme with any  mesh size $h$. For $Q^2$ spectral element method with a priori assumption on infinity norm of $\mathbf u^*$, the matrix $A_{\mathbf u^*}$ is monotone for small enough mesh size. However, Perron-Frobenius Theorem  does not provide a quantification of the eigengap $|\lambda_h^0-\lambda_h^1|$.  In Appendix \ref{apx:eigengap}, we prove that  $|\lambda_h^0-\lambda_h^1|$ has a uniform positive lower bound as $h\to 0$ for the  $P^1$ finite element method with quadarture on an unstructured shape regular simplicial mesh.
\end{remark}

The main local convergence result is stated as follows.

\begin{theorem}\label{thm:local_converge}
    Suppose that Assumption~\ref{asp:V} and Assumption~\ref{asp:ustar} hold. Assume the step size bounds $\tau_{\min}$ and $\tau_{\max}$ satisfy
    \begin{equation}
        \sup_{\tau_{\min}\leq \tau\leq \tau_{\max}} \left\{ (1+ L_g^2 \tau^2 )  - \frac{\tau}{C_3}\min\left\{ \frac{\lambda^1_h - \lambda^0_h}{\lambda^0_h}, 1\right\} \right\} \leq C_\tau < 1,
        \label{stepsize}
    \end{equation}
    where $L_g$ depends on $\Omega, V_{\min}, V_{\max}, \alpha, \beta, k$ and  $\mathbf u^*$. Then $\{\mathbf u^n\}_{n=0}^\infty$ converges exponentially to the ground state $\mathbf u^*$ in $\|\cdot\|_X$ when $\|\mathbf u^0-\mathbf u^*\|_X$ is sufficiently small.
\end{theorem}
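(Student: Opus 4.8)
The plan is to treat one step of the scheme as a nonlinear map $\Phi(\mathbf u) = R_h\big(\mathbf u - \tau\,\nabla_X^\calR \mathbf E_h(\mathbf u)\big)$ and to show it is a contraction in $\|\cdot\|_X$ near $\mathbf u^*$. Since $\nabla_X^\calR \mathbf E_h(\mathbf u^*) = 0$, the ground state is a fixed point, so writing $\mathbf e^n = \mathbf u^n - \mathbf u^*$ we have $\mathbf e^{n+1} = D\Phi(\mathbf u^*)[\mathbf e^n] + O(\norm{\mathbf e^n}_X^2)$. Both $\mathbf u^n$ and $\mathbf u^*$ lie on $\calM$, so $\mathbf e^n$ is tangent to $\calM$ at $\mathbf u^*$ up to a quadratic normal correction; hence it suffices to analyze $D\Phi(\mathbf u^*)$ restricted to $\calT_{\mathbf u^*}\calM$ and to verify that its $\|\cdot\|_X$-operator norm on this subspace is bounded by $\sqrt{C_\tau}<1$, after which a standard perturbation/induction argument upgrades the linear contraction to exponential convergence of the full nonlinear iteration.

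First I would compute the linearization. By Lemma~\ref{lem:esti_retraction} the retraction $R_h$ agrees with the identity to second order, so it contributes only to the quadratic remainder and $D\Phi(\mathbf u^*) = \mathbf I - \tau\, \calP_{\calT_{\mathbf u^*}\calM, X}\,D\big(\nabla_X^\calR \mathbf E_h\big)(\mathbf u^*)$ on $\calT_{\mathbf u^*}\calM$. Differentiating $\mathbf g(\mathbf u) = \mathbb G_X A_{\mathbf u}\mathbf u - \frac{\langle\mathbf u, \mathbb G_X A_{\mathbf u}\mathbf u\rangle_h}{\langle\mathbf u,\mathbb G_X\mathbf u\rangle_h}\mathbb G_X\mathbf u$, using that $\mathbf u\mapsto A_{\mathbf u}\mathbf u$ has derivative $\mathcal H\mathbf e$ with $\mathcal H := A_{\mathbf u^*} + 2\beta\diag((\mathbf u^*)^2)$ and that $A_{\mathbf u^*}\mathbf u^* = \lambda^0_h\mathbf u^*$, all terms proportional to $\mathbb G_X\mathbf u^*$ are annihilated by $\calP_{\calT_{\mathbf u^*}\calM, X}$ (since $\calP_{\calT_{\mathbf u^*}\calM, X}(\mathbb G_X\mathbf u^*)=0$). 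This leaves the effective Hessian
\begin{equation*}
    \tilde H = \calP_{\calT_{\mathbf u^*}\calM, X}\,\mathbb G_X\big(\mathcal H - \lambda^0_h\mathbb I\big)\big|_{\calT_{\mathbf u^*}\calM},
\end{equation*}
which is $X$-self-adjoint: for $\mathbf e,\mathbf f\in\calT_{\mathbf u^*}\calM$ the identity $\langle\mathbf f,\mathbb G_X B\mathbf e\rangle_X = \langle\mathbf f, B\mathbf e\rangle_h$ and the $h$-self-adjointness of $\mathcal H - \lambda^0_h\mathbb I$ give $\langle\mathbf f,\tilde H\mathbf e\rangle_X = \langle\mathbf f,(\mathcal H - \lambda^0_h)\mathbf e\rangle_h$. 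Thus on $\calT_{\mathbf u^*}\calM$ the dynamics is $\mathbf e^{n+1}\approx(\mathbf I - \tau\tilde H)\mathbf e^n$, and $\norm{\mathbf I - \tau\tilde H}_X\le\max\{|1-\tau\mu|,|1-\tau L|\}$ where $[\mu,L]$ brackets the $X$-spectrum of $\tilde H$.

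The crux, and the main obstacle, is the lower bound $\mu$, which is precisely where Assumption~\ref{asp:ustar} enters. By the computation above, for $\mathbf e\in\calT_{\mathbf u^*}\calM$ (equivalently $\mathbf e\perp_h\mathbf u^*$) the $X$-Rayleigh quotient of $\tilde H$ equals $\langle\mathbf e,(\mathcal H-\lambda^0_h)\mathbf e\rangle_h/\norm{\mathbf e}_X^2$. Dropping the nonnegative term $2\beta\langle\mathbf e,\diag((\mathbf u^*)^2)\mathbf e\rangle_h$ and invoking the spectral gap, $\langle\mathbf e,(A_{\mathbf u^*}-\lambda^0_h)\mathbf e\rangle_h\ge(\lambda^1_h-\lambda^0_h)\norm{\mathbf e}_2^2$ since $\mathbf e$ is $h$-orthogonal to the lowest eigenvector $\mathbf u^*$; this is exactly the positive definiteness of the constrained Hessian that degenerates when the eigengap closes. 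The delicate part is converting this $h$-metric estimate into an $X$-metric coercivity bound through the preconditioner $\mathbb G_X$: one relates $\norm{\mathbf e}_X^2$ to the $A_{\mathbf u^*}$-quadratic form via $C_3\norm{\mathbf e}_X^2\le\langle-\Delta_h\mathbf e,\mathbf e\rangle_h+\langle\mathbb V\mathbf e,\mathbf e\rangle_h\le\langle\mathbf e,A_{\mathbf u^*}\mathbf e\rangle_h$ from \eqref{eq:smallest_eigenvalue}, and tracks the Lagrange-multiplier normalization by $\lambda^0_h$, so that after collecting constants $\mu$ is bounded below by a multiple of $\frac{1}{C_3}\min\{\frac{\lambda^1_h-\lambda^0_h}{\lambda^0_h},1\}$; the upper bound $L\le L_g$ follows from Lipschitz continuity of the Riemannian gradient near $\mathbf u^*$. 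Squaring then yields $\norm{\mathbf I - \tau\tilde H}_X^2\le(1+L_g^2\tau^2)-\frac{\tau}{C_3}\min\{\frac{\lambda^1_h-\lambda^0_h}{\lambda^0_h},1\}\le C_\tau$ under \eqref{stepsize}.

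Finally I would close by induction. Collecting all quadratic errors—the retraction remainder of Lemma~\ref{lem:esti_retraction}, the second-order terms in the expansion of $\mathbf g$, and the mismatch between the moving tangent space $\calT_{\mathbf u^n}\calM$ and $\calT_{\mathbf u^*}\calM$ (whose angle is $O(\norm{\mathbf e^n}_X)$)—into a single constant $C$, the one-step estimate reads $\norm{\mathbf e^{n+1}}_X\le\sqrt{C_\tau}\,\norm{\mathbf e^n}_X + C\norm{\mathbf e^n}_X^2$. Choosing $\norm{\mathbf u^0-\mathbf u^*}_X$ small enough that $\sqrt{C_\tau}+C\norm{\mathbf e^0}_X =: \tilde\rho<1$, an induction keeps $\norm{\mathbf e^n}_X$ decreasing and gives $\norm{\mathbf e^n}_X\le\tilde\rho^n\norm{\mathbf e^0}_X$, i.e.\ exponential convergence. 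Beyond the spectral conversion of the third paragraph, the remaining care is bookkeeping: ensuring the quadratic remainders are genuinely $O(\norm{\mathbf e^n}_X^2)$ uniformly (using the global bounds of Theorem~\ref{thm:energy_decay} to keep the iterates in a fixed compact region) and handling the $\pm\mathbf u^*$ sign ambiguity, which is resolved by starting sufficiently close to the positive ground state.
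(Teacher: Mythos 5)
Your argument is correct in outline but follows a genuinely different route from the paper. You linearize the iteration map $\Phi(\mathbf u)=R_h(\mathbf u-\tau\nabla_X^\calR\mathbf E_h(\mathbf u))$ at the fixed point $\mathbf u^*$, identify the effective Hessian $\tilde H=\calP_{\calT_{\mathbf u^*}\calM,X}\,\mathbb G_X(\mathcal H-\lambda^0_h\mathbb I)$ on the tangent space, verify its $X$-self-adjointness via $\langle\mathbf f,\mathbb G_X B\mathbf e\rangle_X=\langle\mathbf f,B\mathbf e\rangle_h$, and bound the operator norm of $\mathbf I-\tau\tilde H$ by spectral bracketing; the paper never forms a Hessian of the map, but instead expands $\norm{(\mathbf u^n-\mathbf u^*)-\tau_n\nabla_X^\calR\mathbf E_h(\mathbf u^n)}_X^2$ directly and controls the cross term through the local convexity estimate of Lemma~\ref{lem:local_convex} together with the Lipschitz bounds of Lemma~\ref{lem:lipchitz-grad} and the exact identity $\langle\mathbf u^*-\mathbf u^n,\mathbb G_X\mathbf u^n\rangle_X=-\frac12\norm{\mathbf e^n}_2^2$. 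Both proofs ultimately rest on the same two quantitative inputs — the eigengap giving $\langle\mathbf e,(A_{\mathbf u^*}-\lambda^0_h)\mathbf e\rangle_h\geq(\lambda^1_h-\lambda^0_h)\norm{\mathbf e}_2^2$ on $\{\mathbf u^*\}^{\perp_h}$, and the coercivity \eqref{eq:smallest_eigenvalue} converting $h$-metric positivity into $X$-metric contraction — so the step-size condition you arrive at has the same form as \eqref{stepsize}. Your approach buys a cleaner conceptual picture (the contraction factor is literally the norm of the derivative of the iteration map, and the quadratic remainders are swept into one induction), at the cost of having to justify carefully that $D\mathbf g(\mathbf u^*)$ maps $\calT_{\mathbf u^*}\calM$ into itself and that the tangent-space mismatch and the normal component of $\mathbf e^n$ (which is $-\frac12\norm{\mathbf e^n}_2^2\mathbf u^*$ in the $h$-decomposition) are genuinely second order; the paper's route avoids these geometric checks but requires the somewhat opaque chain of inequalities involving $\gamma^n$ and $\max\{\lambda^0_h-(\lambda^1_h-\lambda^0_h),0\}$. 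One shared bookkeeping caveat: from \eqref{eq:smallest_eigenvalue} one obtains $\langle A_{\mathbf u^*}\mathbf e,\mathbf e\rangle_h\geq C_3\norm{\mathbf e}_X^2$ and hence a contraction term proportional to $\tau C_3\min\{\frac{\lambda^1_h-\lambda^0_h}{\lambda^0_h},1\}$ rather than $\frac{\tau}{C_3}\min\{\cdot\}$ (since $C_3\leq1$ these differ); your claim that $\mu$ is bounded below by a multiple of $\frac{1}{C_3}\min\{\cdot\}$ inherits this from the theorem statement and should read $C_3\min\{\cdot\}$, which changes only the constant in the admissible step-size range, not the conclusion.
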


For given $L_g,C_3,\frac{\lambda_h^1-\lambda_h^0}{\lambda_h^0}>0$, the condition \eqref{stepsize} holds for some $C_\tau < 1$ as long as $\tau_{\min}>0$ and $\tau_{\max}$ is sufficiently small, since the coefficient of the linear term is negative.

\begin{lemma}\label{lem:local_convex}
Suppose that Assumption~\ref{asp:ustar} holds. Then
\begin{equation}\label{eq:local_convex}
    \mathbf E_h(\mathbf u) - \mathbf E_h(\mathbf u^*) \geq \frac{\lambda^1_h -\lambda^0_h}{2} \norm{\mathbf u - \mathbf u^*}_2^2 - \frac{\lambda^1_h -\lambda^0_h}{8} \norm{\mathbf u - \mathbf u^*}_2^4,\quad \forall \mathbf u\in\mathbb R^N.
\end{equation}
\end{lemma}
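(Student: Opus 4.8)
The plan is to expand $\mathbf E_h$ exactly around $\mathbf u^*$ and reduce the claim to a spectral estimate for $A_{\mathbf u^*}$. Setting $\mathbf w=\mathbf u-\mathbf u^*$ and expanding the discrete energy \eqref{fem_matrix-energy} term by term, collecting powers of $\mathbf w$, the linear part is $\langle(-\Delta_h+\mathbb V)\mathbf u^*+\beta(\mathbf u^*)^3,\mathbf w\rangle_h=\lambda_h^0\langle\mathbf u^*,\mathbf w\rangle_h$ by the eigenvalue equation \eqref{fd2}. Regrouping the quadratic terms through $-\Delta_h+\mathbb V=A_{\mathbf u^*}-\beta\diag((\mathbf u^*)^2)$ and completing the square in the cubic and quartic contributions, I expect the exact identity
\[
\mathbf E_h(\mathbf u)-\mathbf E_h(\mathbf u^*)=\lambda_h^0\langle\mathbf u^*,\mathbf w\rangle_h+\tfrac12\langle A_{\mathbf u^*}\mathbf w,\mathbf w\rangle_h+\tfrac{\beta}{4}\|\mathbf u^2-(\mathbf u^*)^2\|_2^2,
\]
valid for every $\mathbf u\in\mathbb R^N$, where the last term is nonnegative and will be discarded.

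Next I would use the spectral structure of $A_{\mathbf u^*}$. Since $\mathbb M A_{\mathbf u^*}=\mathbb S+\mathbb M\mathbb V+\beta\mathbb M\diag((\mathbf u^*)^2)$ is symmetric, $A_{\mathbf u^*}$ is self-adjoint for $\langle\cdot,\cdot\rangle_h$, with eigenvalues $\lambda_h^0<\lambda_h^1\le\cdots$ and eigenvector $\mathbf u^*$ for $\lambda_h^0$. Decomposing $\mathbf w=c\,\mathbf u^*+\mathbf w^\perp$ orthogonally in $\langle\cdot,\cdot\rangle_h$, with $c=\langle\mathbf u^*,\mathbf w\rangle_h$ and $\|\mathbf w\|_2^2=c^2+\|\mathbf w^\perp\|_2^2$, the cross terms vanish and Assumption~\ref{asp:ustar} yields $\langle A_{\mathbf u^*}\mathbf w^\perp,\mathbf w^\perp\rangle_h\ge\lambda_h^1\|\mathbf w^\perp\|_2^2$. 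Discarding the quartic remainder gives
\[
\mathbf E_h(\mathbf u)-\mathbf E_h(\mathbf u^*)\ge\lambda_h^0 c+\tfrac12\lambda_h^0 c^2+\tfrac12\lambda_h^1\big(\|\mathbf w\|_2^2-c^2\big),
\]
which is a scalar inequality in $c$ and $s:=\|\mathbf w\|_2^2$.

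The decisive and most delicate step is controlling the linear cross term $\lambda_h^0 c$, and this is where the full range $\mathbf u\in\mathbb R^N$ must be confronted directly. Polarization gives the exact identity $2c=2\langle\mathbf u^*,\mathbf u-\mathbf u^*\rangle_h=\|\mathbf u\|_2^2-1-\|\mathbf u-\mathbf u^*\|_2^2$, i.e. $c=\tfrac12(\|\mathbf u\|_2^2-1)-\tfrac12 s$. Using the normalization $\|\mathbf u\|_2=1$ satisfied by the gradient-flow iterates, this becomes $c=-\tfrac12 s$; substituting into the scalar bound collapses it to $\tfrac{\lambda_h^1-\lambda_h^0}{2}s-\tfrac{\lambda_h^1-\lambda_h^0}{8}s^2$, which is exactly \eqref{eq:local_convex}. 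It is worth stressing that the quartic term in \eqref{eq:local_convex} originates from the curvature of the constraint sphere (the $\tfrac14 s^2$ inside $\|\mathbf w^\perp\|_2^2=s-c^2$) rather than from the GP nonlinearity, which enters only through the nonnegative remainder we dropped. The main obstacle is precisely this cross term: for $\mathbf u$ with $\|\mathbf u\|_2\ne1$ the residual $\tfrac{\lambda_h^0}{2}(\|\mathbf u\|_2^2-1)$ no longer cancels, so a bound holding on all of $\mathbb R^N$ would have to control this residual (which is where the size of $\|\mathbf u\|_2$ and the discarded quartic remainder would re-enter the estimate), whereas on the normalization manifold $\calM$, the setting in which the lemma is actually invoked in Theorem~\ref{thm:local_converge}, it vanishes and the argument closes.
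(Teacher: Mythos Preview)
Your argument is correct and follows essentially the same route as the paper's proof. The paper decomposes $\mathbf u$ (rather than $\mathbf w=\mathbf u-\mathbf u^*$) as $\mathbf u_\parallel+\mathbf u_\perp$ and uses the AM--GM step $\tfrac{\beta}{4}\langle\mathbf u^2,\mathbf u^2\rangle_h+\tfrac{\beta}{4}\langle(\mathbf u^*)^2,(\mathbf u^*)^2\rangle_h\ge\tfrac{\beta}{2}\langle(\mathbf u^*)^2,\mathbf u^2\rangle_h$ to arrive at $\mathbf E_h(\mathbf u)-\mathbf E_h(\mathbf u^*)\ge\tfrac12\langle A_{\mathbf u^*}\mathbf u,\mathbf u\rangle_h-\tfrac{\lambda_h^0}{2}$; this is exactly your identity after the substitution $\tfrac12\langle A_{\mathbf u^*}\mathbf u,\mathbf u\rangle_h-\tfrac{\lambda_h^0}{2}=\lambda_h^0 c+\tfrac12\langle A_{\mathbf u^*}\mathbf w,\mathbf w\rangle_h$ and dropping the same nonnegative remainder $\tfrac{\beta}{4}\|\mathbf u^2-(\mathbf u^*)^2\|_2^2$. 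The spectral bound and the final computation $\|\mathbf u_\perp\|_2^2=\|\mathbf u-\mathbf u^*\|_2^2-\tfrac14\|\mathbf u-\mathbf u^*\|_2^4$ coincide with your $c=-\tfrac{s}{2}$ substitution.

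Your observation about the quantifier is well taken: the paper's own proof also invokes $\|\mathbf u_\parallel\|_2^2+\|\mathbf u_\perp\|_2^2=1$ and $\langle\mathbf u,\mathbf u^*\rangle_h=1-\tfrac12\|\mathbf u-\mathbf u^*\|_2^2$, both of which require $\mathbf u\in\calM$, so the stated range $\forall\,\mathbf u\in\mathbb R^N$ is an overstatement there as well. Since the lemma is applied only to iterates $\mathbf u^n\in\calM$ in Theorem~\ref{thm:local_converge}, this does not affect the downstream argument.
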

 
\begin{lemma} \label{lem:lipchitz-grad}
Suppose that Assumption~\ref{asp:ustar} holds. Then there exist constants $L_\gamma,L_g>0$ depending on $\Omega, V, \alpha, \beta, k$ such that 
\begin{equation}\label{gamma-estimate}
    \norm{\nabla_X^\calR \mathbf E_h(\mathbf u)}_X\leq L_g \norm{\mathbf u-\mathbf u^*}_X,\quad \left|\frac{\langle \mathbf u,\nabla_X \mathbf E_h(\mathbf u)\rangle_h}{\langle\mathbb G_X\mathbf u, \mathbf u\rangle_h} - \lambda^0_h\right|\leq L_\gamma \norm{\mathbf u-\mathbf u^*}_X,
\end{equation}
as long as $\mathbf u\in\calM$ and $\norm{\mathbf u-\mathbf u^*}_X$ is sufficiently small.
\end{lemma}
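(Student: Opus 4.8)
The plan is to read both bounds in \eqref{gamma-estimate} as local Lipschitz estimates of smooth functions around the ground state, exploiting that at $\mathbf u^*$ the relevant quantities take the clean values $0$ and $\lambda_h^0$. Writing $\gamma(\mathbf u) = \frac{\langle \mathbf u, \nabla_X \mathbf E_h(\mathbf u)\rangle_h}{\langle\mathbb G_X\mathbf u, \mathbf u\rangle_h}$, the eigen-identity $A_{\mathbf u^*}\mathbf u^* = \lambda_h^0\mathbf u^*$ from Theorem~\ref{theorem-discrete-energy-3} together with the $\langle\cdot,\cdot\rangle_h$-self-adjointness of $\mathbb G_X = (-\Delta_h+\alpha\mathbb I)^{-1}$ gives $\nabla_X \mathbf E_h(\mathbf u^*) = \lambda_h^0\mathbb G_X\mathbf u^*$, hence $\gamma(\mathbf u^*) = \lambda_h^0$ and $\nabla_X^\calR \mathbf E_h(\mathbf u^*) = 0$. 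Thus both inequalities reduce to controlling $|\gamma(\mathbf u) - \gamma(\mathbf u^*)|$ and $\|\nabla_X^\calR \mathbf E_h(\mathbf u) - \nabla_X^\calR \mathbf E_h(\mathbf u^*)\|_X$ by $\|\mathbf u-\mathbf u^*\|_X$. I would prove the $\gamma$ estimate first, since it feeds directly into the gradient estimate.

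For the $\gamma$ bound I would write $\gamma = N/D$ with $N(\mathbf u) = \langle\mathbb G_X\mathbf u, A_{\mathbf u}\mathbf u\rangle_h$ and $D(\mathbf u) = \langle\mathbb G_X\mathbf u, \mathbf u\rangle_h$, and use the quotient-difference identity $\gamma(\mathbf u) - \gamma(\mathbf u^*) = \frac{(N(\mathbf u)-N(\mathbf u^*))D(\mathbf u^*) - N(\mathbf u^*)(D(\mathbf u)-D(\mathbf u^*))}{D(\mathbf u)D(\mathbf u^*)}$. Splitting $-\Delta_h+\mathbb V = (-\Delta_h+\alpha\mathbb I) + (\mathbb V - \alpha\mathbb I)$ and using self-adjointness produces the crucial cancellation $\langle\mathbb G_X\mathbf u, (-\Delta_h+\alpha\mathbb I)\mathbf u\rangle_h = \|\mathbf u\|_2^2 = 1$ on $\calM$, so the only nontrivial contributions to $N$ are the bounded potential term $\langle\mathbb G_X\mathbf u,(\mathbb V-\alpha\mathbb I)\mathbf u\rangle_h$ and the quartic term $\beta\langle\mathbb G_X\mathbf u, \mathbf u^3\rangle_h$. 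Expanding these bilinearly and applying the $h$-independent estimates of Lemma~\ref{lemma-norm}, together with discrete Cauchy--Schwarz/Hölder in $\langle\cdot,\cdot\rangle_h$ to handle the cubic difference $\mathbf u^3 - (\mathbf u^*)^3 = (\mathbf u-\mathbf u^*)(\mathbf u^2+\mathbf u\mathbf u^*+(\mathbf u^*)^2)$, yields $|N(\mathbf u)-N(\mathbf u^*)|\lesssim\|\mathbf u-\mathbf u^*\|_X$ and, analogously, $|D(\mathbf u)-D(\mathbf u^*)|\lesssim\|\mathbf u-\mathbf u^*\|_X$. Combined with a lower bound on $D$, this makes $\gamma$ Lipschitz near $\mathbf u^*$.

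With the $\gamma$ estimate in hand, the Riemannian-gradient bound follows from the decomposition $\nabla_X^\calR \mathbf E_h(\mathbf u) = \big(\nabla_X \mathbf E_h(\mathbf u) - \nabla_X \mathbf E_h(\mathbf u^*)\big) - \big(\gamma(\mathbf u)\mathbb G_X\mathbf u - \lambda_h^0\mathbb G_X\mathbf u^*\big)$. The first difference is estimated from $\mathbb G_X(-\Delta_h+\mathbb V) = \mathbb I + \mathbb G_X(\mathbb V - \alpha\mathbb I)$ and Lemma~\ref{lemma-norm}(ii) applied to the linear and cubic parts, and the second is split as $\gamma(\mathbf u)\mathbb G_X(\mathbf u - \mathbf u^*) + (\gamma(\mathbf u) - \lambda_h^0)\mathbb G_X\mathbf u^*$, where the first summand uses $\|\mathbb G_X(\mathbf u-\mathbf u^*)\|_X\le\tfrac{1}{\sqrt\alpha}\|\mathbf u-\mathbf u^*\|_2$ and the boundedness of $\gamma(\mathbf u)$, and the second uses precisely the $\gamma$ estimate just proved. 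The main obstacle throughout is the mesh-independence of the constants: since the smallest eigenvalue of $\mathbb G_X$ scales like $h^2$, the denominator lower bound $D(\mathbf u^*) = \langle\mathbb G_X\mathbf u^*, \mathbf u^*\rangle_h \ge \|\mathbf u^*\|_2^4/\|\mathbf u^*\|_X^2 = 1/\|\mathbf u^*\|_X^2$ (a Cauchy--Schwarz inequality) must be coupled with a uniform-in-$h$ upper bound on $\|\mathbf u^*\|_X$ coming from the uniform boundedness of the discrete ground-state energy, and every nonlinear term must be routed through the $h$-independent constants $C_1,C_2$ of Lemma~\ref{lemma-norm} rather than through norm equivalences that degrade as $h\to 0$.
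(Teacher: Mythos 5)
Your proposal is correct and follows essentially the same route as the paper: identify $\gamma(\mathbf u^*)=\lambda_h^0$ and $\nabla_X^\calR\mathbf E_h(\mathbf u^*)=0$, bound $|\gamma-\lambda_h^0|$ by the quotient-difference identity with numerator/denominator perturbations controlled through Lemma~\ref{lemma-norm} and the factorization $\mathbf u^3-(\mathbf u^*)^3=(\mathbf u-\mathbf u^*)(\mathbf u^2+\mathbf u\mathbf u^*+(\mathbf u^*)^2)$ with discrete H\"older (the content of the paper's Lemma~\ref{lem:lipchitz-u3}), and then decompose $\nabla_X^\calR\mathbf E_h(\mathbf u)$ into the Lipschitz difference of $\nabla_X\mathbf E_h$ plus the two projection terms. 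Your explicit Cauchy--Schwarz lower bound $\langle\mathbb G_X\mathbf u^*,\mathbf u^*\rangle_h\geq 1/\|\mathbf u^*\|_X^2$ is a welcome clarification of a step the paper leaves implicit, but it does not change the argument.
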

The proof of the two lemmas will be given in Appendix \ref{sec:appendix-D}.

\begin{proof}[Proof of Theorem~\ref{thm:local_converge}]
    If $\|\mathbf u^n-\mathbf u^*\|_X$ is sufficiently small, then by  Lemma~\ref{lem:lipchitz-grad},
    \begin{align*}
        & \norm{(\mathbf u^n -\mathbf u^*) - \tau_n \nabla_X^\calR \mathbf E_h(\mathbf u^n)}_X^2 \\
        &\quad= \norm{\mathbf u^n -\mathbf u^*}_X^2 - 2\tau_n \langle \mathbf u^n - \mathbf u^*, \nabla_X^\calR \mathbf E_h(\mathbf u^n)\rangle_X +\tau_n^2 \norm{\nabla_X^\calR \mathbf E_h(\mathbf u^n)}_X^2 \\
        &\quad\leq (1+ L_g^2 \tau_n^2 ) \norm{\mathbf u^n -\mathbf u^*}_X^2 - 2\tau_n \langle \mathbf u^n - \mathbf u^*, \nabla_X^\calR \mathbf E_h(\mathbf u^n)\rangle_X \\
        &\quad= (1+ L_g^2 \tau_n^2 ) \norm{\mathbf u^n -\mathbf u^*}_X^2 + 2\tau_n \langle \mathbf u^* - \mathbf u^n, \nabla_X \mathbf E_h(\mathbf u^n)\rangle_X - 2\tau_n \gamma^n\langle \mathbf u^* - \mathbf u^n,  \mathbb G_X\mathbf u^n\rangle_X,
    \end{align*}
    where we used \eqref{eq:Riemann_gradX} with $\gamma^n = \frac{\langle \mathbf u^n,\nabla_X \mathbf E_h(\mathbf u^n)\rangle_h}{\langle\mathbb G_X\mathbf u^n, \mathbf u^n\rangle_h}$. Set $\mathbf e^n = \mathbf u^n - \mathbf u^*$, then 
    \begin{align*}
        & \mathbf E_h(\mathbf u^*) - \mathbf E_h(\mathbf u^n)=\frac{1}{2}\langle -\Delta_h (\mathbf u^n - \mathbf e^n), \mathbf u^n - \mathbf e^n \rangle_h + \frac{1}{2}\langle  \mathbb V(\mathbf u^n - \mathbf e^n ) ,\mathbf u^n - \mathbf e^n \rangle_h    \\ 
        &\quad\qquad + \frac{\beta}{4} \langle  (\mathbf u^n - \mathbf e^n )^2 , (\mathbf u^n - \mathbf e^n)^2 \rangle_h -\frac{1}{2}\langle -\Delta_h \mathbf u^n , \mathbf u^n \rangle_h - \frac{1}{2}\langle  \mathbb V \mathbf u^n ,\mathbf u^n \rangle_h - \frac{\beta}{4} \langle  (\mathbf u^n)^2 , (\mathbf u^n)^2 \rangle_h\\
        &\quad = - \langle \nabla_X \mathbf E_h(\mathbf u^n), \mathbf e^n\rangle_X  + \frac{1}{2}\langle-\Delta_h \mathbf e^n, \mathbf e^n \rangle_h + \frac{1}{2}\langle\mathbb V\mathbf e^n,\mathbf e^n\rangle_h \\
        &\quad\qquad + \frac{3\beta}{2}\langle (\mathbf u^n)^2, (\mathbf e^n)^2\rangle_h - \beta\langle \mathbf u^n,(\mathbf e^n)^3\rangle_h +\frac{\beta}{4} \langle (\mathbf e^n)^2, (\mathbf e^n)^2\rangle_h \\
        &\quad = - \langle \nabla_X \mathbf E_h(\mathbf u^n), \mathbf e^n\rangle_X  + \frac{1}{2}\langle A_{\mathbf u^*} \mathbf e^n, \mathbf e^n \rangle_h  \\
        &\quad\qquad - \frac{\beta}{2} \langle (\mathbf u^*)^2, (\mathbf e^n)^2\rangle_h+ \frac{3\beta}{2}\langle (\mathbf u^n)^2, (\mathbf e^n)^2\rangle_h - \beta\langle \mathbf u^n,(\mathbf e^n)^3\rangle_h +\frac{\beta}{4} \langle (\mathbf e^n)^2, (\mathbf e^n)^2\rangle_h\\
        &\quad \geq - \langle \nabla_X \mathbf E_h(\mathbf u^n), \mathbf e^n\rangle_X  + \frac{1}{2}\langle A_{\mathbf u^*} \mathbf e^n, \mathbf e^n \rangle_h  \\
        &\quad\qquad + \frac{\beta}{2}\langle (\mathbf u^n)^2 - (\mathbf u^*)^2, (\mathbf e^n)^2\rangle_h - \beta\langle \mathbf u^n,(\mathbf e^n)^3\rangle_h +\frac{\beta}{4} \langle (\mathbf e^n)^2, (\mathbf e^n)^2\rangle_h \\
        &\quad = - \langle \nabla_X \mathbf E_h(\mathbf u^n), \mathbf e^n\rangle_X  + \frac{1}{2}\langle A_{\mathbf u^*} \mathbf e^n, \mathbf e^n \rangle_h - \frac{\beta}{4} \| (\mathbf e^n)^2\|_2^2 \\
        &\quad \geq - \langle \nabla_X \mathbf E_h(\mathbf u^n), \mathbf e^n\rangle_X  + \frac{1}{2}\langle A_{\mathbf u^*} \mathbf e^n, \mathbf e^n \rangle_h - \frac{\beta C_1^2}{4} \| \mathbf e^n\|_X^4,
    \end{align*}
    where Lemma \ref{lemma-norm} was used in the last step. As a consequence, we obtain that 
    \begin{align*}
        & \langle \nabla_X \mathbf E_h(\mathbf u^n), \mathbf u^* - \mathbf u^n\rangle_X  
        \leq   \mathbf E_h(\mathbf u^*) - \mathbf E_h(\mathbf u^n) - \frac{1}{2}\langle A_{\mathbf u^*} \mathbf e^n, \mathbf e^n \rangle_h + \frac{\beta C_1^2}{4} \| \mathbf e^n\|_X^4 \\
        \leq & - \frac{\lambda^1_h-\lambda^0_h}{2} \| \mathbf e^n\|_2^2 + \frac{\lambda^1_h - \lambda^0_h}{8} \| \mathbf e^n\|_2^4 - \frac{1}{2}\langle A_{\mathbf u^*} \mathbf e^n, \mathbf e^n \rangle_h + \frac{\beta C_1^2}{4} \| \mathbf e^n\|_X^4 \\
        \leq & - \frac{\lambda^1_h-\lambda^0_h}{2} \| \mathbf e^n\|_2^2 - \frac{1}{2}\langle A_{\mathbf u^*} \mathbf e^n, \mathbf e^n \rangle_h + \left(\frac{\lambda^1_h - \lambda^0_h}{8\alpha^2} + \frac{\beta C_1^2}{4}\right) \| \mathbf e^n\|_X^4,
    \end{align*}
    where we used \eqref{eq:local_convex} and Lemma \ref{lemma-norm}. Note   that $ \mathbf u^n\in \calM\Rightarrow \norm{\mathbf u^n}_2^2=1$, thus
    \[  \langle \mathbf u^* - \mathbf u^n,  \mathbb G_X\mathbf u^n\rangle_X =  \langle \mathbf u^* - \mathbf u^n,\mathbf u^n\rangle_h =\frac{1}{2}\left(\norm{\mathbf u^*}_2^2 -\norm{\mathbf u^n}_2^2 -\norm{\mathbf u^n - \mathbf u^*}_2^2 \right) = - \frac{1}{2} \norm{\mathbf e^n}_2^2.\]
    Combining all estimations above we obtain that
    \begin{align*}
        & \norm{(\mathbf u^n -\mathbf u^*) - \tau_n \nabla_X^\calR \mathbf E_h(\mathbf u^n)}_X^2 \\
        &\quad\leq (1+ L_g^2 \tau_n^2 ) \norm{\mathbf u^n -\mathbf u^*}_X^2 + 2\tau_n \langle \mathbf u^* - \mathbf u^n, \nabla_X \mathbf E_h(\mathbf u^n)\rangle_X - 2\tau_n \gamma^n\langle \mathbf u^* - \mathbf u^n,  \mathbb G_X\mathbf u^n\rangle_X \\
        &\quad\leq (1+ L_g^2 \tau_n^2 ) \norm{\mathbf e^n}_X^2  + \tau_n( \gamma^n -(\lambda^1_h - \lambda^0_h) ) \| \mathbf e^n\|_2^2 -\tau_n \langle A_{\mathbf u^*} \mathbf e^n, \mathbf e^n \rangle_h \\
        &\quad\qquad + \tau_n\left(\frac{\lambda^1_h - \lambda^0_h}{4\alpha^2} + \frac{\beta C_1^2}{2}\right) \| \mathbf e^n\|_X^4 \\
        &\quad\leq (1+ L_g^2 \tau_n^2 ) \norm{\mathbf e^n}_X^2  + \tau_n( \lambda^0_h -(\lambda^1_h - \lambda^0_h) ) \| \mathbf e^n\|_2^2 -\tau_n \langle A_{\mathbf u^*} \mathbf e^n, \mathbf e^n \rangle_h \\
        & \quad\qquad + \tau_n \frac{L_\gamma}{\alpha} \| \mathbf e^n\|_X^3 + \tau_n\left(\frac{\lambda^1_h - \lambda^0_h}{4\alpha^2} + \frac{\beta C_1^2}{2}\right) \| \mathbf e^n\|_X^4 \\
        &\quad\leq (1+ L_g^2 \tau_n^2 ) \norm{\mathbf e^n}_X^2  + \tau_n\max\left\{ \lambda^0_h -(\lambda^1_h - \lambda^0_h), 0\right\}\frac{1}{\lambda^0_h} \langle A_{\mathbf u^*} \mathbf e^n, \mathbf e^n \rangle_h -\tau_n \langle A_{\mathbf u^*} \mathbf e^n, \mathbf e^n \rangle_h \\
        & \quad\qquad + \tau_n \frac{L_\gamma}{\alpha} \| \mathbf e^n\|_X^3 + \tau_n\left(\frac{\lambda^1_h - \lambda^0_h}{4\alpha^2} + \frac{\beta C_1^2}{2}\right) \| \mathbf e^n\|_X^4 \\
        &\quad\leq (1+ L_g^2 \tau_n^2 ) \norm{\mathbf e^n}_X^2  - \tau_n\min\left\{ \frac{\lambda^1_h - \lambda^0_h}{\lambda^0_h}, 1\right\} \langle A_{\mathbf u^*} \mathbf e^n, \mathbf e^n \rangle_h   + \tau_n \frac{L_\gamma}{\alpha} \| \mathbf e^n\|_X^3 \\
        &\quad\qquad + \tau_n\left(\frac{\lambda^1_h - \lambda^0_h}{4\alpha^2} + \frac{\beta C_1^2}{2}\right) \| \mathbf e^n\|_X^4 \\
        &\quad\leq (1+ L_g^2 \tau_n^2 ) \norm{\mathbf e^n}_X^2  - \frac{\tau_n}{C_3}\min\left\{ \frac{\lambda^1_h - \lambda^0_h}{\lambda^0_h}, 1\right\} \|\mathbf e^n\|_X^2 \\
        &\quad\qquad + \tau_n \frac{L_\gamma}{\alpha} \| \mathbf e^n\|_X^3  + \tau_n\left(\frac{\lambda^1_h - \lambda^0_h}{4\alpha^2} + \frac{\beta C_1^2}{2}\right) \| \mathbf e^n\|_X^4,
    \end{align*}
    where we have used \eqref{gamma-estimate} and \eqref{eq:smallest_eigenvalue}. Since we assume that $\|\mathbf u^n - \mathbf u^*\|_X$ is sufficiently small, there are some constants $C_u^{\text{loc}},C_g^{\text{loc}}>0$ such that $\|\mathbf u^n\|_X\leq C_u^{\text{loc}}$ and $\|\nabla_X^\calR \mathbf E_h(\mathbf u^n)\|_X\leq C_g^{\text{loc}}$. Recall in the proof of Theorem~\ref{thm:energy_decay}, we define
    \begin{equation*}
        \mathbf R^n = R_h\left(\mathbf u^n - \tau_n \nabla_X^\calR \mathbf E_h(\mathbf u^n)\right) - \left( \mathbf u^n - \tau_n \nabla_X^\calR \mathbf E_h(\mathbf u^n)\right),
    \end{equation*}
    and have the following bound
    \begin{align*}
        \norm{\mathbf R^n}_X  \leq \frac{\tau_n^2 }{2\alpha} (C_u^{\text{loc}} +  C_g^{\text{loc}}) \norm{\nabla_X^\calR \mathbf E_h(\mathbf u^n)}_X^2 \leq \frac{\tau_n^2 }{2\alpha} (C_u^{\text{loc}} +  C_g^{\text{loc}}) L_g^2 \norm{\mathbf e^n}_X^2.
    \end{align*}
    With \eqref{stepsize}, we have 
    \begin{align*}
        &\norm{\mathbf u^{n+1} - \mathbf u^*}_X \leq \norm{(\mathbf u^n -\mathbf u^*) - \tau_n \nabla_X^\calR \mathbf E_h(\mathbf u^n)}_X + \norm{ \mathbf R^n}_X \\
        & \quad\leq  \Bigg( (1+ L_g^2 \tau_n^2 ) \norm{\mathbf e^n}_X^2  - \frac{\tau_n}{C_3}\min\left\{ \frac{\lambda^1_h - \lambda^0_h}{\lambda^0_h}, 1\right\} \|\mathbf e^n\|_X^2  + \tau_n \frac{L_\gamma}{\alpha} \| \mathbf e^n\|_X^3 \\
        & \quad\qquad + \tau_n\left(\frac{\lambda^1_h - \lambda^0_h}{4\alpha^2} + \frac{\beta C_1^2}{2}\right) \| \mathbf e^n\|_X^4\Bigg)^{1/2} + \frac{\tau_n^2 }{2\alpha} (C_u^{\text{loc}} +  C_g^{\text{loc}}) L_g^2 \norm{\mathbf e^n}_X^2 \\
        &\quad\leq  \left( C_\tau+\tau_n \frac{L_\gamma}{\alpha} \| \mathbf e^n\|_X +\tau_n\left(\frac{\lambda^1_h - \lambda^0_h}{4\alpha^2} + \frac{\beta C_1^2}{2}\right) \| \mathbf e^n\|_X^2\right)^{1/2}\norm{\mathbf e^n}_X \\
        &\quad\qquad+ \frac{\tau_n^2 }{2\alpha} (C_u^{\text{loc}} +  C_g^{\text{loc}}) L_g^2 \norm{\mathbf e^n}_X^2. 
    \end{align*} 
    Since $C_\tau<1$, if $\norm{\mathbf e^n}_X$ is sufficiently small, then we get $ \norm{\mathbf u^{n+1} - \mathbf u^*}_X\leq  C_\delta \norm{\mathbf e^n}_X = C_\delta \norm{\mathbf u^n - \mathbf u^*}_X$, where $C_\delta\in(0,1)$ is a constant. 
\end{proof}

At the end of this section, let us remark that our results for both global and local convergence are for $\alpha>0$ and they do not directly apply to $\alpha=0$ although the algorithm remains valid when setting $\alpha=0$. The constants in Theorem~\ref{thm:energy_decay} and Theorem~\ref{thm:local_converge} depend on the choice of $\alpha>0$.

\section{Numerical tests}
\label{sec-tests}
For numerical implementation, we will only consider the $Q^k$ spectral element method, which can be easily implemented with significant acceleration on modern GPUs \cite{liu2023simple}. 
In this section, we implement the $H^1$ gradient flow \eqref{modified-H1-scheme} with the following spatial discretization for the Laplacian operator and energy function:
\begin{enumerate}
	\item The $Q^k$ spectral element method for any $k\geq 1$, see \cites{liu2023simple, li2020superconvergence}. 
    \item For $k=1$, it is exactly the same as the classical second-order finite difference. 
	\item The fourth-order compact finite difference scheme, see \cites{li2018high, li2023} for details. The definition of the discrete energy is the same as the one for the second-order finite difference scheme, i.e., the trapezoidal rule is used for approximating the integral. Only the discrete Laplacian is replaced by the compact finite difference. 
\end{enumerate}

\subsection{Accuracy test of discrete Laplacian schemes}
We consider an exact solution to the nonlinear eigenvalue problem \eqref{continuum} on $\Omega=[-1,1]^3$ with a potential $V(\bx)=\beta(1-|u^*(\bx)|^2)$ where the ground state is
\begin{equation}
	u^*(\bx)= \beta \sin\left(\pi\frac{x+1}{2 }\right)\sin\left(\pi\frac{y+1}{2 }\right)\sin\left(\pi\frac{z+1}{2 }\right)
	\label{ex-smoothexactsol}
\end{equation}
and the eigenvalue and energy are $ \lambda^*=d \frac{\pi^2}{4}+\beta,\quad E(u^*)= \frac12 \lambda^*-\frac{\beta}{4}\left(\frac{3}{4}\right)^d$. We test the accuracy of various discrete Laplacian and discrete energy schemes, shown in Table \ref{table-accuracy}. The $H^1$-scheme \eqref{modified-H1-scheme} converges within $20$ iterations with $\alpha=0.2$ and step size $1$. 
	
\begin{table}[htbp]
	\centering
	\caption{Accuracy test of various schemes for a 3D problem \eqref{ex-smoothexactsol}. For both finite difference schemes, the discrete ground state coincides with the exact ground state at grid points for \eqref{ex-smoothexactsol}, so ground state errors are meaningless for \eqref{ex-smoothexactsol} thus not listed. }
	\begin{tabular}{|c |c |c c|c c|}
		\hline
		\multicolumn{6}{|c|} {The second order finite difference (FD) } \\
		\hline  FD grid &  $\Delta x$ & $|\lambda_h^*-\lambda^*|$  &  order & $|E_h^*-E^*|$ &order  \\
		% \hline
		%  $9^3$ & $h=0.2$ & 6.07E-2 & - & 3.03E-2 & - \\
		%  \hline
		%  $19^3$ & $h=0.1$ & 1.52E-2 & 1.996 & 7.60E-3 & 1.996\\
		\hline
		$39^3$ & $h=0.05$ & 3.80E-3 & 1.999 &  1.90E-3 &-\\ % & 1.999\\
		\hline
		$79^3$ & $h=0.025$ & 9.51E-4 & 2.000 & 4.76E-4 & 2.000\\
		\hline 
		\multicolumn{6}{|c|} {The fourth-order compact finite difference} \\
		% \hline  FD grid &  Mesh size & $|\lambda_h^*-\lambda^*|$  &  order & $|E_h^*-E^*|$  & order  \\
		% \hline
		% $9^3$ & $h=0.2$ & 3.01E-4 & - & 1.51E-4 & -\\
		%  \hline
		%  $19^3$ & $h=0.1$ & 1.88E-5 & 4.004 & 9.40E-6 & 4.004\\
		\hline
		$39^3$ & $h=0.05$ & 1.17E-6 & 4.001 & 5.87E-7 &-\\ %& 4.001\\
		\hline
		$79^3$ & $h=0.025$ & 7.33E-8 & 4.000 & 3.66E-8 & 4.000\\
		\hline
	\end{tabular}
	\begin{tabular}{|c |c |c c|c c|c c|} 
		\hline
		\multicolumn{8}{|c|} {High order finite element methods} \\
		\hline   DoFs &  Mesh & $|\lambda_h^*-\lambda^*|$  &  order & $|E_h^*-E^*|$  & order& $\|\mathbf u^*_h-\mathbf u^*\|_\infty$ & order \\
		\hline
		\multicolumn{8}{|c|} {$Q^2$ spectral element method} \\
		\hline
		$9^3$ & $5^3$ & 8.13E-4 & - & 4.05E-4 & -& 3.67E-4 &-\\
		\hline
		$19^3$ & $10^3$ & 5.02E-5 & 4.016 & 2.51E-5 & 4.012&2.39E-5&3.94\\
		\hline
		% $39^3$ & $20^3$ & 3.13E-6 & 4.003 & 1.56E-6 & 4.003&1.56E-6 &3.94\\
		%  \hline
		% $79^3$ & $40^3$ & 1.95E-7 & 4.001 & 9.78E-8 & 4.001&9.87E-8 & 3.98\\
		% \hline
		\multicolumn{8}{|c|} {$Q^3$ spectral element method} \\
		\hline
		$11^3$ & $4^3$ & 5.20E-5 & - & 2.87E-6 & -&1.48E-5 &-\\
		\hline
		$23^3$ & $8^3$ & 8.98E-8 & 5.855 & 4.49E-8 & 6.000&5.21E-7& 4.83\\
		\hline
		%  $47^3$ & $16^3$ & 1.40E-9 & 6.001 & 7.01E-10 & 6.000&  1.67E-8& 4.96\\
		% \hline 
		\multicolumn{8}{|c|} {$Q^4$ spectral element method} \\
		\hline
		$11^3$ & $3^3$ & 9.00E-7 & - & 5.22E-8 & -& 4.99E-6 &-\\
		\hline
		$23^3$ & $6^3$ & 4.11E-10 & 11.09 & 2.05E-10 & 7.990&8.30E-8 &5.91\\
		\hline
		$47^3$ & $12^3$ & 2.26E-12 & 7.506 & 1.23E-12 & 7.384&1.44E-9 & 5.85\\
		\hline 
		\multicolumn{8}{|c|} {$Q^5$ spectral element method} \\
		\hline
		$9^3$ & $2^3$ & 1.49E-6 & - & 3.87E-5 & -&1.14E-6 &-\\
		\hline
		$19^3$ & $4^3$ & 6.84E-11 & 14.41 & 6.50E-12 & 22.51&7.87E-9 & 7.17\\
		\hline 
		\multicolumn{8}{|c|} {$Q^6$ spectral element method} \\
		\hline
		$9^3$ & $2^3$ & 8.08E-9 & - & 7.31E-7 & -& 4.06E-8  &-\\
		\hline
		\multicolumn{8}{|c|} {$Q^7$ spectral element method}  \\
		\hline
		$13^3$ & $2^3$ & 1.89E-10 & - & 9.99E-9 & -&1.61E-9 &-\\
		\hline
		\multicolumn{8}{|c|} {$Q^8$ spectral element method}  \\
		\hline
		$15^3$ & $2^3$ & 3.76E-13 & - & 1.03E-10 & -&5.76E-11 &-\\
		\hline
	\end{tabular}
	\label{table-accuracy}
\end{table}

\subsection{Comparison of various gradient flow algorithms in 2D}
	
We consider the 2D problem with $V(\bx)=\sin\left(\frac{\pi}{4}x\right)^2\sin\left(\frac{\pi}{4}y\right)^2$ on the region $\Omega=[-16, 16]^2$. The performance of different gradient flow algorithms with fixed step size $1$ is shown in Figure \ref{2D-example-figure} (a) and (b). See \cite{henning2020sobolev} and references therein for the definition of these schemes. We emphasize that these algorithms could be faster with different step sizes, e.g., the $L^2$ flow will be faster with a larger step size, and $A_u$ algorithm can be faster with adaptive step size. Here we just use the same step size to compare them. Notice that  only $(-\Delta_h+\alpha I)^{-1}$ needs to be applied twice in the modified $H^1$ gradient flow \eqref{modified-H1-scheme}. In each iteration of $L^2$, $A_u$ and $A_0$ schemes, one needs to invert matrices like $-\Delta_h+V(\bx)$ or $-\Delta_h+V(\bx)+\beta |\bf u|^2$, which is much more expensive than computing $(-\Delta_h+\alpha I)^{-1}$  \cite{liu2023simple}. As shown in Figure \ref{2D-example-figure} (a) and (b),   \eqref{modified-H1-scheme} with a proper parameter $\alpha>0$ can allow a much larger step size for convergence, compared to $\alpha=0$. Since $H^1$, $A_0$ and $A_u$ schemes can all be written as  Riemannian gradient descent methods, in each iteration, one can also numerically compute the best step size by  minimizing the energy function w.r.t. the step size. For simplicity, we use the {\it fminbnd} function in MATLAB to solve such a one-dimensional minimization problem, which involves evaluating the energy function quite a few times.
In  Figure \ref{2D-example-figure} (c) and (d), we show the performance of $H^1$, $A_0$ and $A_u$ schemes using such an optimal step size. We observe that iteration numbers in all the Riemannian gradient descent  schemes with the optimal step size to reach convergence are almost the same. On the other hand,  for CPU time of solving this particular problem, $H^1$ scheme with $\alpha=0.15$ and the optimal step size is obviously slower than $H^1$ scheme with $\alpha=0.15$ and a constant step size $1$, due to the extra computational cost of computing the optimal step size, which would be more expensive for 3D problems.
	
\begin{figure}[htbp]
	\begin{center} 
    \subfigure[Fixed step size $1$.]{\includegraphics[scale=0.33]{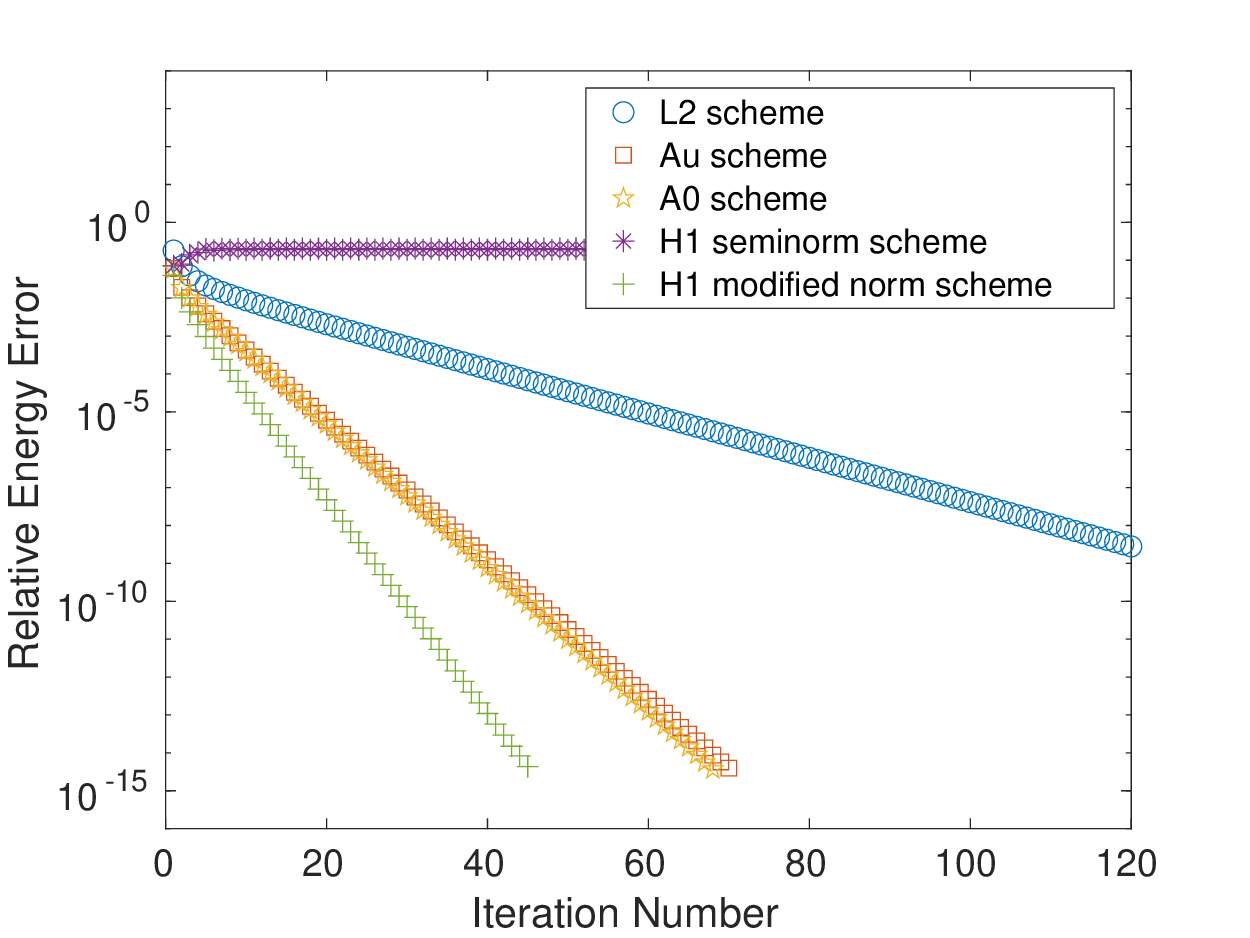}}  
     \subfigure[Fixed step size $1$. ]{\includegraphics[scale=0.33]{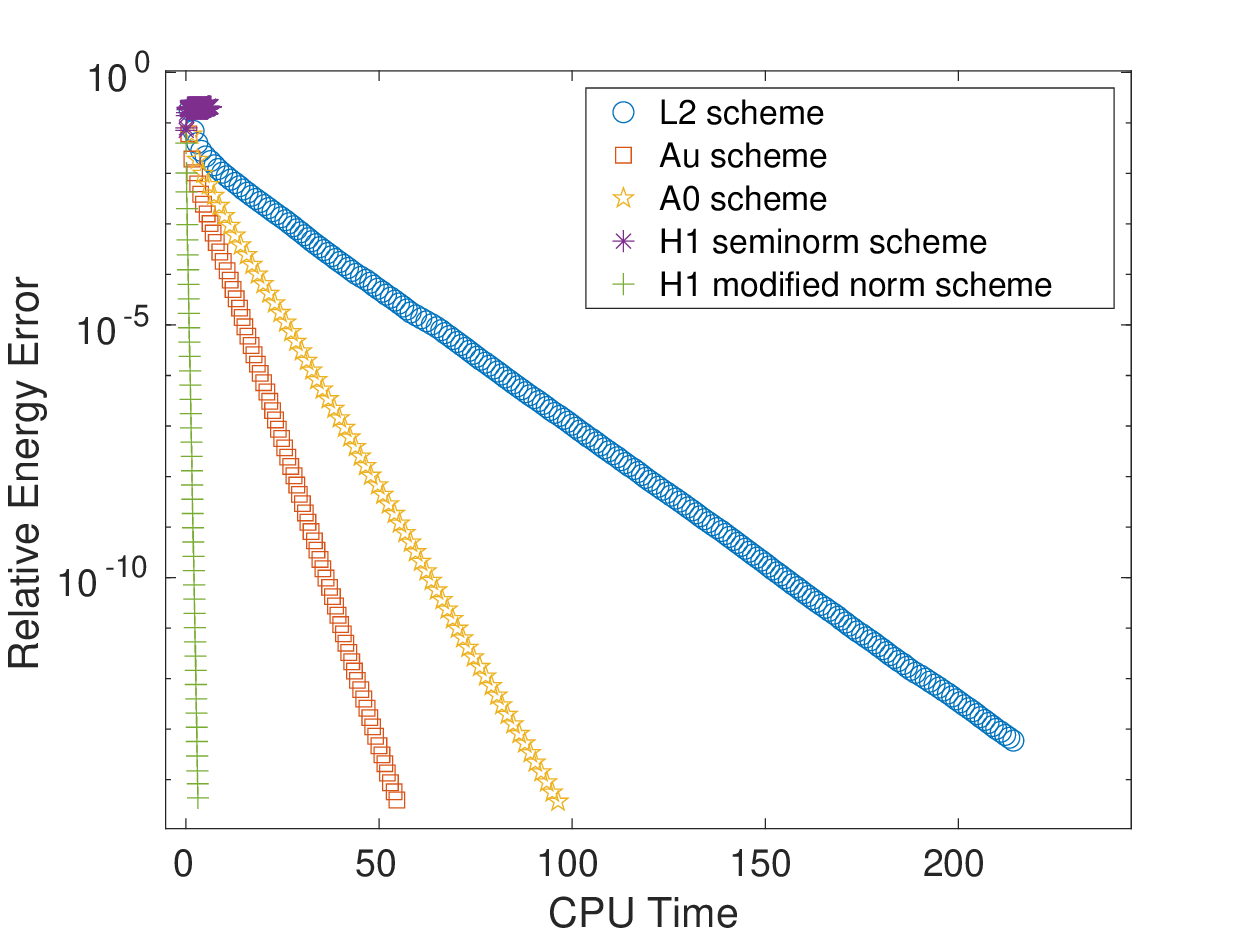}} \\  
      \subfigure[Optimal step size is used for gradient flows.]{\includegraphics[scale=0.33]{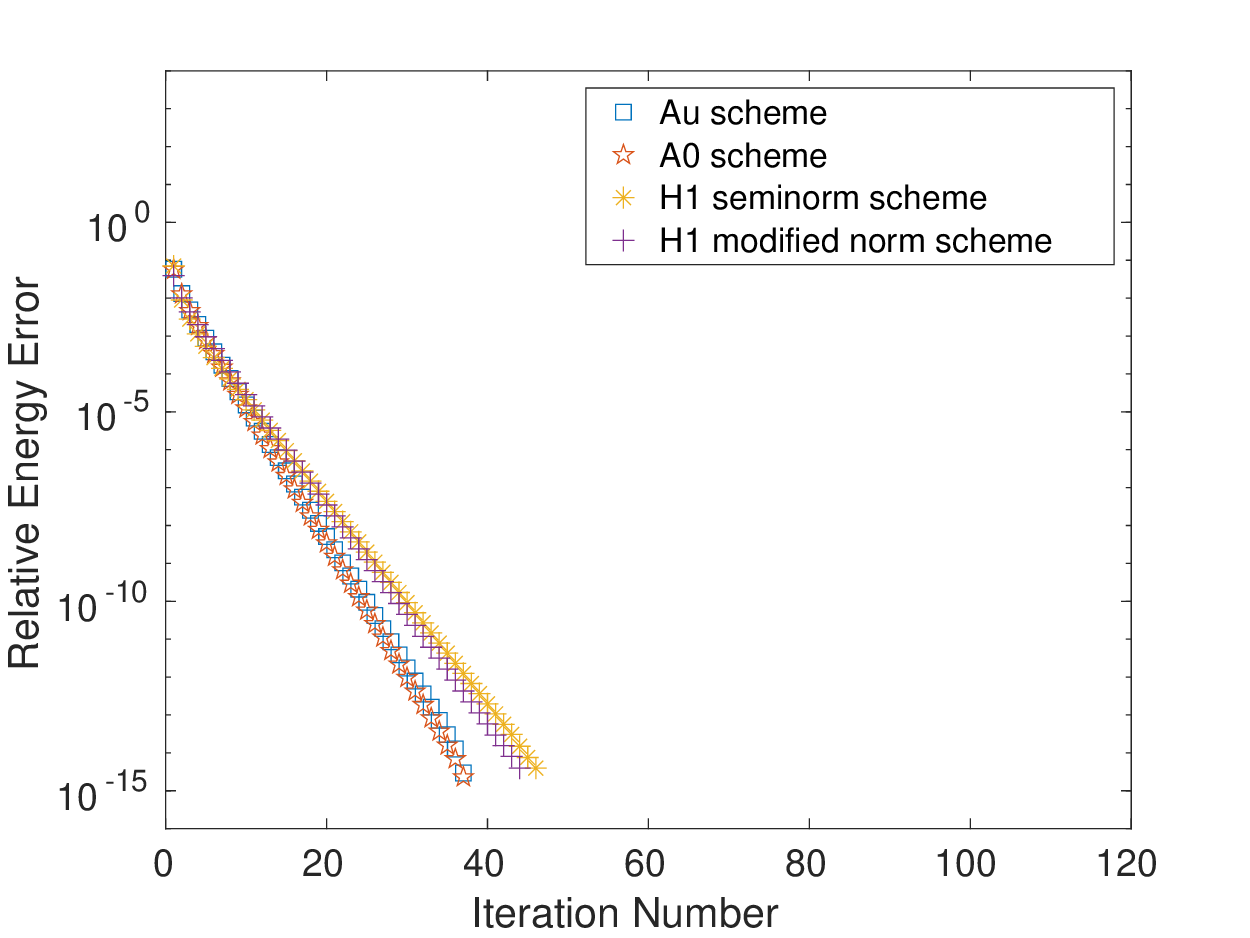}}  
     \subfigure[Optimal step size is used for gradient flows.]{\includegraphics[scale=0.33]{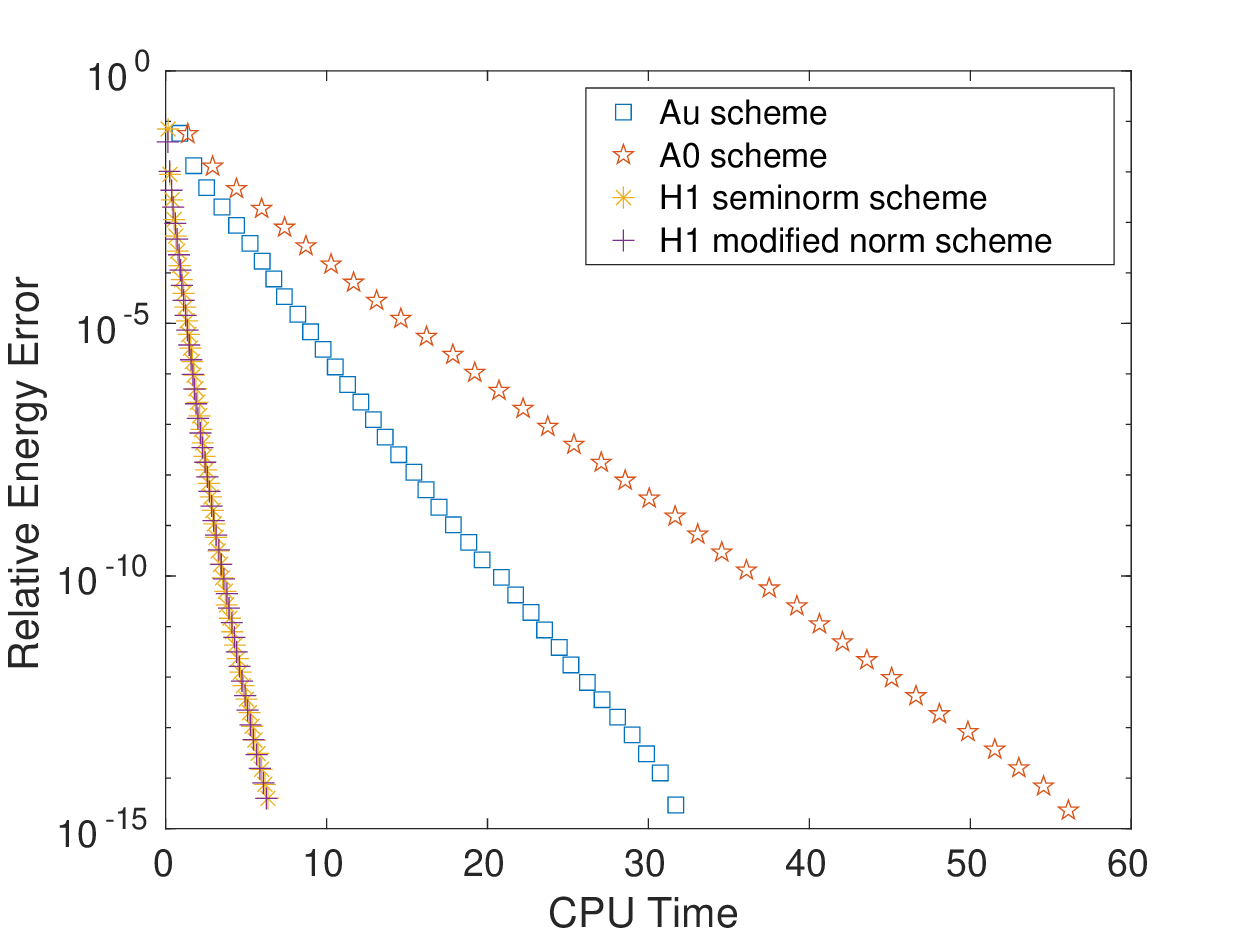}}   
    \end{center}
    \caption{A 2D example with $\beta=10$ of second order finite difference on a  $800\times 800$ grid. The modified $H^1$ norm has parameter $\alpha=0.15$  in \eqref{modified-H1-scheme}. The $H^1$ seminorm scheme is  \eqref{modified-H1-scheme} with $\alpha=0$. 
    The CPU time for the $H^1$ scheme with $\alpha=0.15$ and the fixed step size $1$ to converge is 3 seconds, and the CPU time for the $H^1$ scheme with $\alpha=0.15$ or $\alpha=0$ with optimal step size to converge is more than 6 seconds.
    For $L^2$, $A_u$, and $A_0$ schemes (see \cite{henning2020sobolev} for definition), preconditioned conjugate gradient (PCG) is used for inverting a matrix like $-\Delta_h+V(\bx)$ and $(-\Delta_h)^{-1}$ is used as a preconditioner. The PCG converges within 30 iterations for all linear systems in this test. 
    }	\label{2D-example-figure}
\end{figure}
	
\subsection{Comparison with the Backward Forward Euler method}
	
The modified $H^1$ flow has the advantage of inverting only constant coefficient Laplacian, which can be easily accelerated on modern GPUs as shown recently in \cite{liu2023simple}. On the other hand, in the literature, there are similar schemes, e.g., the Backward Forward Euler method with a stabilization parameter in \cite{bao2006efficient} is given by
\begin{equation}
	\label{BFSP} \tilde{\mathbf u}=\left(-\Delta_h+\alpha\mathbb I+\frac{1}{\Delta t} \mathbb I\right)^{-1}\left(\alpha+\frac{1}{\Delta t}-\mathbb V-\beta\diag(\mathbf u^n)^2\right)\mathbf u^n,\quad \mathbf u^{n+1}=\frac{\tilde{\mathbf u}}{\|\tilde{\mathbf u}\|_2}.
\end{equation}
The modified $H^1$ flow is very different from \eqref{BFSP}. For instance, \eqref{modified-H1-scheme} is a Riemannian gradient descent method. In particular, only one inversion of Laplacian is needed in each step of  \eqref{BFSP}, but there are two inversions of Laplacian in \eqref{modified-H1-scheme}. The optimal parameter $\alpha$ for \eqref{BFSP} was given in \cite{bao2006efficient}, yet it is unclear what the optimal $\alpha$ should be for the modified $H^1$ flow. In numerical tests, the modified $H^1$ flow performs better when using tuned $\alpha$, see Figure  \ref{figure-bfsp}. 
\begin{figure}[htbp]
	\begin{center}
		\subfigure{\includegraphics[scale=0.33]{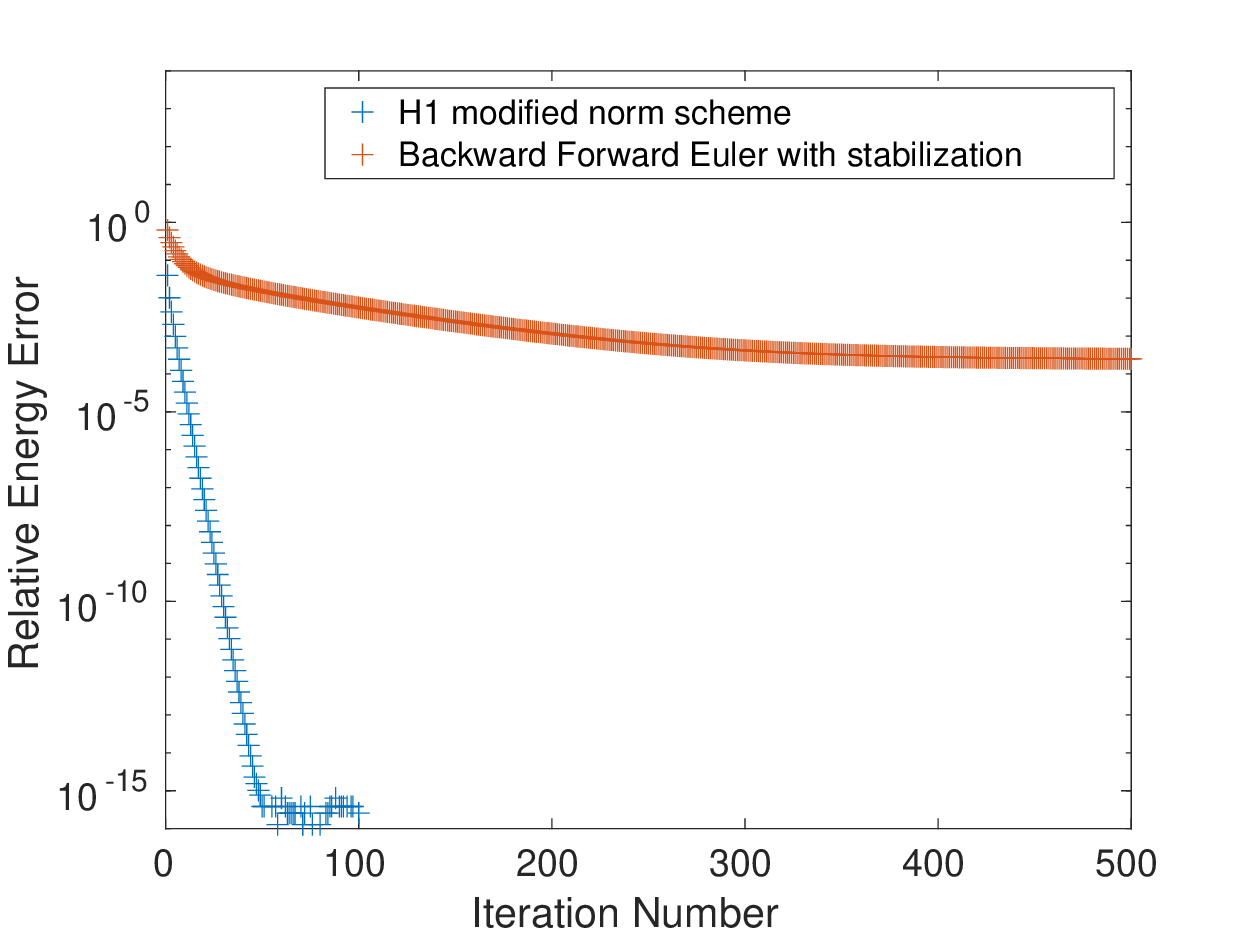} } 
		\hspace{-0.2cm}
		% \subfigure{\includegraphics[scale=0.23]{discretized problem/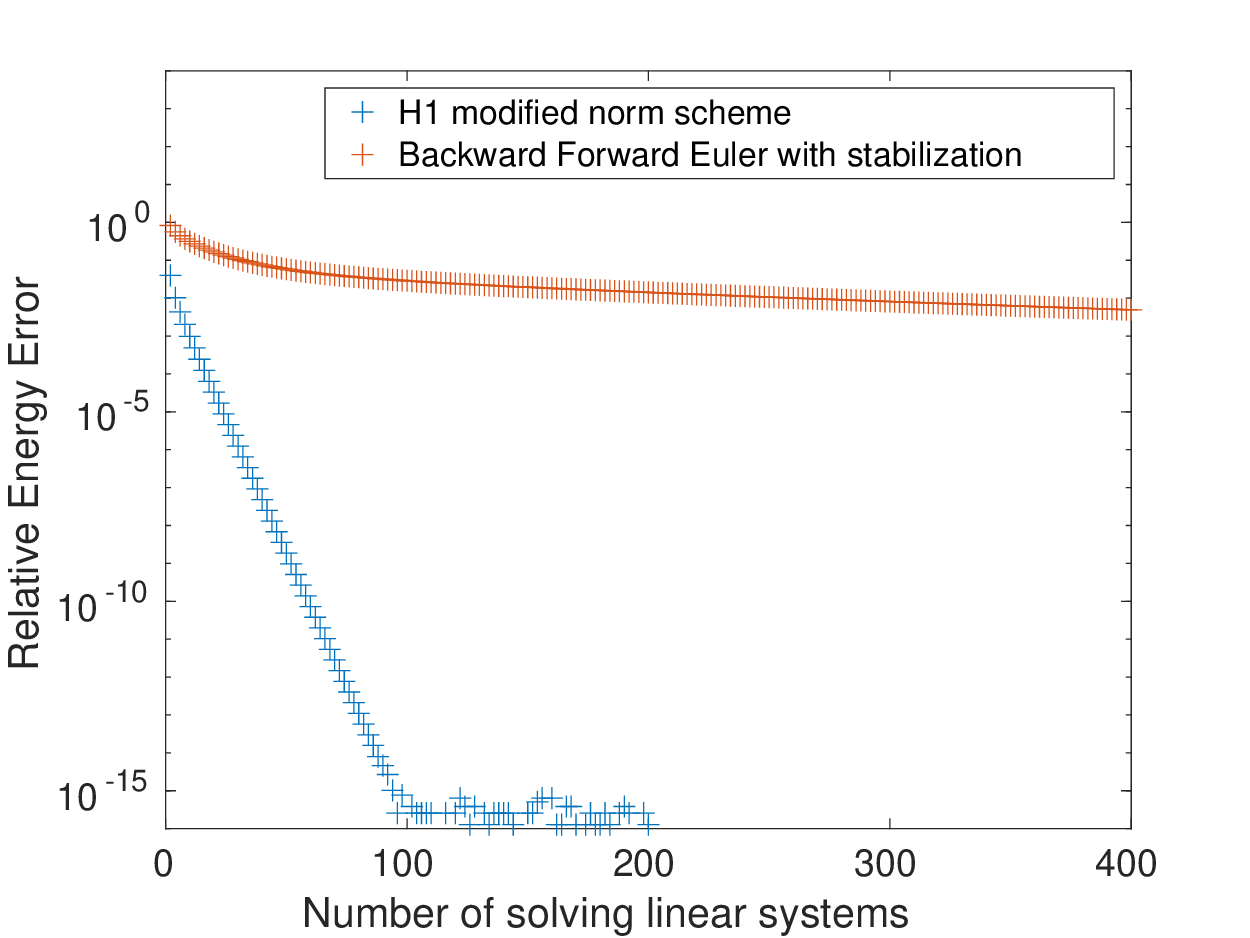} } 
		% \hspace{-0.2cm}
		\subfigure{\includegraphics[scale=0.33]{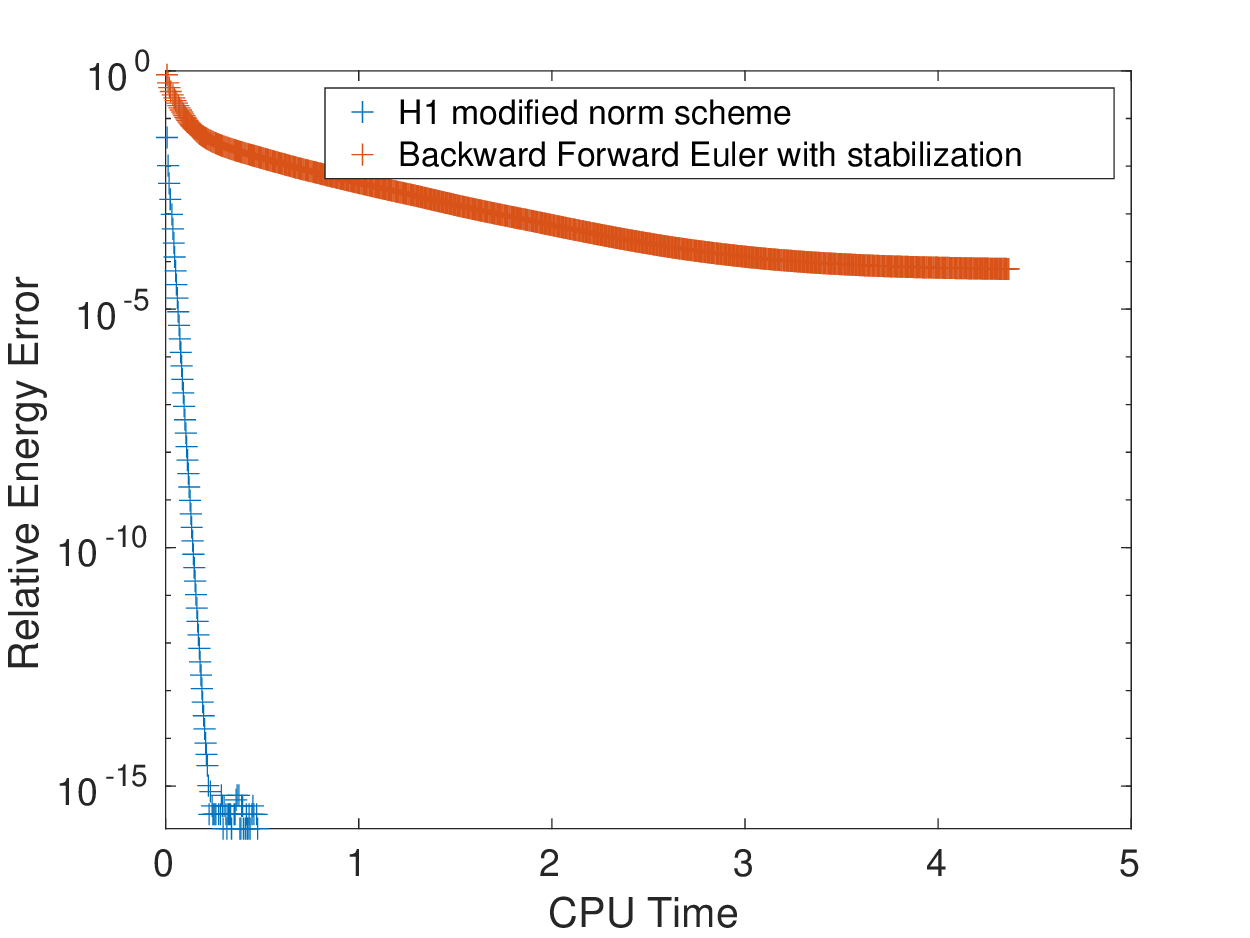} } \\
		%   \subfigure{\includegraphics[scale=0.23]{discretized problem/Figure/BFSP4.eps} } 
		%  \hspace{-0.2cm}
		%  \subfigure{\includegraphics[scale=0.23]{discretized problem/Figure/BFSP5.eps} } 
		%  \hspace{-0.2cm}
		%  \subfigure]{\includegraphics[scale=0.23]{discretized problem/Figure/BFSP6.eps} } 
	\end{center}
	\caption{A 2D example with $\beta=5$ of second order discrete Laplacian on a $300\times 300$ grid. The modified $H^1$ norm has parameter $\alpha=0.15$ and step size $1$   in \eqref{modifiednorm}.  The Backward Forward Euler with stabilization \eqref{BFSP} uses the optimal parameters $\alpha$ in \cite{bao2006efficient} and the largest stable step size $0.1$, which is also the most efficient step size. The initial condition is the ground state for $\beta=0$. }
	\label{figure-bfsp}
\end{figure}

\subsection{Tuning parameters}
We consider  $\Omega=[-16, 16]^3$ with a potential:
\begin{equation}
	V(\bx)=\sin\left(\frac{\pi}{4}x\right)^2\sin\left(\frac{\pi}{4}y\right)^2\sin\left(\frac{\pi}{4}z\right)^2.
	\label{3d-potential-gpu}
\end{equation}
For \eqref{3d-potential-gpu} and $\beta=10$, the modified  $H^1$ scheme \eqref{modified-H1-scheme} with  $\alpha=0.15$ and $\tau=1$ has the same convergence performance for any grid size or any discrete Laplacian, unless it is an extremely coarse grid, as shown in Figure \ref{3D-example-differentgrid} (a). Thus we can easily find the best step size for a given $\beta=4000$ by tuning it on a $100^3$ grid as shown in Figure \ref{3D-example-differentgrid} (b). 
	
\begin{figure}[htbp]
	\begin{center}
		\subfigure[ Step size is $1$, $\alpha=0.15, \beta=10$.]{\includegraphics[scale=0.24]{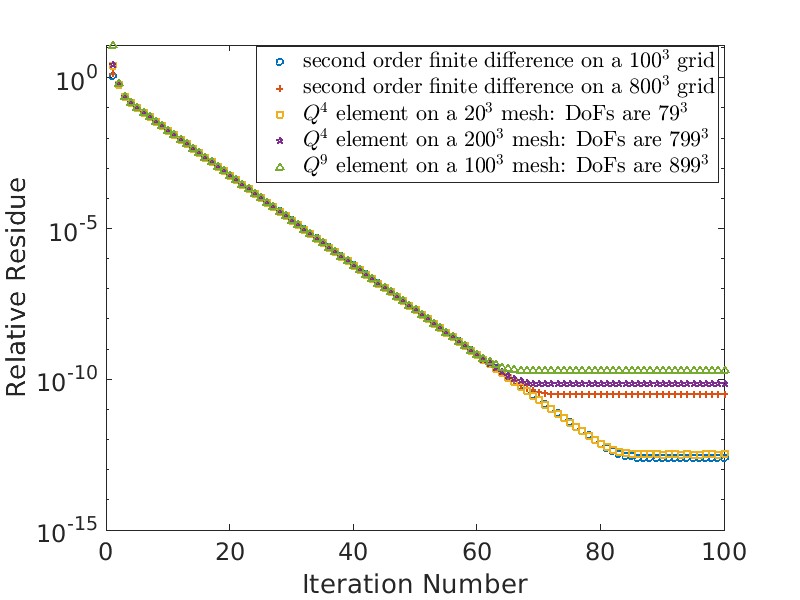} } 
		\hspace{-0.1cm}
		%  \subfigure[ Step size is $1$, $\alpha=0.15, \beta=10$.]{\includegraphics[scale=0.25]{discretized problem/Figure/Compare_grid_coarse}} \\
		%   \subfigure[Fixed step size $1$, $\alpha=0.15$, second order finite difference on a $100^3$ grid.]{\includegraphics[scale=0.25]{discretized problem/Figure/compare_beta} } 
		%  \hspace{-0.1cm}
		\subfigure[Fixed $\beta=4000$, $\alpha=0.15$, second order finite difference on a $100^3$ grid.]{\includegraphics[scale=0.24]{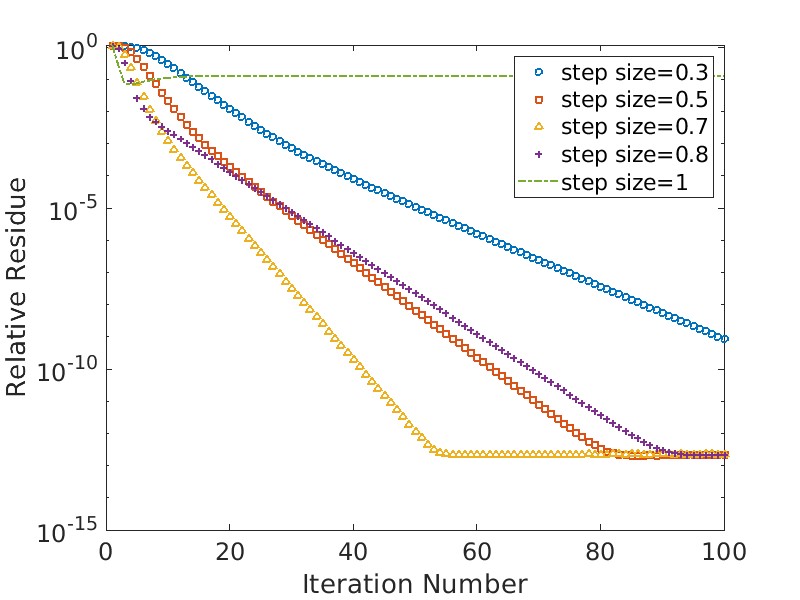}} 
	\end{center}
	\caption{The performance of  the modified  $H^1$ scheme \eqref{modified-H1-scheme} solving 3D Gross-Pitaevskii nonlinear eigenvalue problem with potential \eqref{3d-potential-gpu}. The left shows that the performance is independent of discretization and grid size. Thus parameters can be tuned on a coarse grid as shown in the right.  }
	\label{3D-example-differentgrid}
\end{figure}

\subsection{3D implementation on GPUs}
	
As shown recently in \cite{liu2023simple}, any discrete Laplacian on a Cartesian grid can be easily accelerated on modern GPUs with a simple implementation in MATLAB 2023. In particular, to invert a discrete Laplacian on a grid size $1000^3$, it only takes 0.8 seconds on one Nividia A100 GPU card. And such a result holds for $Q^k$ spectral element method, see \cite{liu2023simple} for details.   
	
We consider solving the 3D problem with potential \eqref{3d-potential-gpu}. See Figure \ref{3D-example-potential} for visualization of the potential and its ground state for $\beta=10$ and $\beta=4000$. We define the online computation time as the computational time without counting the offline computational time such as preparing discrete Laplacian matrices and loading data to the GPU. For the potential \eqref{3D-example-potential} with $\beta=10$,  the modified $H^1$ flow scheme \eqref{modified-H1-scheme} with $\alpha=0.15$ and  $\tau=1$, we stop the iteration when the relative residue stops decreasing, where the relative residue is defined as
\[\text{residue}=\left\|\mathbf u-\frac{-\Delta_h \mathbf u+\mathbb V\mathbf u +\beta |\mathbf u|^3}{\|-\Delta_h \mathbf u+\mathbb V\mathbf u +\beta |\mathbf u|^3\|}\right\|/\|\mathbf u\|.\]
The results of computing energy and the eigenvalue are listed in Table \ref{table-accuracy-GPU}, in which GPU time is the online computational time. In particular, the online computation time is 214 seconds on the  Nvidia A100 for 100 iterations of \eqref{modified-H1-scheme} on a $1000^3$ grid. For $\beta=4000$, we use step size $0.7$ and $\alpha=0.15$, we stop the iteration when the relative residue stops decreasing. The performance is listed in Table \ref{table-accuracy-GPU2}. In both Table \ref{table-accuracy-GPU} and Table \ref{table-accuracy-GPU2}, the reference solutions are generated by $Q^{10}$ spectral element method on a $100^3$ mesh, and the ground state errors $\frac{\|{\bf u^*}-{\bf u}_{ref}\|_{\ell^\infty}}{\|{\bf u}_{ref}\|_\infty}$ are measured only at the nodes of matching with nodes of $Q^{10}$ elements a $100^3$ mesh. For instance, for $Q^5$ element on a $20^3$ mesh,  $\frac{\|{\bf u^*}-{\bf u}_{ref}\|_{\ell^\infty}}{\|{\bf u}_{ref}\|_\infty}$ is measured only at the cell vertices.

\begin{figure}[htbp]
	\begin{center}
		\subfigure[The potential function.]{\includegraphics[scale=0.23]{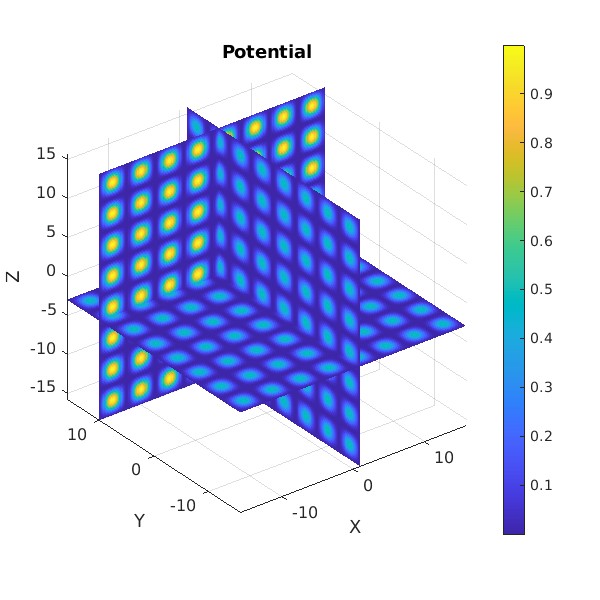} }
		\hspace{-0.5cm}
		\subfigure[$\beta=10$]{\includegraphics[scale=0.23]{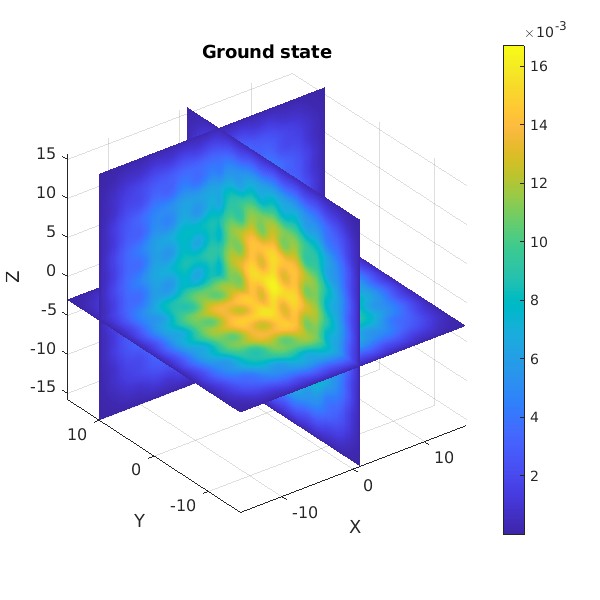}}
		\hspace{-0.5cm}
		\subfigure[$\beta=4000$]{\includegraphics[scale=0.23]{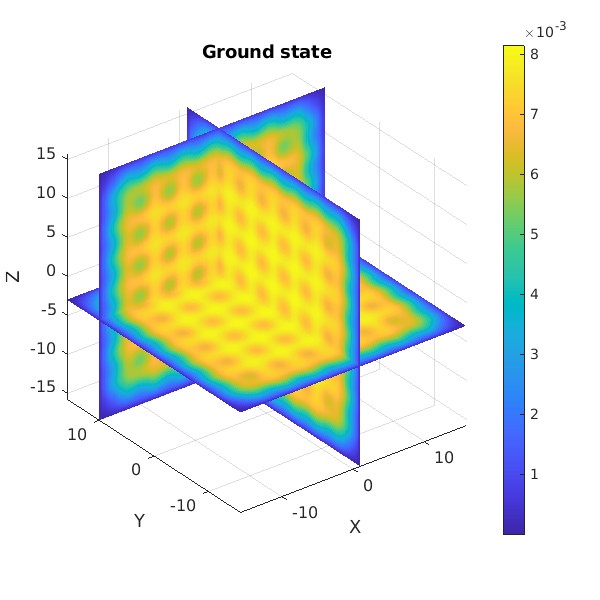}}
	\end{center}
	\caption{The potential function \eqref{3d-potential-gpu} and its ground state.}
	\label{3D-example-potential}
\end{figure}

\begin{table}[htbp]
	\centering
	\caption{The 3D problem as shown in Figure \ref{3D-example-potential} with $\beta=10$. The GPU (Nvidia A100 80G) online computation time of  the scheme \eqref{modified-H1-scheme} with $\alpha=0.15$ and  $\tau=1$. The iteration stops when the relative residue stops decreasing. The reference eigenvalue $\lambda^*_{ref}=0.143834048046$, energy $E_{ref}=0.071660785256$ and ground state ${\bf u}_{ref}$ are generated by $Q^{10}$ SEM on a $100^3$ mesh. }
	\resizebox{\textwidth}{!}{
		\begin{tabular}{|c |c |c |c | c| c|c|}
			\hline
			\multicolumn{7}{|c|} {The second order finite difference} \\
			\hline 
			DoFs & Mesh size & $\frac{|\lambda_h^*-\lambda_{ref}^*|}{\lambda_{ref}^*}$  & $\frac{|E_h({\bf u^*})-E_{ref}|}{E_{ref}}$ & $\frac{\|{\bf u^*}-{\bf u}_{ref}\|_{\ell^\infty}}{\|{\bf u}_{ref}\|_\infty}$ & Iteration \# &  GPU time    \\
			\hline 
			$199^3$ & $h=\frac{32}{200}$ & 3.70E-4 & 3.72E-4 & 8.04E-4 &86 & 0.88 second \\
			\hline
			$999^3$ & $h=\frac{32}{1000}$ & 1.48E-5 & 1.48E-5 & 3.21E-5 &76 & 165 seconds \\
			\hline
			\multicolumn{7}{|c|} {The fourth-order compact finite difference} \\
			\hline 
			$199^3$ & $h=\frac{32}{200}$ & 1.17E-6 & 1.18E-6 & 2.00E-6 &84 & 0.92 second \\
			\hline
			$999^3$ & $h=\frac{32}{1000}$ & 1.86E-9 & 1.87E-9 & 3.21E-9 &73 & 161 seconds \\
			\hline
			\multicolumn{7}{|c|} {$Q^4$ spectral element method} \\
			\hline 
			DoFs & FEM Mesh   & $\frac{|\lambda_h^*-\lambda_{ref}^*|}{\lambda_{ref}^*}$  &  $\frac{|E_h({\bf u^*})-E_{ref}|}{E_{ref}}$ & $\frac{\|{\bf u^*}-{\bf u}_{ref}\|_{\ell^\infty}}{\|{\bf u}_{ref}\|_\infty}$ & Iteration \# &  GPU time    \\
			\hline
			$199^3$ & $50^3$ & 8.57E-10 & 8.62E-10 & 2.73E-7 &83 & 0.88 second \\
			\hline
			$399^3$ & $100^3$ & 3.60E-12 & 3.62E-12 & 4.86E-9 &78 & 6.10 seconds \\
			\hline 
			\multicolumn{7}{|c|} {$Q^{5}$ spectral element method} \\
			\hline  
			$99^3$ & $20^3$ & 3.53E-9 & 3.49E-9 & 6.06E-7 &88 & 0.54 second \\
			\hline
			\multicolumn{7}{|c|} {$Q^{20}$ spectral element method} \\
			\hline  
			$99^3$ & $5^3$ & 5.31E-12 & 5.23E-12 & 1.29E-8 &83 & 0.54 second \\
			\hline
			$199^3$ & $10^3$ & 8.79E-12 & 8.85E-12 & 6.42E-12 &75 & 0.82 second \\
			\hline 
		\end{tabular}}
	\label{table-accuracy-GPU}
\end{table}
	
\begin{table}[htbp]
	\centering
	\caption{The 3D problem as shown in Figure \ref{3D-example-potential} with $\beta=4000$. The GPU (Nvidia A100 80G) online computation time of the scheme \eqref{modified-H1-scheme} with $\alpha=0.15$ and  $\tau=0.7$. The iteration stops when the relative residue stops decreasing. The reference eigenvalue $\lambda^*_{ref}=0.34919956116$, energy $E_{ref}=0.127936543199$ and ground state ${\bf u}_{ref}$ are generated by $Q^{10}$ SEM on a $100^3$ mesh.  }
	\resizebox{\textwidth}{!}{
		\begin{tabular}{|c |c |c |c | c| c|c|}
			\hline
			\multicolumn{7}{|c|} {The second order finite difference} \\
			\hline 
			DoFs & Mesh size & $\frac{|\lambda_h^*-\lambda_{ref}^*|}{\lambda_{ref}^*}$  & $\frac{|E_h({\bf u^*})-E_{ref}|}{E_{ref}}$ & $\frac{\|{\bf u^*}-{\bf u}_{ref}\|_{\ell^\infty}}{\|{\bf u}_{ref}\|_\infty}$ & Iteration \# &  GPU time    \\ 
			\hline
			$999^3$ & $h=\frac{32}{1000}$ & 5.67E-6 & 7.82E-6 & 2.96E-5 &48 & 105 seconds \\
			\hline
			\multicolumn{7}{|c|} {The fourth-order compact finite difference} \\
			\hline
			$999^3$ & $h=\frac{32}{1000}$ & 5.67E-10 & 8.72E-10 & 2.77E-9 &48 & 106 seconds \\
			\hline
			\multicolumn{7}{|c|} {$Q^4$ spectral element method} \\
			\hline 
			DoFs & FEM Mesh   & $\frac{|\lambda_h^*-\lambda_{ref}^*|}{\lambda_{ref}^*}$  & $\frac{|E_h({\bf u^*})-E_{ref}|}{E_{ref}}$ & $\frac{\|{\bf u^*}-{\bf u}_{ref}\|_{\ell^\infty}}{\|{\bf u}_{ref}\|_\infty}$ & Iteration \# &  GPU time    \\
			\hline
			$199^3$ & $50^3$ & 2.64E-10 & 4.06E-10 & 2.81E-7 &54 & 0.63 second \\
			\hline
			$399^3$ & $100^3$ & 1.79E-12 & 2.33E-12 & 4.45E-9 &50 & 3.88 seconds \\
			\hline 
			\multicolumn{7}{|c|} {$Q^{5}$ spectral element method} \\
		  \hline  
			$99^3$ & $20^3$ & 7.49E-8 & 6.44E-8 & 2.18E-7 &57 & 0.42 second \\
			\hline
			\multicolumn{7}{|c|} {$Q^{20}$ spectral element method} \\
			\hline  
			$99^3$ & $5^3$ & 3.43E-12 & 4.07E-12 & 8.32E-9 &54 & 0.37 second \\
			\hline
			$199^3$ & $10^3$ & 3.58E-12 & 4.96E-12 & 3.22E-12 &50 & 0.57 second \\
			\hline 
		\end{tabular}}
	\label{table-accuracy-GPU2}
\end{table}

\subsection{A 3D example with a combined harmonic and optical lattice potential}
	
We consider the 3D example in \cite{bao2006efficient} with the following combined harmonic and optical lattice potential on the domain $[-8,8]^3$:
\[ V(x,y,z)=x^2+y^2+z^2+100\left(\sin^2\frac{\pi x}{4}+\sin^2\frac{\pi y}{4}+\sin^2\frac{\pi z}{4}\right).\]
	
For $\beta=1600$, we find that $\alpha=10$ and $\Delta t=0.1$ are efficient parameters. The modified $H^1$ flow with $Q^{40}$ spectral element method on a $5\times 5\times 5$ finite element mesh converges with residue reaching $6.3\times 10^{-13}$ after 665 iterations using a simple and crude initial guess $\mathbf u^0\equiv 1$. The online computation time is 6 seconds on Nvidia A100. The numerical ground state energy and eigenvalue are $E(\mathbf u^*_h)=33.80227900547$ and $\lambda_h^*=80.89511440602$. The results are consistent with findings in  \cite{bao2006efficient}. Due to the different definitions of energy functions  in this paper and \cite{bao2006efficient},	$\beta=1600$, $E(\mathbf u^*_h)=33.80227900547$ and $\frac12 \lambda_h^*= 40.44755720301$ in this paper, correspond to the case for $\beta=800$, $E_g=33.8023,$ and $\mu_g=40.4476$ in \cite{bao2006efficient}. See Figure \ref{figure-3d-example3}.

\begin{figure}[htbp]
	\begin{center} 
		\subfigure{\includegraphics[scale=0.32]{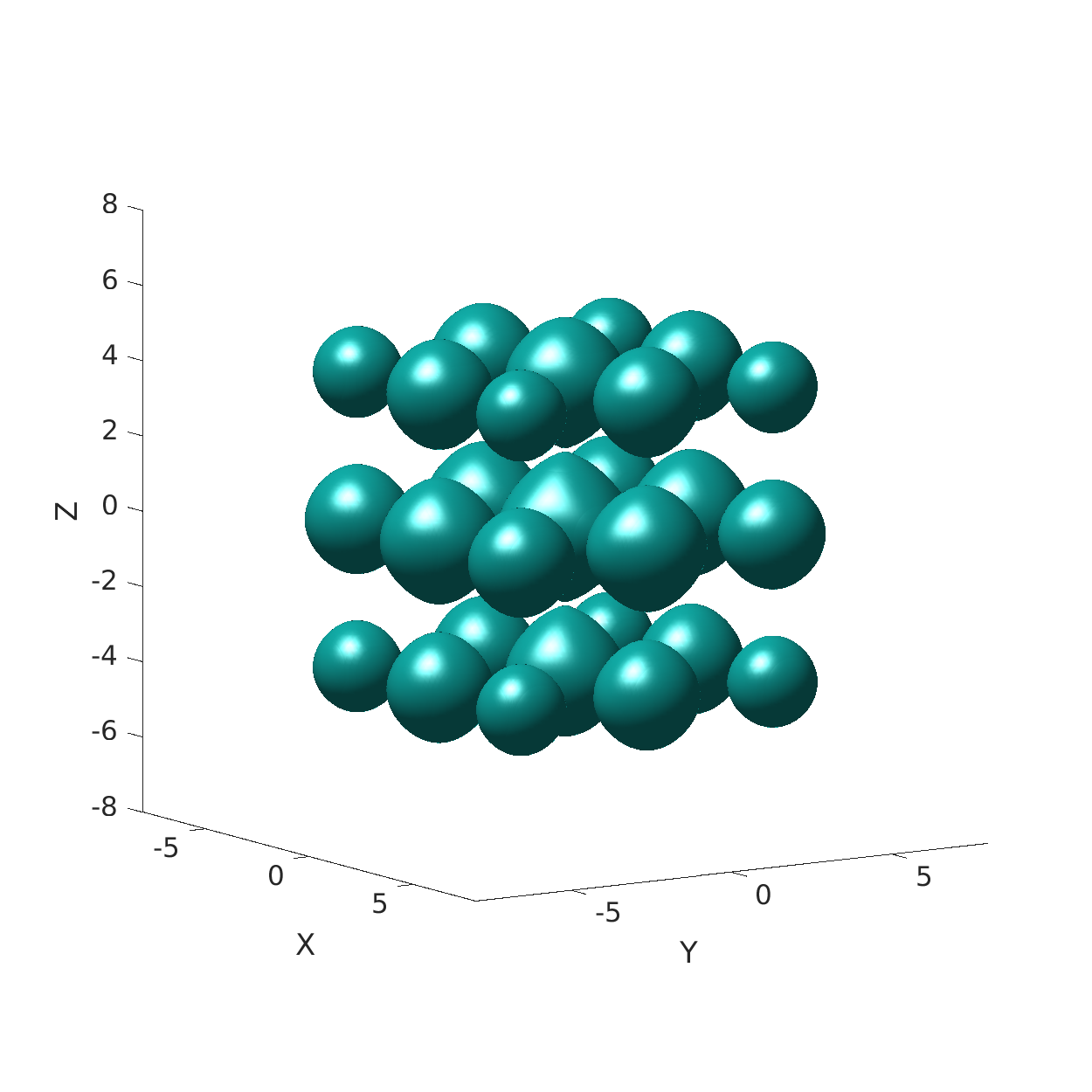} } 
		\hspace{-0.5cm}
		\subfigure{\includegraphics[scale=0.32]{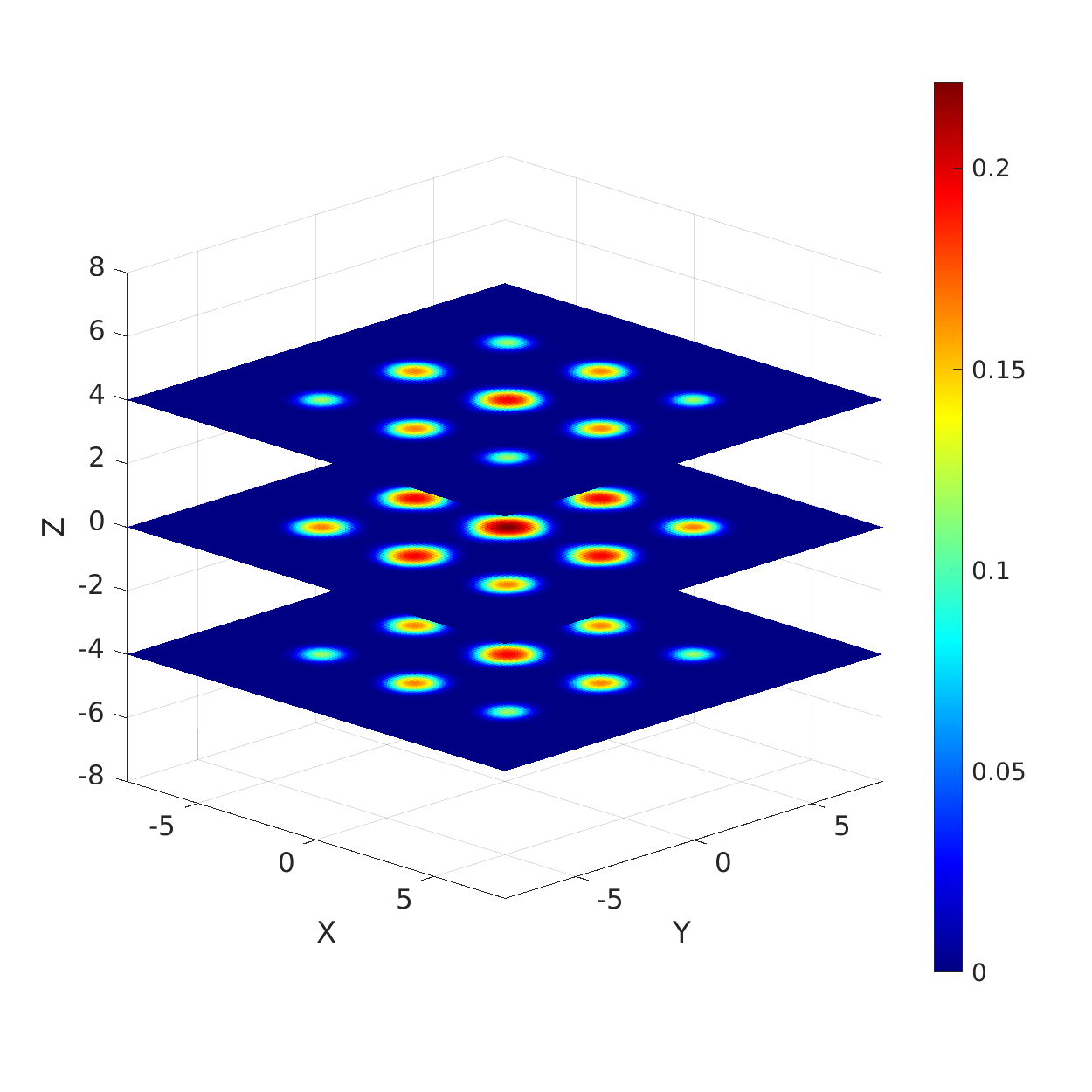} } 
    \end{center}
	\caption{A 3D example for a combined harmonic and optical lattice potential. Left is the isosurface of the ground state for isovalue $0.002$, and right is the slice view of the ground state.  For $\beta=1600$, using $Q^{40}$ spectral element method on a $5^3$ mesh, \eqref{modified-H1-scheme} with $\alpha=10$ and $\tau=0.1$ and $\mathbf u^0\equiv 1$ converges after 665 iterations. The online computation time is 6 seconds on Nvidia A100. $E(\mathbf u^*_h)=33.80227900547$ and $\lambda_h^*=80.89511440602$, consistent with the results in \cite{bao2006efficient}.}
	\label{figure-3d-example3}
\end{figure}

\section{Concluding remarks}
\label{sec:remarks}
We have considered the $H^1$ Sobolev gradient flow for finding the ground state of the Gross-Pitaevskii eigenvalue problem, under a modified $H^1$-norm. Global convergence to a critical point and the local exponential convergence rate have been established. 
Numerical experiments suggest that  the scheme with the spectral element method can be very efficient when using tuned parameters, which can be easily and efficiently implemented on modern GPUs.

\bibliographystyle{amsplain}
\bibliography{references}

\appendix

\section{Explicit finite difference formulation and discrete energy}
\label{apx:2nd_finite_diff_explicit}
	
We give explicit equivalent finite difference formulation of the $Q^k$ spectral element scheme \eqref{fd2}, especially the $Q^1$ case, which is equivalent to the second order finite difference.
We also give the explicit expressions of the $P^1$ finite element method on an unstructured mesh.
	
\subsection{The second order finite difference scheme}
	\label{appendix-FD}
For a one-dimensional uniform grid $-L=x_0<x_1<\cdots<x_n<x_{n+1}=L$ with grid spacing $h=\frac{2L}{n+1}$, for any vector $u_h\in V_0^h$ with $P^1$ polynomial basis, it can be represented by $\mathbf u=\begin{bmatrix} u_1 & u_2 & \cdots & u_n\end{bmatrix}^\top$ with $u_0=u_{n+1}=0$. The discrete  inner product is given by
\[\langle u_h, v_h\rangle=\frac12h u_0 v_0+\sum_{i=1}^n u_i v_i h+\frac12h u_n v_n=h \mathbf u^\top \mathbf v. \]
Define $ M=h \mathbb I_{n\times n}$  and $K=M^{-1}S$ where $S=\frac{1}{h}\begin{pmatrix} 2 & -1 & & & \\ -1 & 2 & -1 & &  \\ & \ddots & \ddots & \ddots & \\ & & -1 & 2 &-1 \\ & & & -1 & 2\end{pmatrix}_{n\times n}$. Then the matrices in \eqref{fd3} and \eqref{fd2} for 1D are $\mathbb M=M$, $\mathbb S=S$, and $-\Delta_h=K.$
	
For a two-dimensional problem on a uniform grid for the domain $[-L, L]^2$, assume there are $n\times n$ interior grid points. Let $U$, $V$ and $F$ denote 2D arrays of size $n\times n$ consisting of point values of $u_h(x_1, x_2), V(x_1, x_2), f(x_1, x_2)$ at grid points. Let $\mathrm{vec}(U)$ be the vector generated by arranging $U$ column by column. The scheme \eqref{fd3} becomes
\[ [S\otimes M + M\otimes S+\diag(\mathrm{vec}(V))] \mathrm{vec}(U)+\beta M\otimes M \mathrm{vec}(U^3)= \lambda_h M\otimes M \mathrm{vec}(U),\]
where $U^3$ denotes the entrywise cubic power. In 2D, \eqref{fd2} can be written as 
\[ [K\otimes I + I\otimes K+\diag(\mathrm{vec}(V))] \mathrm{vec}(U)+\beta \mathrm{vec}(U^3)=\lambda_h \mathrm{vec}(U).\]
With the property $(B^\top \otimes A)\mathrm{vec}(X) = \mathrm{vec}(AXB)$, it can be equivalently expressed as 
\[ KU+UK^\top+V\circ U+\beta U^3=\lambda_h U,\]
where $\circ$ represents Hadamard product, i.e., entrywise product. With similar notation as in \cite{liu2023simple}, the three-dimensional case of \eqref{fd2} can be expressed as
\[ [K\otimes I \otimes I+ I\otimes K\otimes I+I\otimes I\otimes K+\diag(\mathrm{vec}(V))] \mathrm{vec}(U)+\beta \mathrm{vec}(U^3)=\lambda_h \mathrm{vec}(U).\]
Thus, the matrix $-\Delta_h$ is given explicitly as follows:
\begin{equation}
    \label{2nd-Delta}   
	-\Delta_h=\begin{cases}
		K, & d=1, \\
		K\otimes I+I\otimes K, & d=2, \\
		K\otimes I\otimes I+I\otimes K\otimes I+I\otimes I\otimes K, & d=3, \\
	\end{cases}    
\end{equation}  
\begin{remark}
    The $Q^1$ scheme with quadrature gives exactly the same second-order centered difference for the interior grid. For Neumann boundary condition,  the $Q^1$ scheme with quadrature gives a slightly different scheme from a conventional finite difference scheme, see Remark 3.3 in \cite{10.1093/imanum/drac014}. When deriving finite difference from the finite element method, convergence is trivially implied by finite element error estimates.   
\end{remark}

\subsection{The $P^1$ finite element scheme on a simplicial mesh}
 \label{appendix-P1FEM}
Let $\Omega_h\subset\mathbb R^d$ be a simplicial mesh satisfying \eqref{simpicialmesh}. Let $\bx_i, i=1,\cdots, N$ denote all interior quadrature points. 
For any $u_h, v_h\in V_0^h$, since $\nabla u_h$ and $\nabla v_h$ are piecewise constants, the quadrature is exact. 
For an edge $E$ in a simplex $T$, let $\bx_i$ and $\bx_j$ be its ends and $\delta_E u_h$ denote $u_h(\bx_i)-u_h(\bx_j)=u_i-u_j$. 
By formulae in \cite{xu1999monotone}*{Section 2},   
$$\langle \nabla u_h, \nabla v_h \rangle=\int_{\Omega_h} \nabla u_h\cdot \nabla v_h \mathrm{d}\bx=\sum_{T\in \Omega_h}\sum_{E\subset T}\frac{1}{d(d-1)}|\kappa_E^T| \cot \theta_E^T \delta_E u_h \delta_E v_h,$$
which can also be written as
\begin{equation}
\label{discretegradient-P1FEM}
    \langle \nabla u_h, \nabla v_h \rangle= \sum_{E}\sum_{T\supset E} \frac{1}{d(d-1)}|\kappa_E^T| \cot \theta_E^T (u_i-u_j)(v_i-v_j),\quad \mbox{$\bx_i,\bx_j$ are two ends of the edge $E$}.
\end{equation}
By notation in Section \ref{sec-fem}, we have $\mathbf u^T \mathbb S\mathbf v=\langle \nabla u_h, \nabla v_h \rangle$, which implies that the stiffness matrix can be explicitly written as follows. The off-diagonal entries of $\mathbb S$ are given as
\[\mathbb S_{ij}= 
-\sum\limits_{T\supset E} \frac{1}{d(d-1)}|\kappa_E^T| \cot \theta_E^T, \quad i\neq j\quad \mbox{and $E$ is the edge  connecting $\bx_i$ and $\bx_j$.}
\]
And the diagonal entries of $\mathbb S$ can be obtained by the fact that each row sum of $\mathbb S$ should be zero.
\begin{wrapfigure}{r}{0pt}
\begin{tikzpicture}[scale=0.5]
    % draw the background
    \draw [line width=1.5pt, fill=gray!2] (0,0) -- (60:4) -- (4,0) -- cycle;

    \draw [line width=1.5pt, fill=gray!2]  (60:4) -- (4,0) --(7,2) -- cycle;

    \coordinate[label=left:]  (A) at (0,0);
    \coordinate[label=below:$\bx_j$] (B) at (4,0);
    \coordinate[label=above:$\bx_i$] (C) at (2,3.464);
    \coordinate[label=above: ] (D) at (7,2);

    % angle alpha
    \draw[fill=green!30] (0,0) -- (0:0.75cm) arc (0:60:.75cm);
    \draw (1.2cm,0.5cm) node {$\theta^1_{ij}$};
    
    \begin{scope}[shift={(7cm,2cm)}]
        \draw[fill=green!30] (0,0) -- (165:0.75cm) arc (165:210:0.75cm);
        \draw (-1.2cm,-0.2cm) node {$\theta^2_{ij}$};
    \end{scope}
    
    % the triangle
    \draw [line width=1.5pt] (A) -- (B) -- (C) -- cycle;
  \end{tikzpicture}
\end{wrapfigure}

In  a triangular mesh in two dimensions, for the edge connecting two interior vertices $\bx_i, \bx_j$, there are two angles $\theta^1_{ij}$ and $\theta^2_{ij}$ as shown in the figure, thus the stiffness matrix can be written as
\[\mathbb S_{ij}=\begin{cases}
-\frac{\cot \theta^1_{ij}+\cot \theta^2_{ij}}{2}, & \quad j\neq i,\\
-\sum_{j\neq i} \mathbb S_{ij}&\quad j=i.
\end{cases}\]
The necessary and sufficient condition for $\cot \theta^1_{ij}+\cot \theta^2_{ij}\geq 0$ is $\theta^1_{ij}+\theta^2_{ij}\leq \pi$.
For $\mathbb S$ to be an M-matrix, it suffices to have 
$\theta^1_{ij}+\theta^2_{ij}\leq \pi$, which can be achieved in a Delaunay triangulation.

\subsection{The discrete Laplacian from $Q^k$ scheme}

The full detail can be found in \cite{liu2023simple}. Let $S$ be the stiffness matrix and $M$ be the mass matrix $Q^k$ spectral element method in one dimension. In two dimensions \eqref{fd3} can be written as 
\[ [S\otimes M + M\otimes S+(M\otimes M) \diag(\mathrm{vec}(V))] \mathrm{vec}(U)=\lambda_h (M\otimes M)\mathrm{vec}(U).\]
Define $H=M^{-1}S$, the scheme \eqref{fd2} in two dimensions can  be written as 
\[ [H\otimes I + I\otimes H+\diag(\mathrm{vec}(V))] \mathrm{vec}(U)+\beta \mathrm{vec}(U^3)=\lambda_h \mathrm{vec}(U),\]
or equivalently $HU+UH^\top+V\circ U+\beta U^3=\lambda_h U,$
and in three dimensions it is  
\[ [H\otimes I \otimes I+ I\otimes H\otimes I+I\otimes I\otimes H+\diag(\mathrm{vec}(V))] \mathrm{vec}(U)+\beta \mathrm{vec}(U^3)=\lambda_h \mathrm{vec}(U).\]
It is possible to derive explicit entries of matrices $S, M, H$, see \cites{li2020superconvergence,shen2022discrete} for more details.
\begin{remark}
	For $k\geq 2$, the discrete Laplacian above give a $(k+2)$-th order finite difference scheme in discrete $l^2$-norm for solving elliptic equations \cite{li2020superconvergence} and for parabolic, wave and Schr\"odinger equations \cite{li2022accuracy}.   For solving a linear eigenvalue problem, e.g., $\beta=0$ in \eqref{fd2}, standard {\it a priori} error estimates for eigenvalues is that $Q^k$ spectral element method gives $2k$-th order of accuracy if assuming sufficient regularity. 
\end{remark}       
	
\subsection{Discrete energy of the second order finite difference}     
	The formula \eqref{discretegradient-P1FEM} can be  written out more explicitly when the vertices in the simplicial mesh form a Cartesian grid, i.e., when the $P^1$ finite element method becomes the the second order finite difference method. 
For the one-dimensional case, recall that $u_0=u_{n+1}=0$ for $u_h\in V_0^h$, we have
\[\forall~u_h, v_h\in V_0^h,\quad \langle \nabla u_h, \nabla v_h\rangle=h\sum_{i=1}^{n+1} \frac{u_{i}-u_{i-1}}{h}\cdot\frac{v_{i}-v_{i-1}}{h}.\]
Define the matrix $D=\frac{1}{h}\begin{pmatrix} 1 &  & &  \\ -1 & 1 &   &   \\ & \ddots & \ddots &  \\ & &  -1 & 1 \\ & & &   -1 \end{pmatrix}_{(n+1)\times n}.$ Then it satisfies $D^\top D=K$. In one dimension $\langle u_h, u_h \rangle=\mathbf u^\top \mathbb M\mathbf v=h \mathbf u^\top  \mathbf v$. Thus we have
\[  \langle \nabla u_h, \nabla v_h\rangle=\langle D\mathbf u, D\mathbf v\rangle_h=\langle  D^\top D \mathbf u, \mathbf v\rangle_h=\langle K\mathbf u, \mathbf v\rangle_h=\langle-\Delta_h \mathbf u,  \mathbf v\rangle_h. \]
In two dimensions, by plugging in the quadrature, for any $u_h, v_h\in V_0^h$, we have 
\begin{equation}
	\langle \nabla u_h, \nabla v_h\rangle=h^2\sum\limits_{i,j=1}^{n+1} \frac{u_{i,j}-u_{i-1,j}}{h}\cdot\frac{v_{i,j}-v_{i-1,j}}{h}+\frac{u_{i,j}-u_{i,j-1}}{h}\cdot\frac{v_{i,j}-v_{i,j-1}}{h}.\label{fem-grad} 
\end{equation}
With our notation for the two-dimensional problem, let $\mathbf u=\mathrm{vec}(U)$, then we have
\begin{align*}
    \langle \nabla u_h, \nabla v_h\rangle &= \langle DU, DV\rangle_h+ \langle UD, VD\rangle_h= \langle D^\top DU, V\rangle_h+ \langle UDD^\top, V\rangle_h,\\
    \langle \nabla u_h, \nabla v_h\rangle & =  \langle KU+UK, V\rangle_h=  \langle (K\otimes I+I\otimes K) \mathrm{vec}(U) , \mathrm{vec}(V)\rangle_h=\langle-\Delta_h \mathbf u,  \mathbf v\rangle_h.
\end{align*}
The three-dimensional case of the discrete gradient can be similarly written out.

\section{M-matrix and Perron-Frobenius theorem}
\label{sec-Mmatrix}
\subsection{M-matrix}
	
Nonsingular M-matrices are monotone. There are many equivalent definitions or characterizations of M-matrices, see \cite{plemmons1977m}. The following is a convenient sufficient but not necessary characterization of nonsingular M-matrices \cite{li2019monotonicity}:
\begin{theorem}
	\label{rowsumcondition-thm}
	For a real square matrix $A$  with positive diagonal entries and non-positive off-diagonal entries, $A$ is a nonsingular M-matrix if  all the row sums of $A$ are non-negative and at least one row sum is positive. 
\end{theorem}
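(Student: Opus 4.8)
The plan is to reduce the claim to a single strict spectral-radius inequality and then close that inequality with the Perron--Frobenius theorem from Appendix~\ref{sec-PF}. Because $A$ has non-positive off-diagonal entries, I would first write $A = sI - B$ with $s = \max_i A_{ii} > 0$ and $B = sI - A$. The hypotheses make $B \ge 0$ entrywise: its off-diagonal entries equal $-A_{ij} \ge 0$ and its diagonal entries equal $s - A_{ii} \ge 0$. By the standard equivalent characterization of M-matrices (see \cite{plemmons1977m}), $A$ is a nonsingular M-matrix precisely when $s > \rho(B)$, where $\rho(B)$ is the spectral radius of the non-negative matrix $B$. Thus everything comes down to proving $\rho(B) < s$.

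Next I would read off the row sums of $B$. Writing $R_i := \sum_j A_{ij} \ge 0$ for the $i$-th row sum of $A$, the $i$-th row sum of $B$ is
\[
\sum_j B_{ij} = (s - A_{ii}) - \sum_{j \ne i} A_{ij} = s - R_i \le s,
\]
and the hypothesis that some $R_{i_0} > 0$ forces $\sum_j B_{i_0 j} = s - R_{i_0} < s$. Since $B \ge 0$, the elementary bound $\rho(B) \le \max_i \sum_j B_{ij} \le s$ gives the non-strict inequality immediately.

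To upgrade this to the strict inequality I would invoke irreducibility of $A$ (equivalently of $B$), which holds for $A_{\mathbf u}$ as recorded in Section~\ref{sec:monotonicity}. For an irreducible non-negative matrix, Perron--Frobenius sharpens the row-sum bound to $\rho(B) \le \max_i \sum_j B_{ij}$ with equality \emph{only} when all row sums coincide. If the row sums of $B$ are not all equal, then $\rho(B) < \max_i \sum_j B_{ij} \le s$; if they are all equal, then they all equal the strictly-smaller-than-$s$ value $s - R_{i_0}$, so $\rho(B) = s - R_{i_0} < s$. Either way $\rho(B) < s$, and hence $A = sI - B$ is a nonsingular M-matrix, which is therefore monotone.

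The main obstacle is exactly this final upgrade, and it is where the bare hypotheses are not quite enough: irreducibility is essential. Indeed, a block-diagonal Z-matrix one of whose diagonal blocks has all row sums equal to zero (hence is singular) while another block has a positive row sum satisfies every stated assumption yet is singular, not a nonsingular M-matrix. Irreducibility --- guaranteed here by the connectivity of the finite-difference/finite-element stencil graph --- is precisely what prevents such a decoupling and pins the Perron root strictly below $s$.
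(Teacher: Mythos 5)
The paper never proves this theorem: it is quoted from \cite{li2019monotonicity} as a known sufficient condition, so there is no internal argument to compare yours against. Your route --- writing $A = sI - B$ with $s=\max_i A_{ii}$ and $B = sI - A \ge 0$, reducing nonsingular-M-matrix-ness to $\rho(B) < s$, bounding $\rho(B)$ by the row sums of $B$, and invoking the irreducible Perron--Frobenius refinement (strict inequality $\rho(B) < \max_i \sum_j B_{ij}$ unless all row sums of $B$ coincide) to close the gap between $\le$ and $<$ --- is the standard proof of this kind of statement, and every step of it is sound \emph{given irreducibility}.

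The substantive issue, which you have correctly diagnosed yourself, is that what you prove is not the theorem as stated: the statement does not assume irreducibility, your argument needs it, and your counterexample is valid --- e.g.\ $A = \bigl(\begin{smallmatrix}1&-1&0\\-1&1&0\\0&0&1\end{smallmatrix}\bigr)$ has positive diagonal, non-positive off-diagonal entries, non-negative row sums with one positive row sum, yet is singular. So the defect lies in Theorem~\ref{rowsumcondition-thm} as recorded in the appendix rather than in your reasoning: the correct hypotheses are either irreducible diagonal dominance (what you actually prove) or, more generally, the weakly chained diagonally dominant condition, in which every row with zero row sum is connected in the directed graph of $A$ to a row with positive row sum. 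None of the paper's applications are harmed, since $A_{\mathbf u}$ is always irreducible as noted in Section~\ref{sec:monotonicity}, but the theorem should be restated with irreducibility (or weak chaining) added before your proof can be said to establish it.
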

	
\subsection{Irreducible nonnegative matrices}
\label{sec-PF}
A matrix $A\in \mathbb C^{n\times n}$ is called {\it reducible} if there exists a permutation matrix $P$ such that $PAP^\top$ is block upper triangular. A matrix is irreducible if and only the graph it represents is strongly connected. 
	
\begin{lemma}
	For a nonsingluar irreducible matrix $A$, $A^{-1}$ is also irreducible.
\end{lemma}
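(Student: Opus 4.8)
The plan is to argue by contraposition: I will show that if $A^{-1}$ is reducible, then $A$ itself must be reducible, contradicting the hypothesis that $A$ is irreducible. The key algebraic fact I will exploit is that for a permutation matrix $P$ one has $P^\top = P^{-1}$, so that conjugation by $P$ commutes with inversion: $P A^{-1} P^\top = (P A P^\top)^{-1}$.

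First I would unfold the definition of reducibility applied to $A^{-1}$. By assumption there is a permutation matrix $P$ and an integer $1 \le k < n$ such that
\[
 P A^{-1} P^\top = \begin{pmatrix} B_{11} & B_{12} \\ 0 & B_{22}\end{pmatrix},
\]
where $B_{11}$ is $k\times k$ and $B_{22}$ is $(n-k)\times(n-k)$. Since $A$ is nonsingular, so is $A^{-1}$, and hence so is the conjugate $P A^{-1} P^\top$. Because the determinant of a block upper triangular matrix factors as $\det B_{11}\,\det B_{22}$, the nonvanishing of this determinant forces both diagonal blocks $B_{11}$ and $B_{22}$ to be invertible.

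Next I would invoke the standard formula for the inverse of a block upper triangular matrix with invertible diagonal blocks,
\[
 \begin{pmatrix} B_{11} & B_{12} \\ 0 & B_{22}\end{pmatrix}^{-1} = \begin{pmatrix} B_{11}^{-1} & -B_{11}^{-1}B_{12}B_{22}^{-1} \\ 0 & B_{22}^{-1}\end{pmatrix},
\]
which is verified by direct multiplication (the vanishing of the lower-left block follows from $B_{22}$ being invertible). Combining this with the identity $P A P^\top = (P A^{-1} P^\top)^{-1}$ shows that $P A P^\top$ is again block upper triangular with the same partition, so $A$ is reducible. This contradiction completes the argument.

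I do not anticipate a genuine obstacle here: the reasoning is purely linear-algebraic, and the only points requiring care are the elementary justification that the diagonal blocks are invertible (via the determinant factorization) and the verification of the block-inverse formula, both of which are routine. An alternative route through the strongly-connected-graph characterization of irreducibility would be considerably more cumbersome, since the sparsity pattern of $A^{-1}$ is governed by cofactors rather than directly by the entries of $A$; the permutation-conjugation argument sidesteps this difficulty entirely.
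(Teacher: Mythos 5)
Your argument is correct: since $P^\top=P^{-1}$ for a permutation matrix, conjugation by $P$ commutes with inversion, the determinant factorization forces the diagonal blocks of an invertible block upper triangular matrix to be invertible, and the block-inverse formula then shows that reducibility of $A^{-1}$ implies reducibility of $A$. The paper states this lemma without proof, so there is nothing to compare against; your permutation-conjugation route is the standard one and is cleaner than working with the strongly-connected-graph characterization, exactly as you observe.
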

	
The following results can be found in \cite{varga1999matrix}:

\begin{theorem}[Perron-Frobenius]
	\label{Perron-Frobenius}
	If $A\geq 0$ is irreducible, then:
	\begin{enumerate}
		\item The spectral radius $\rho(A)$ is a simple eigenvalue of $A$ with an eigenvector $x>0$. 
		\item $\rho(A)$ increases when any entry of $A$ increases. 
	\end{enumerate}
\end{theorem}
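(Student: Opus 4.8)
The plan is to prove both claims by combining the variational (Collatz--Wielandt) characterization of the spectral radius with the positivity forced by irreducibility. The starting observation is that irreducibility of $A\geq 0$ is equivalent to $(I+A)^{n-1}>0$ entrywise: the $(i,j)$ entry of $(I+A)^{n-1}$ counts walks of length at most $n-1$ from $i$ to $j$ in the directed graph of $A$, and strong connectivity guarantees at least one such walk. I would use this repeatedly to upgrade nonnegativity to strict positivity.

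For part (1), I would first introduce $r(x)=\min_{i:\,x_i>0}(Ax)_i/x_i$ for $x\geq 0$, $x\neq 0$, and set $r=\sup_x r(x)$, the supremum being attained on the compact simplex $\{x\geq 0:\sum_i x_i=1\}$ by some $z\geq 0$. I would show $Az=rz$: otherwise $Az-rz\geq 0$ is nonzero, so $(I+A)^{n-1}(Az-rz)>0$, which gives $Aw>rw$ for $w=(I+A)^{n-1}z>0$ and contradicts the maximality of $r$. Applying $(I+A)^{n-1}$ to the eigenvector $z$ then yields $z>0$, so $r$ is an eigenvalue with a strictly positive eigenvector. That $r=\rho(A)$ follows because for any eigenpair $(\lambda,v)$ the triangle inequality together with $A\geq 0$ gives $|\lambda|\,|v|\leq A|v|$ entrywise, hence $|\lambda|\leq r(|v|)\leq r$.

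Simplicity then splits into two steps. For geometric simplicity, if $v$ were a real $\rho$-eigenvector not proportional to $z$, I would choose $t$ so that $z-tv\geq 0$ has a vanishing coordinate; being a nonnegative $\rho$-eigenvector, it must by the argument above be either $0$ or strictly positive, and the vanishing coordinate forces $z-tv=0$, a contradiction (complex eigenvectors reduce to this case via real and imaginary parts). For algebraic simplicity I would differentiate the characteristic polynomial, using $\frac{d}{d\lambda}\det(\lambda I-A)=\mathrm{tr}\,\mathrm{adj}(\lambda I-A)$, and show that $\mathrm{adj}(\rho I-A)$ is a positive multiple of $xy^\top$, where $x>0$ and $y>0$ are the right and left Perron vectors; its trace is then strictly positive, so $\rho$ is a simple root. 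I expect this algebraic-simplicity step to be the main obstacle, since it requires identifying the rank-one structure and the sign of the adjugate rather than a direct positivity estimate.

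For part (2), let $\tilde A\geq A$ with at least one strictly larger entry; since the graph of $\tilde A$ contains that of $A$, the matrix $\tilde A$ is again irreducible. Taking the left Perron vector $y>0$ of $A$ and the right Perron vector $\tilde x>0$ of $\tilde A$, I would compute
\[
\rho(\tilde A)\,y^\top \tilde x = y^\top \tilde A\,\tilde x = y^\top A\,\tilde x + y^\top(\tilde A-A)\tilde x = \rho(A)\,y^\top \tilde x + y^\top(\tilde A-A)\tilde x .
\]
Because $y>0$, $\tilde x>0$, and $\tilde A-A\geq 0$ is nonzero, the last term is strictly positive while $y^\top\tilde x>0$, so $\rho(\tilde A)>\rho(A)$, which is exactly the strict monotonicity asserted.
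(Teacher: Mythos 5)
The paper offers no proof of this theorem: it is stated as a classical result imported from the reference \cite{varga1999matrix}, and only the subsequent Theorem~\ref{Perron-Frobenius-2} (uniqueness of the positive eigenvector) is proved in the text. Your argument is the standard Wielandt proof via the Collatz--Wielandt function, and it is correct in all essentials, so it supplies a self-contained proof where the paper supplies a citation. Two remarks on the details you leave implicit. First, the attainment of $r=\sup_x r(x)$ on the simplex deserves a word, since $r(\cdot)$ is not continuous there (the index set $\{i: x_i>0\}$ jumps on the boundary); the quick fix is that $r(x)=\max\{s\ge 0: Ax\ge sx\}$ is upper semicontinuous --- if $x^{(k)}\to x$ then $Ax^{(k)}\ge r(x^{(k)})x^{(k)}$ passes to the limit --- so the supremum over the compact simplex is achieved. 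Second, the step you single out as the main obstacle (algebraic simplicity) is easier than you fear: once geometric simplicity gives $\operatorname{rank}(\rho I-A)=n-1$, the adjugate $\mathrm{adj}(\rho I-A)$ is automatically nonzero of rank one, hence equal to $c\,xy^\top$ with $c\neq 0$; then $\frac{d}{d\lambda}\det(\lambda I-A)\big|_{\lambda=\rho}=\operatorname{tr}\mathrm{adj}(\rho I-A)=c\,y^\top x\neq 0$ because $y^\top x>0$, and you never need to determine the sign of $c$ (which is the part that would require comparing $\rho(A)$ with spectral radii of principal submatrices). Your part (2) is clean and complete as written.
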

	
\begin{theorem}
	\label{Perron-Frobenius-2}
	The positive eigenvector (Perron-Frobenius eigenvector)  for an irreducible nonnegative matrix is unique up to scalar multiplication.  
\end{theorem}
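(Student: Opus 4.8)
The plan is to combine the two conclusions of Theorem~\ref{Perron-Frobenius}---that $\rho(A)$ is a simple eigenvalue admitting a positive eigenvector---with a short argument showing that \emph{every} positive eigenvector must belong to the eigenvalue $\rho(A)$. Once the latter is established, uniqueness up to scaling is immediate: since $\rho(A)$ is simple, its eigenspace is one-dimensional, so any two positive eigenvectors are scalar multiples of each other.

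First I would fix a positive eigenvector $x>0$ with $Ax=\rho(A)x$, guaranteed by Theorem~\ref{Perron-Frobenius}. Let $y>0$ be any other positive eigenvector, say $Ay=\mu y$ for some scalar $\mu$. The key step is to prove $\mu=\rho(A)$. To this end I would apply Theorem~\ref{Perron-Frobenius} to the transpose $A^\top$, which is again nonnegative and irreducible (irreducibility is preserved under transposition, since transposing only reverses the edges of the associated graph while keeping it strongly connected). This yields a positive left eigenvector $w>0$ with $w^\top A=\rho(A^\top)\,w^\top=\rho(A)\,w^\top$, using that $A$ and $A^\top$ share the same eigenvalues and hence the same spectral radius. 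Pairing this with $Ay=\mu y$ gives
\[
\rho(A)\,w^\top y = w^\top A y = \mu\, w^\top y,
\]
and since $w>0$ and $y>0$ force $w^\top y>0$, we conclude $\mu=\rho(A)$.

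Finally, both $x$ and $y$ now lie in the eigenspace of the eigenvalue $\rho(A)$. Because Theorem~\ref{Perron-Frobenius} asserts that $\rho(A)$ is a \emph{simple} eigenvalue, this eigenspace is one-dimensional, so $y=c\,x$ for some scalar $c$; positivity of $x$ and $y$ then forces $c>0$. This proves uniqueness up to (positive) scalar multiplication.

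The main obstacle is the intermediate claim that an arbitrary positive eigenvector must correspond to $\rho(A)$ rather than to some other eigenvalue; the rest is bookkeeping given Theorem~\ref{Perron-Frobenius}. The left-eigenvector pairing above is the cleanest route, and its only nontrivial ingredient is that irreducibility and nonnegativity pass to $A^\top$, which I would justify via the strongly-connected-graph characterization of irreducibility already recalled in Appendix~\ref{sec-PF}.
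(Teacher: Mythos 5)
Your proof is correct and follows essentially the same route as the paper: both arguments pair an arbitrary positive eigenvector $y$ against a positive \emph{left} Perron--Frobenius eigenvector to force the eigenvalue to equal $\rho(A)$, then invoke simplicity of $\rho(A)$ from Theorem~\ref{Perron-Frobenius}. Your version is slightly more careful in justifying the existence of the positive left eigenvector via $A^\top$, which the paper takes for granted.
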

\begin{proof}
	Let $x>0$ be the left Perron-Frobenius eigenvector then $x^\top A=\rho(A) x^\top$. If there exists another eigenvector $y>0$ for an eigenvalue $\lambda$, then $Ay=\lambda y\implies x^\top A y=\lambda x^\top y$. Since $x^\top A=\rho(A) x^\top \implies x^\top A y=\rho(A) x^\top y$, we get $(\rho(A)-\lambda) x^\top y=0$ and $x^\top y>0\implies \rho(A)=\lambda$. Thus there is only one eigenvalue, i.e., $\rho(A)$, with positive eigenvectors, and $\rho(A)$ is a simple eigenvalue by Theorem~\ref{Perron-Frobenius}.
\end{proof}
	
\section{Deferred proofs for Section~\ref{sec:global_converge}}
\label{apx:pf_global}

\subsection{Norm equivalence and standard regularity results}
 
\begin{lemma}
    There are positive constants $D_1, D_2, D_3$ independent of mesh size $h$, such that the followings hold for any $v_h\in V_0^h$:
    \begin{equation}
        \frac{1}{D_p} |\langle v_h^p, v_h^p\rangle |^{\frac{1}{2p}} \leq \|v_h\|_{L^{2p}(\Omega)}   \leq D_p |\langle v_h^p, v_h^p\rangle |^{\frac{1}{2p}},\quad p=1,2,3.\label{discretpnorm-equivalence}
    \end{equation}
\end{lemma}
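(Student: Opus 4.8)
The plan is to reduce the claimed global equivalence to a purely local (element-by-element) statement and then to an equivalence of two norms on a fixed finite-dimensional polynomial space via an affine change of variables. First I would record that, by the definition of the discrete inner product and the fact that the $\bx_i$ are the Lagrange nodes of the basis, $\langle v_h^p, v_h^p\rangle = \sum_{i=1}^N w_i\, v_h(\bx_i)^{2p}$, which splits as a sum over elements $e\in\Omega_h$ of the local contributions $\sum_{\bx_i\in e} w_i\, v_h(\bx_i)^{2p}$; likewise $\|v_h\|_{L^{2p}(\Omega)}^{2p} = \sum_{e} \int_e |v_h|^{2p}\,\mathrm{d}\bx$. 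Raising \eqref{discretpnorm-equivalence} to the power $2p$, it therefore suffices to prove a local equivalence
\[\hat c_p^{2p}\int_e |v_h|^{2p}\,\mathrm{d}\bx \;\le\; \sum_{\bx_i\in e} w_i\, v_h(\bx_i)^{2p} \;\le\; \hat C_p^{2p}\int_e |v_h|^{2p}\,\mathrm{d}\bx\]
on each element with constants $\hat c_p,\hat C_p>0$ uniform over all elements, since summing over $e$ then yields the global bound with $D_p=\max\{\hat C_p,\hat c_p^{-1}\}$.

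Next I would pull everything back to a reference element $\hat e$ (the unit simplex for $P^k$ or $[0,1]^d$ for $Q^k$) through the affine map $\bx=B_e\hat\bx+b_e$. On $\hat e$ both functionals
\[\hat v\;\mapsto\;\Big(\int_{\hat e}|\hat v|^{2p}\,\mathrm{d}\hat\bx\Big)^{1/(2p)}\quad\text{and}\quad \hat v\;\mapsto\;\Big(\sum_i \hat w_i\,\hat v(\hat\bx_i)^{2p}\Big)^{1/(2p)}\]
are genuine norms on the finite-dimensional space $P^k(\hat e)$ (resp.\ $Q^k(\hat e)$): the first because it is the $L^{2p}$ norm restricted to polynomials, and the second because the Gauss--Lobatto weights (resp.\ the $P^1$ vertex / $P^2$ vertex-and-edge-midpoint weights) are strictly positive and the nodes $\hat\bx_i$ are unisolvent, so the weighted $\ell^{2p}$ norm of the nodal-value vector vanishes only for $\hat v\equiv 0$. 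Since any two norms on a finite-dimensional space are equivalent, there exist constants $\hat c_p,\hat C_p>0$ depending only on $\hat e$, $k$, $p$, and $d$ realizing the reference-element version of the displayed local inequality.

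The crucial observation that makes the constants mesh-independent is that the affine map scales the two sides \emph{identically}: $\int_e|v_h|^{2p}\,\mathrm{d}\bx=|\det B_e|\int_{\hat e}|\hat v|^{2p}\,\mathrm{d}\hat\bx$, while the mapped data satisfy $w_i=|\det B_e|\,\hat w_i$ and $v_h(\bx_i)=\hat v(\hat\bx_i)$, so $\sum_{\bx_i\in e} w_i\, v_h(\bx_i)^{2p}=|\det B_e|\sum_i\hat w_i\,\hat v(\hat\bx_i)^{2p}$. The common factor $|\det B_e|$ cancels, so the local equivalence holds on every $e$ with exactly the reference constants $\hat c_p,\hat C_p$, independent of $h$ and, notably, free of any shape-regularity requirement. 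Taking $2p$-th roots and summing over elements completes the argument for each $p=1,2,3$. The only point demanding care---and the main obstacle---is verifying the two prerequisites for the quadrature functional to be a norm, namely positivity of all quadrature weights and unisolvence of the node set on the reference element; these hold for the Gauss--Lobatto rule and for the stated simplicial rules, but they are exactly the hypotheses one must not skip, since positivity can fail for some higher-order simplex rules and would break the lower bound.
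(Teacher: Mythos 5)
Your proposal is correct and follows essentially the same route as the paper: localize to elements, pull back to a reference element, observe that both the $L^{2p}$ norm and the weighted nodal $\ell^{2p}$ functional are norms on the finite-dimensional polynomial space (using positive weights and unisolvence of the nodes), invoke equivalence of norms there, and note that the Jacobian factor scales both sides identically so the constants are mesh-independent. The only difference is that you make explicit two points the paper leaves implicit --- the positivity/unisolvence prerequisites and the cancellation of $|\det B_e|$ --- which is a welcome clarification rather than a different argument.
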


\begin{proof}
    We first prove it for $Q^k$ spectral element method.
    Consider a cubic cell $e=[x_1^e-\frac{h}{2}, x_1^e+\frac{h}{2}]\times \cdots \times [x_d^e-\frac{h}{2}, x_d^e+\frac{h}{2}]\in\Omega_h$ and a reference cell $\hat K=[-1,1]^3$. For $v_h({\bf x})$ defined on $e$, consider $\hat v_h({\bf t})=v_h\left(t_1\frac{h}{2}+x_1^e,\cdots,t_d\frac{h}{2}+x_d^e\right)$, which is defined on $\hat K$. Let $\langle \hat v_h, \hat v_h \rangle_{\hat K}$ denote the approximation to the integral $\int_{\hat K} |\hat v_h({\bf t})|^2\mathrm{d}{\bf t}$ by $(k+1)$-point Gauss Lobatto quadrature for each variable. Since both $\sqrt{\langle \hat v_h, \hat v_h \rangle_{\hat K}}$ and  $\sqrt{\int_{\hat K} |\hat v_h({\bf t})|^2\mathrm{d}{\bf t}}$ are norms of  $Q^k(\hat K)$, by the equivalence of any two norms on the finite-dimensional space $Q^k(\hat K)$, we have 
    \[ \forall~\hat v_h\in Q^k(\hat K),\quad \frac{1}{D_1} \sqrt{\langle \hat v_h, \hat v_h \rangle_{\hat K}}\leq \sqrt{\int_{\hat K}  |\hat v_h({\bf t})|^2\mathrm{d}{\bf t}}\leq D_1\sqrt{\langle \hat v_h, \hat v_h \rangle_{\hat K}}.\]
    By mapping back to $e$, and summing over $e$, we get \eqref{discretpnorm-equivalence} for $p=1$.

    With the same notation and arguments above, for a $Q^k$ polynomial $v_h$ with $(k+1)$-point Gauss-Lobatto quadrature, let $\hat w_i$ and $\hat v_i$ be quadrature weights and quadrature node values on the reference cell $\hat K$, then we have $|\langle \hat v^2_h, \hat v^2_h \rangle_{\hat K}|^{\frac14}=\left|\sum_{i=1}^{(k+1)^d} \hat w_i \hat{  v}^4_i\right|^{\frac14}$, which can be easily verified to be a norm of $\mathbb R^{(k+1)^d}$ thus a norm of $Q^k(\hat K)$. By the equivalence of any two norms on $Q^k(\hat K)$, we have
    \[ \forall~\hat v_h\in Q^k(\hat K),\quad \frac{1}{D_2} \left|\langle \hat v^2_h, \hat v^2_h \rangle_{\hat K}\right|^{\frac{1}{4}}\leq \|\hat v_h \|_{L^4(\hat K)} \leq D_2 \left|\langle \hat v^2_h, \hat v^2_h \rangle_{\hat K}\right|^{\frac{1}{4}}.\]
    By mapping back to $e$, and summing over $e$, we get \eqref{discretpnorm-equivalence} for $p=2$. The proof of $p=3$ is almost identical to the case $p=3$.
    Finally, the same proof also applies to  $P^k$ finite element method on a simplicial mesh with quadrature.
\end{proof}

Second, we want to show the $X$-norm is equivalent to $H^1$-norm for piecewise polynomials in $V_0^h$. By Lemma 5.1 in \cite{li2020superconvergence}, we have the following standard $V^h$-ellipticity result for a domain  on which elliptic regularity holds, e.g., $\Omega=[-L, L]^d$.
\begin{lemma}
    Let $\|v_h\|_X = \sqrt{\langle \nabla v_h , \nabla v_h \rangle+\alpha \langle  v_h ,  v_h \rangle} = \|\mathbf v\|_X$. 
    Let $\Omega$ be a domain with elliptic regularity. There is a constant $D_4>0$ independent of mesh size $h$ s.t.
    \begin{equation}
        \label{Xnorm-equivalency}
        \forall~v_h\in V_0^h, \quad \frac{1}{D_4} \|v_h\|_X \leq \|v_h\|_{H^1(\Omega)} \leq D_4 \|v_h\|_X.
    \end{equation} 
\end{lemma}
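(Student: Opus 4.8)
The plan is to prove \eqref{Xnorm-equivalency} by establishing, separately and with constants independent of $h$, that the quadrature mass form $\langle v_h,v_h\rangle$ is comparable to $\|v_h\|_{L^2(\Omega)}^2$ and that the quadrature stiffness form $\langle\nabla v_h,\nabla v_h\rangle$ is comparable to $\|\nabla v_h\|_{L^2(\Omega)}^2$. Since $\alpha>0$, adding these two estimates with weight $\alpha$ then immediately shows that $\|v_h\|_X^2=\langle\nabla v_h,\nabla v_h\rangle+\alpha\langle v_h,v_h\rangle$ and $\|v_h\|_{H^1(\Omega)}^2=\|\nabla v_h\|_{L^2(\Omega)}^2+\|v_h\|_{L^2(\Omega)}^2$ are comparable, with $D_4$ depending only on $\alpha$ and the two comparison constants. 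The mass part requires nothing new: it is exactly the $p=1$ case of \eqref{discretpnorm-equivalence}, so I would simply invoke that estimate.

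For the stiffness part I would reuse the reference-cell argument already employed for \eqref{discretpnorm-equivalence}. On $\hat K=[-1,1]^d$, consider the two maps $\hat v_h\mapsto \langle\nabla\hat v_h,\nabla\hat v_h\rangle_{\hat K}^{1/2}$ and $\hat v_h\mapsto\|\nabla\hat v_h\|_{L^2(\hat K)}$ on the finite-dimensional space $Q^k(\hat K)$ (respectively $P^k(\hat K)$). Both are seminorms vanishing exactly on the constants. The exact one is clear; for the quadrature one I would note that each partial derivative $\partial_{x_i}\hat v_h$ has degree at most $k-1$ in $x_i$, so its vanishing at the $k+1$ Gauss--Lobatto nodes in that variable forces it to vanish identically, whence the positive-weight quadrature seminorm is zero only when $\hat v_h$ is constant (for $P^k$ the same holds, using that the $(k+1)$-order simplex quadrature nodes are $P^{k-1}$-unisolvent). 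Passing to the finite-dimensional quotient $Q^k(\hat K)/\mathbb R$, both descend to genuine norms and are therefore equivalent, producing reference-cell constants $\hat c_1,\hat c_2$. Mapping back to a mesh cell $e$ by the affine change of variables multiplies both quantities by the same scaling power of $h$ (for $P^k$ on unstructured meshes this scaling is controlled through shape-regularity), so the ratio is unaffected; summing the cell-wise estimates over $e\in\Omega_h$ yields the $h$-independent comparison for the stiffness form.

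The same conclusion can instead be quoted directly from the standard $V^h$-ellipticity result (the cited Lemma~5.1), where the elliptic regularity of $\Omega$ enters; this is the route the surrounding text takes, and in a polished write-up I would cite it and relegate the reference-cell computation to a remark. The step I expect to be the only genuine obstacle is the lower bound $\langle\nabla v_h,\nabla v_h\rangle\gtrsim\|\nabla v_h\|_{L^2(\Omega)}^2$: the upper bound is merely boundedness of a positive-weight quadrature rule, whereas the lower bound requires that the quadrature not annihilate any nonconstant discrete gradient, uniformly in $h$, which is precisely what the degree count on $\partial_{x_i}\hat v_h$ secures. Because $k$ is fixed, the reference-cell constants are harmless, and no dependence on the mesh size creeps in.
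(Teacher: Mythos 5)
Your proposal is correct, but it does more than the paper does: the paper offers no proof of this lemma at all, simply quoting the standard $V^h$-ellipticity result (Lemma~5.1 of the cited reference \cite{li2020superconvergence}), whereas you supply a self-contained argument. Your route is the natural one and is consistent with how the paper proves the companion estimate \eqref{discretpnorm-equivalence}: reduce to the reference element, observe that the exact and quadrature stiffness forms are both seminorms on the finite-dimensional space $Q^k(\hat K)$ (resp.\ $P^k(\hat K)$) with the same kernel (the constants), pass to the quotient to get equivalence, and scale back, the affine change of variables contributing the same power of $h$ to both sides (with shape regularity absorbing the Jacobian distortion on simplicial meshes). You also correctly isolate the only nontrivial step, namely that the positive-weight quadrature does not annihilate any nonconstant discrete gradient, and your degree count is sound: $\partial_{x_i}\hat v_h$ has degree at most $k-1$ in $x_i$, so vanishing at the $k+1$ Gauss--Lobatto nodes along each coordinate line forces it to vanish identically by tensor-product unisolvence. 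Combined with the $p=1$ case of \eqref{discretpnorm-equivalence} for the mass part and the fact that $\alpha>0$ is fixed, the equivalence of $\|\cdot\|_X$ and $\|\cdot\|_{H^1(\Omega)}$ follows with $D_4$ depending only on $\alpha$, $k$, and the reference-cell constants. One small observation: your argument never uses elliptic regularity of $\Omega$, and indeed for $\alpha>0$ none is needed; that hypothesis in the lemma statement is inherited from the cited $V^h$-ellipticity result rather than being essential to the norm equivalence itself. What the citation route buys is brevity; what your route buys is a proof that visibly depends on nothing beyond positivity of the quadrature weights, unisolvence, and shape regularity.
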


\subsection{Proofs of Lemma~\ref{lem:esti_retraction} and Lemma~\ref{lemma-norm}}

\begin{proof}[Proof of Lemma~\ref{lem:esti_retraction}]
	Since $\langle \mathbf u,\mathbf v\rangle_h =0$, we have that $\| \mathbf u+\mathbf v\|_2^2 = \| \mathbf u\|_2^2 + \| \mathbf v\|_2^2  = 1 + \| \mathbf v\|_2^2$. This implies that
	\begin{equation*}
		R_h(\mathbf u+\mathbf v) - (\mathbf u+\mathbf v) = \left(\frac{1}{\| \mathbf u+\mathbf v\|_2}-1\right)(\mathbf u+\mathbf v) = \left(\left( 1 +  \| \mathbf v\|_2^2\right)^{-\frac{1}{2}} - 1\right)(\mathbf u+\mathbf v).
	\end{equation*}
	Then \eqref{esti_retraction} follows from the elementary inequality $1-\frac{x}{2}\leq (1+x)^{-1/2}\leq 1,\ x\geq 0$.
\end{proof}

\begin{proof}[Proof of Lemma~\ref{lemma-norm}] 
    (i) $\norm{\mathbf u}_X^2 = \mathbf u^\top (\mathbb S+\alpha \mathbb M) \mathbf u \geq \alpha \mathbf u^\top  \mathbb M \mathbf u =\alpha \norm{\mathbf u}_2^2$.
		
	(ii)  By Cauchy-Schwartz inequality for the inner product $\langle\cdot, \cdot\rangle_h$ and (i), we get
	\[ \resizebox{\hsize}{!}{$\norm{(-\Delta_h+\alpha \mathbb I)^{-1}\mathbf u}_X^2 = \left\langle (-\Delta_h+\alpha \mathbb I)^{-1}\mathbf u, \mathbf u\right\rangle_h  \leq   \norm{(-\Delta_h+\alpha \mathbb I)^{-1}\mathbf u}_2 \norm{\mathbf u}_2 \leq \frac{1}{\sqrt{\alpha}}\norm{(-\Delta_h+\alpha \mathbb I)^{-1}\mathbf u}_X \norm{\mathbf u}_2.$}\]
  
    (iii) Since $\Pi(\mathbf u^2)$ coincides with $u_h^2$ at the quadrature nodes, by \eqref{discretpnorm-equivalence} for $p=2$, 
	$$\|\mathbf u^2\|_2=\sqrt{\langle \mathbf u^2,\mathbf u^2\rangle_h}=\sqrt{\langle \Pi(\mathbf u^2), \Pi(\mathbf u^2)\rangle }=\sqrt{\langle u_h^2, u_h^2\rangle }\leq D_2^2 \|u_h\|^2_{L^4(\Omega)}.$$
	With the Sobolev embedding $H^1(\Omega)\subset L^4(\Omega)$ for dimension $d\leq 3$, and \eqref{Xnorm-equivalency}, we get
    \[ \|u_h\|_{L^4(\Omega)}^2\leq D^2 \|u_h\|_{H^1(\Omega)}^2 \leq D^2 D_4^2\|\mathbf u\|_{X}^2, \]
    where $D>0$ is the constant associated with the embedding $H^1(\Omega)\subset L^4(\Omega)$.

    (iv) The proof is almost identical to that of (iii), which uses \eqref{discretpnorm-equivalence} for $p=3$, \eqref{Xnorm-equivalency}, and the Sobolev embedding $H^1(\Omega)\subset L^6(\Omega)$ for dimension $d=1,2,3$.
\end{proof}

\section{Deferred proofs for Section~\ref{sec:local_converge}}
\label{sec:appendix-D}
\begin{proof}[Proof of Lemma~\ref{lem:local_convex}]
    Since $A_{\mathbf u^*}$ is self-adjoint w.r.t. $\langle\cdot,\cdot\rangle_h$, it has orthonormal eigenvectors. Let $\mathbf u_\parallel = \langle \mathbf u, \mathbf u^*\rangle_h\mathbf u^*$ be the orthogonal projection of $\mathbf u$ onto the subspace spanned by $\mathbf u^*$. Let $\mathbf u_\perp = \mathbf u - \mathbf u_\parallel$, then $\langle \mathbf u_\parallel,\mathbf u_\perp\rangle_h = 0$ and $\|\mathbf u_\parallel\|_2^2 + \|\mathbf u_\perp\|_2^2 = 1$. Thus, 
    \begin{align*}
        &\mathbf E_h(\mathbf u) - \mathbf E_h(\mathbf u^*) =\frac12 \langle-\Delta_h \mathbf u,  \mathbf u\rangle_h +\frac12  \langle\mathbb V \mathbf u,  \mathbf u\rangle_h+\frac{\beta}{4}  \langle \mathbf u^2,  \mathbf u^2\rangle_h \\
        &\quad\qquad - \frac12 \langle-\Delta_h \mathbf u^*,  \mathbf u^*\rangle_h - \frac12  \langle\mathbb V \mathbf u^*,  \mathbf u^*\rangle_h - \frac{\beta}{4}  \langle (\mathbf u^*)^2,  (\mathbf u^*)^2\rangle_h \\
        &\quad\geq   \frac12 \langle-\Delta_h \mathbf u,  \mathbf u\rangle_h +\frac12  \langle\mathbb V \mathbf u,  \mathbf u\rangle_h + \frac{\beta}{2}\langle (\mathbf u^*)^2 ,\mathbf u^2\rangle_h \\
        &\quad\qquad - \frac12 \langle-\Delta_h \mathbf u^*,  \mathbf u^*\rangle_h - \frac12  \langle\mathbb V \mathbf u^*,  \mathbf u^*\rangle_h - \frac{\beta}{2}  \langle (\mathbf u^*)^2,  (\mathbf u^*)^2\rangle_h \\
        &\quad= \frac{1}{2} \langle A_{\mathbf u^*} \mathbf u,\mathbf u\rangle_h - \frac{1}{2} \langle A_{\mathbf u^*} \mathbf u^*,\mathbf u^*\rangle_h=\frac{1}{2} \langle A_{\mathbf u^*} \mathbf u_\parallel,\mathbf u_\parallel\rangle_h +\frac{1}{2} \langle A_{\mathbf u^*} \mathbf u_\perp,\mathbf u_\perp\rangle_h - \frac{\lambda^0_h}{2}  \\
        &\quad\geq  \frac{\lambda^0_h}{2} \norm{\mathbf u_\parallel}_2^2 + \frac{\lambda^1_h}{2}\norm{\mathbf u_\perp}_2^2 - \frac{\lambda_0}{2}= \frac{\lambda^1_h - \lambda^0_h}{2}\norm{\mathbf u_\perp}_2^2.
    \end{align*}
    With the following fact, \eqref{eq:local_convex} follows from the estimate above:
    \begin{align*}
        \norm{\mathbf u_\perp}_2^2 = 1 - \norm{\mathbf u_\parallel}_2^2 = 1 - \left|\langle \mathbf u,\mathbf  u^*\rangle_h\right|^2 = 1 - \frac{1}{4}\left(2 - \norm{\mathbf u - \mathbf u^*}_2^2\right)^2 = \norm{\mathbf u - \mathbf u^*}_2^2  - \frac{1}{4}\norm{\mathbf u - \mathbf u^*}_2^4.
    \end{align*}
\end{proof}

\begin{lemma}\label{lem:lipchitz-u3}
    There is some constant $L_1>0$ independent of the mesh size $h$ s.t.
    \begin{equation*}
        \forall~\mathbf u,\mathbf v\in\bR^N,\quad  \norm{(-\Delta_h + \alpha \mathbb I)^{-1} (\mathbf u^3 - \mathbf v^3)}_X \leq L_1 \norm{\mathbf u-\mathbf v}_X \left(\norm{\mathbf u}_X^4+\norm{\mathbf v}_X^4\right).
    \end{equation*}
\end{lemma}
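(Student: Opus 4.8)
The plan is to reduce the estimate to a bound on the discrete $L^2$-norm $\norm{\mathbf u^3 - \mathbf v^3}_2$ of the entrywise cubic difference, and then to control that quantity purely algebraically using the discrete H\"older inequality together with the embedding estimates already packaged in Lemma~\ref{lemma-norm}. First I would apply Lemma~\ref{lemma-norm}(ii) to the vector $\mathbf u^3 - \mathbf v^3$, which gives $\norm{(-\Delta_h + \alpha\mathbb I)^{-1}(\mathbf u^3 - \mathbf v^3)}_X \le \tfrac{1}{\sqrt{\alpha}}\norm{\mathbf u^3 - \mathbf v^3}_2$, removing the Laplacian inverse. Next I would use the entrywise factorization $u_i^3 - v_i^3 = (u_i - v_i)(u_i^2 + u_i v_i + v_i^2)$, so that, writing $w_i$ for the quadrature weights, $\norm{\mathbf u^3 - \mathbf v^3}_2^2 = \sum_i w_i (u_i - v_i)^2 (u_i^2 + u_i v_i + v_i^2)^2$.

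The key step is a discrete H\"older inequality applied to this sum with conjugate exponents $3$ and $3/2$:
\[
\sum_i w_i (u_i - v_i)^2 (u_i^2 + u_i v_i + v_i^2)^2 \le \Big(\sum_i w_i |u_i - v_i|^{6}\Big)^{1/3}\Big(\sum_i w_i |u_i^2 + u_i v_i + v_i^2|^{3}\Big)^{2/3}.
\]
The first factor equals $\norm{(\mathbf u - \mathbf v)^3}_2^{2/3}$, which Lemma~\ref{lemma-norm}(iv) bounds by $C_2^{2/3}\norm{\mathbf u - \mathbf v}_X^{2}$. For the second factor I would use the pointwise estimate $|u_i^2 + u_i v_i + v_i^2|^3 \le (\tfrac32)^3(u_i^2 + v_i^2)^3 \le C(u_i^6 + v_i^6)$, so that $\sum_i w_i |u_i^2 + u_i v_i + v_i^2|^3 \le C(\norm{\mathbf u^3}_2^2 + \norm{\mathbf v^3}_2^2) \le CC_2^2(\norm{\mathbf u}_X^6 + \norm{\mathbf v}_X^6)$, again by Lemma~\ref{lemma-norm}(iv) (this is where the Sobolev embedding $H^1\hookrightarrow L^6$ enters); raising to the power $2/3$ and using subadditivity of $t\mapsto t^{2/3}$ produces a factor $\le C(\norm{\mathbf u}_X^4 + \norm{\mathbf v}_X^4)$. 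Assembling the two factors and taking a square root yields the sharp, homogeneity-consistent bound $\norm{(-\Delta_h + \alpha\mathbb I)^{-1}(\mathbf u^3 - \mathbf v^3)}_X \le C\,\norm{\mathbf u - \mathbf v}_X (\norm{\mathbf u}_X^4 + \norm{\mathbf v}_X^4)^{1/2}$, with $C$ independent of the mesh size $h$.

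The stated inequality is then obtained from this sharp bound via the elementary estimate $(\norm{\mathbf u}_X^4 + \norm{\mathbf v}_X^4)^{1/2} \le \norm{\mathbf u}_X^4 + \norm{\mathbf v}_X^4$. I expect the main obstacle to be precisely this final matching of prefactors: the left-hand side of the lemma is homogeneous of degree three in $(\mathbf u,\mathbf v)$, whereas the right-hand side as written is homogeneous of degree five, so the natural and sharp estimate carries a square root on the prefactor and the two forms coincide only up to this degree discrepancy (for instance the step $(\cdot)^{1/2}\le(\cdot)$ is exact only where the quartic quantity is at least one). In the sole place where the lemma is invoked, namely the proof of Lemma~\ref{lem:lipchitz-grad} with $\mathbf u,\mathbf v$ in a fixed neighbourhood of the nonzero ground state $\mathbf u^*$, the $X$-norms stay bounded below, so the stated quartic form is equivalent to the sharp degree-three bound and holds as written; equivalently, one may record the sharp estimate with prefactor $\norm{\mathbf u}_X^2 + \norm{\mathbf v}_X^2$, which is what the subsequent Lipschitz argument consumes.
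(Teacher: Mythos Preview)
Your approach is essentially the same as the paper's: apply Lemma~\ref{lemma-norm}(ii), factor $u_i^3-v_i^3$, use the discrete H\"older inequality with exponents $3$ and $3/2$, and close with Lemma~\ref{lemma-norm}(iv). The only difference is cosmetic ordering: the paper first applies the pointwise bound $(u_i^2+u_iv_i+v_i^2)^2\le 8(u_i^4+v_i^4)$ to split the sum in two and then applies H\"older to each piece, whereas you apply H\"older first and the pointwise bound afterwards; both routes yield the same estimate.

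Your remark on homogeneity is well taken and in fact applies equally to the paper's own proof, which terminates at
\[
\norm{(-\Delta_h+\alpha\mathbb I)^{-1}(\mathbf u^3-\mathbf v^3)}_X^2 \le \tfrac{8C_2^2}{\alpha}\,\norm{\mathbf u-\mathbf v}_X^2\bigl(\norm{\mathbf u}_X^4+\norm{\mathbf v}_X^4\bigr),
\]
i.e.\ the square-root version after taking roots. As you observe, this is exactly what Lemma~\ref{lem:lipchitz_nablaX} actually consumes under the a~priori bound $\norm{\mathbf u}_X,\norm{\mathbf v}_X\le 2\norm{\mathbf u^*}_X$, so the discrepancy is immaterial for the downstream argument.
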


\begin{proof}
    Using Lemma~\ref{lemma-norm} (ii) and (iv), we can compute that
    \begin{align*}
        & \norm{(-\Delta_h + \alpha \mathbb I)^{-1} (\mathbf u^3 - \mathbf v^3)}_X^2 
        \leq  \frac{1}{\alpha} \norm{\mathbf u^3 - \mathbf v^3}_2^2 = \frac{1}{\alpha}\sum_i w_i(u_i^3 - v_i^3 )^2 \\
        &\quad= \frac{1}{\alpha} \sum_i w_i(u_i - v_i )^2 (u_i^2 + u_i v_i + v_i^2)^2 
        \leq   \frac{8}{\alpha} \sum_i w_i(u_i - v_i )^2 u_i^4 + \frac{8}{\alpha} \sum_i w_i(u_i - v_i )^2 v_i^4 \\
        &\quad\leq \frac{8}{\alpha} \left(\sum_i w_i(u_i - v_i )^6\right)^{1/3} \left(\sum_i w_i u_i^6\right)^{2/3} + \frac{8}{\alpha} \left(\sum_i w_i(u_i - v_i )^6\right)^{1/3} \left(\sum_i w_i v_i^6\right)^{2/3} \\
        &\quad= \frac{8}{\alpha} \norm{(\mathbf u - \mathbf v)^3}_2^{2/3}\left(\norm{\mathbf u^3}_2^{4/3} + \norm{\mathbf v^3}_2^{4/3}\right)   
        \leq   \frac{8 C_2^2}{\alpha} \norm{\mathbf u-\mathbf v}_X^2 \left(\norm{\mathbf u}_X^4+\norm{\mathbf v}_X^4\right).
    \end{align*}
\end{proof}

\begin{lemma}\label{lem:lipchitz_nablaX}
    For any $\mathbf u,\mathbf v\in\bR^N$ with $\|\mathbf u\|_X\leq 2 \|\mathbf u^*\|_X$ and $\|\mathbf v\|_X\leq 2 \|\mathbf u^*\|_X$, it holds for some constant $L_2>0$ depending on $\|\mathbf u^*\|_X$ and independent of the mesh size $h$ that $ \norm{\nabla_X \mathbf E_h(\mathbf u) - \nabla_X \mathbf E_h(\mathbf v)}_X \leq L_2 \norm{\mathbf u - \mathbf v}_X.$
\end{lemma}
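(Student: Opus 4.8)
The plan is to start from the explicit formula \eqref{eq:gradX} for the gradient, $\nabla_X \mathbf E_h(\mathbf u) = (-\Delta_h + \alpha\mathbb I)^{-1}\big((-\Delta_h + \mathbb V)\mathbf u + \beta\mathbf u^3\big)$, since $\nabla_X \mathbf E_h = \mathbb G_X A_{\mathbf u}\mathbf u$ with $\mathbb G_X = (-\Delta_h+\alpha\mathbb I)^{-1}$ and $A_{\mathbf u}\mathbf u = (-\Delta_h+\mathbb V)\mathbf u + \beta\mathbf u^3$. The difference then splits cleanly as
\[
\nabla_X \mathbf E_h(\mathbf u) - \nabla_X \mathbf E_h(\mathbf v) = (-\Delta_h + \alpha\mathbb I)^{-1}(-\Delta_h + \mathbb V)(\mathbf u - \mathbf v) + \beta(-\Delta_h + \alpha\mathbb I)^{-1}(\mathbf u^3 - \mathbf v^3),
\]
so by the triangle inequality in $\|\cdot\|_X$ it suffices to bound a linear term and a cubic term separately.

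For the cubic term I would invoke Lemma~\ref{lem:lipchitz-u3} verbatim, which gives $\norm{(-\Delta_h + \alpha\mathbb I)^{-1}(\mathbf u^3 - \mathbf v^3)}_X \leq L_1\norm{\mathbf u - \mathbf v}_X(\norm{\mathbf u}_X^4 + \norm{\mathbf v}_X^4)$; the hypotheses $\norm{\mathbf u}_X,\norm{\mathbf v}_X \leq 2\norm{\mathbf u^*}_X$ collapse the factor $\norm{\mathbf u}_X^4 + \norm{\mathbf v}_X^4$ into the constant $32\norm{\mathbf u^*}_X^4$. This is exactly the place where the a priori norm bounds are consumed, and why $L_2$ is forced to depend on $\norm{\mathbf u^*}_X$.

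For the linear term I would follow the device used in the proof of Lemma~\ref{lem:esti_gradEu}, writing $-\Delta_h + \mathbb V = (-\Delta_h + \alpha\mathbb I) + \mathbb V_\alpha$ with $\mathbb V_\alpha = \mathbb V - \alpha\mathbb I$, so that
\[
(-\Delta_h + \alpha\mathbb I)^{-1}(-\Delta_h + \mathbb V)(\mathbf u - \mathbf v) = (\mathbf u - \mathbf v) + (-\Delta_h + \alpha\mathbb I)^{-1}\mathbb V_\alpha(\mathbf u - \mathbf v).
\]
The first piece contributes exactly $\norm{\mathbf u - \mathbf v}_X$, while for the second I would apply Lemma~\ref{lemma-norm}(ii) and then Lemma~\ref{lemma-norm}(i), together with $\norm{\mathbb V_\alpha \mathbf w}_2 \leq V_{\alpha,\max}\norm{\mathbf w}_2$ (valid since $\mathbb V_\alpha$ and $\mathbb M$ are both diagonal), to obtain $\norm{(-\Delta_h + \alpha\mathbb I)^{-1}\mathbb V_\alpha(\mathbf u - \mathbf v)}_X \leq \frac{1}{\sqrt\alpha}\norm{\mathbb V_\alpha(\mathbf u-\mathbf v)}_2 \leq \frac{V_{\alpha,\max}}{\alpha}\norm{\mathbf u - \mathbf v}_X$. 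Collecting the three contributions yields the claim with $L_2 = 1 + \frac{V_{\alpha,\max}}{\alpha} + 32\beta L_1\norm{\mathbf u^*}_X^4$.

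Since every estimate required is already packaged in Lemmas~\ref{lemma-norm} and~\ref{lem:lipchitz-u3}, there is no genuine obstacle here; the proof is essentially routine bookkeeping. The only points demanding care are tracking the a priori norm constraints so that the cubic estimate produces a finite constant, and observing that the mesh-independence of $L_1$, of $V_{\alpha,\max}$, and of the constants in Lemma~\ref{lemma-norm} propagates directly to $L_2$, so that $L_2$ is independent of $h$ as asserted.
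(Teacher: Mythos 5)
Your proposal is correct and follows essentially the same route as the paper's proof: the same decomposition of $\nabla_X \mathbf E_h(\mathbf u)-\nabla_X \mathbf E_h(\mathbf v)$ into the identity part, the $(-\Delta_h+\alpha\mathbb I)^{-1}\mathbb V_\alpha$ part, and the cubic part, with the same invocations of Lemma~\ref{lemma-norm} and Lemma~\ref{lem:lipchitz-u3}, arriving at the identical constant $L_2 = 1 + \frac{V_{\alpha,\max}}{\alpha} + 32\beta L_1\norm{\mathbf u^*}_X^4$.
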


\begin{proof}
Recall that $\nabla_X \mathbf E_h(\mathbf u) = \mathbf u + (-\Delta_h+\alpha \mathbb  I)^{-1} (\mathbb V-\alpha\mathbb I + \beta \text{diag}(\mathbf u)^2 )\mathbf u$. Using Lemma~\ref{lem:lipchitz-u3} and Lemma~\ref{lemma-norm}, we can conclude that 
\begin{align*}
    & \norm{\nabla_X \mathbf E_h(\mathbf u) - \nabla_X \mathbf E_h(\mathbf v)}_X \\
    &\quad\leq \norm{\mathbf u-\mathbf v}_X + \norm{(-\Delta_h + \alpha\mathbb I)^{-1} (\mathbb V-\alpha \mathbb I)(\mathbf u - \mathbf v)}_X + \beta \norm{(-\Delta_h + \alpha\mathbb I)^{-1}(\mathbf u^3 - \mathbf v^3)}_X \\
    &\quad\leq \norm{\mathbf u-\mathbf v}_X + \frac{V_{\alpha,\max}}{\sqrt{\alpha}} \norm{\mathbf u-\mathbf v}_2 + \beta L_1 \norm{\mathbf u-\mathbf v}_X \left(\norm{\mathbf u}_X^4+\norm{\mathbf v}_X^4\right) \\
    &\quad\leq\left(1 + \frac{V_{\alpha,\max}}{\alpha} + 32 \beta L_1 \|\mathbf u^*\|_X^4\right)\norm{\mathbf u-\mathbf v}_X.
\end{align*}
\end{proof}

\begin{proof}[Proof of Lemma~\ref{lem:lipchitz-grad}]
Recall $\nabla_X^\calR \mathbf E_h(\mathbf u) = \nabla_X \mathbf E_h(\mathbf u)-\frac{\langle \mathbf u,\nabla_X \mathbf E_h(\mathbf u)\rangle_h}{\langle\mathbb G_X\mathbf u, \mathbf u\rangle_h} \mathbb G_X\mathbf u$. Set
\begin{equation*}
    \gamma = \frac{\langle \mathbf u,\nabla_X \mathbf E_h(\mathbf u)\rangle_h}{\langle\mathbb G_X\mathbf u, \mathbf u\rangle_h}\quad\text{and}\quad \gamma^* = \frac{\langle \mathbf u^*,\nabla_X \mathbf E_h(\mathbf u^*)\rangle_h}{\langle\mathbb G_X\mathbf u^*, \mathbf u^*\rangle_h}
    = \frac{\langle \mathbf u^*, \mathbb G_X A_{\mathbf u^*}\mathbf u^*\rangle_h}{\langle\mathbb G_X\mathbf u^*, \mathbf u^*\rangle_h}=\lambda^0_h.
\end{equation*}
Suppose that $\norm{\mathbf u - \mathbf u^*}_X\leq \norm{\mathbf u^*}_X$, then $\norm{\mathbf u}_X\leq 2 \norm{\mathbf u^*}_X$. With Lemma~\ref{lemma-norm} and Lemma~\ref{lem:lipchitz_nablaX}, it holds that
\begin{align*}
    \norm{\nabla_X^\calR \mathbf E_h(\mathbf u)}_X = & \norm{\nabla_X^\calR \mathbf E_h(\mathbf u) - \nabla_X^\calR \mathbf E_h(\mathbf u^*)}_X\\
    = & \norm{\nabla_X \mathbf E_h(\mathbf u) - \gamma \mathbb G_X\mathbf u - \nabla_X \mathbf E_h(\mathbf u^*) + \gamma^* \mathbb G_X\mathbf u^*}_X\\
    \leq & \norm{\nabla_X \mathbf E_h(\mathbf u) - \nabla_X \mathbf E_h(\mathbf u^*)}_X + |\gamma - \gamma^*|\cdot\norm{\mathbb G_X\mathbf u}_X + |\gamma^*| \cdot\norm{\mathbb G_X\mathbf u - \mathbb G_X\mathbf u^*}_X\\
    \leq & L_2 \norm{\mathbf u-\mathbf u^*}_X + \frac{\|\mathbf u^*\|_X}{\alpha} |\gamma - \gamma^*| + \frac{|\gamma^*|}{\alpha} \norm{\mathbf u-\mathbf u^*}_X.
\end{align*}
Thus it suffices to estimate $|\gamma - \gamma^*|$. Set
\begin{equation*}
    A = \langle \mathbf u,\nabla_X \mathbf E_h(\mathbf u)\rangle_h,~~  A^* = \langle \mathbf u^*,\nabla_X \mathbf E_h(\mathbf u^*)\rangle_h,~~ B = \langle\mathbb G_X\mathbf u, \mathbf u\rangle_h,~~ B^* = \langle\mathbb G_X\mathbf u^*, \mathbf u^*\rangle_h.
\end{equation*}
By Lemma~\ref{lemma-norm}, Lemma~\ref{lem:lipchitz_nablaX}, and $\|\mathbf u\|_2=\|\mathbf u^*\|_2=1$, we have
\begin{align*}
    |A - A^*| \leq & \norm{\mathbf u-\mathbf u^*}_2 \norm{\nabla_X \mathbf E_h(\mathbf u^*)}_2 + \norm{\mathbf u}_2 \norm{\nabla_X \mathbf E_h(\mathbf u)- \nabla_X \mathbf E_h(\mathbf u^*)}_2 \\
    \leq &\frac{\norm{\nabla_X \mathbf E_h(\mathbf u^*)}_X}{\alpha} \norm{\mathbf u-\mathbf u^*}_X + \frac{L_2}{\sqrt{\alpha}}\norm{\mathbf u}_X \norm{\mathbf u-\mathbf u^*}_X,\\
     |B-B^*| \leq & \norm{\mathbf u-\mathbf u^*}_2 \norm{\mathbb G_X \mathbf u}_2 + \norm{\mathbf u^*}_2 \norm{\mathbb G_X \mathbf u - \mathbb G_X \mathbf u^*}_2 \leq \frac{2}{\alpha^{3/2}} \norm{\mathbf u-\mathbf u^*}_X,\\
    |\gamma - \gamma^*| = & \left|\frac{A}{B} - \frac{A^*}{B^*}\right| \leq \frac{|A-A^*|\cdot|B^*| + |A^*|\cdot |B-B^*|}{|B B^*|} \leq L_\gamma \norm{\mathbf u-\mathbf u^*}_X,
\end{align*}
for small $\norm{\mathbf u-\mathbf u^*}_X$ and some constant $L_\gamma>0$, which completes the proof.
\end{proof}

\section{Positive eigengap independent of the mesh size}
\label{apx:eigengap}

In this section, we prove that Assumption~\ref{asp:ustar} holds with a positive eigengap independent of the mesh size for the $P^1$ finite element method with quadrature on an unstructured simplicial mesh with shape regularity.

\begin{definition}[Shape regular meshes]
    A simplicial mesh with $h_K$ denoting the size of each simplex $K$ is shape regular if there exists some constant $\sigma>0$ independent of the mesh size $h = \sup_K h_K$, such that $\frac{h_K}{\rho_K}\leq \sigma$ holds for all $K$, where $\rho_K$ is the diameter of the largest sphere contained in $K$.
\end{definition}

\subsection{Technical lemmas}

\begin{lemma}\label{lem:error_interpolation}
    Suppose that $V\in C^1(\Bar{\Omega})$. There exist constants $E_1,E_2,E_3>0$ independent of the mesh size, so that for any $h>0$ and $\mathbf u\in\bR^N$, the followings hold for $P^1$ finite element method with quadrature on an unstructured simplicial mesh with shape regularity:
    \begin{itemize}
        \item[(i)] $0\leq \|\mathbf u\|_2^2 - \|\Pi\mathbf u\|_{L^2(\Omega)}^2 \leq E_1 h \langle -\Delta_h \mathbf u,\mathbf u\rangle_h$.
        \item[(ii)] $\left|\langle \mathbf u,\mathbb V\mathbf u\rangle_h - \int_\Omega V |\Pi\mathbf u|^2 \right| \leq E_2 h \langle -\Delta_h\mathbf u,\mathbf u\rangle_h +E_3 h \|\mathbf u\|_2^2$.
        \item[(iii)] $\langle-\Delta_h \mathbf u,\mathbf u\rangle_h = |\Pi\mathbf u|^2_{H_0^1(\Omega)}$.
        \item[(iv)] $\langle \mathbf u^2, \mathbf u^2\rangle_h \geq \|\Pi\mathbf u\|_{L^4(\Omega)}^4$. 
    \end{itemize}
\end{lemma}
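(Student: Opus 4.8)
The plan is to reduce every estimate to a computation on a single simplex $T$, exploiting that for the $P^1$ scheme the quadrature is the vertex (nodal) rule $\frac{|T|}{d+1}\sum_{j=0}^{d} f(\bx_{i_j})$, whose assembled weights are $w_i=\sum_{T\ni\bx_i}\frac{|T|}{d+1}$, and that $\Pi\mathbf u$ is affine on $T$ with $\Pi\mathbf u=\sum_{j=0}^d u_{i_j}\lambda_j$ in barycentric coordinates $\lambda_j$. Claim (iii) is immediate: by the definition of $\mathbb S$ and the discrete integration by parts \eqref{integrationbyparts}, $\langle -\Delta_h\mathbf u,\mathbf u\rangle_h=\mathbf u^\top\mathbb S\mathbf u=\langle\nabla(\Pi\mathbf u),\nabla(\Pi\mathbf u)\rangle$, and since $\nabla(\Pi\mathbf u)$ is piecewise constant the quadrature is exact, giving $|\Pi\mathbf u|_{H_0^1(\Omega)}^2$.

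For claim (iv) and the lower bound in (i), I would apply Jensen's inequality to the convex maps $t\mapsto t^2$ and $t\mapsto t^4$ against the probability weights $\lambda_j(\bx)$ (nonnegative and summing to $1$): $(\Pi\mathbf u)(\bx)^p=\big(\sum_j u_{i_j}\lambda_j(\bx)\big)^p\le \sum_j \lambda_j(\bx)\,u_{i_j}^p$ for $p=2,4$. Integrating over $T$ and using $\int_T\lambda_j=\frac{|T|}{d+1}$ yields $\int_T(\Pi\mathbf u)^p\le\frac{|T|}{d+1}\sum_j u_{i_j}^p$; summing over $T$ gives $\|\Pi\mathbf u\|_{L^2(\Omega)}^2\le\|\mathbf u\|_2^2$ and $\|\Pi\mathbf u\|_{L^4(\Omega)}^4\le\langle\mathbf u^2,\mathbf u^2\rangle_h$, which is (iv) and the sign in (i).

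The upper bound in (i) is the quantitative step. On each $T$ I would compute the nodal quadrature error for the quadratic $(\Pi\mathbf u)^2$ exactly, using $\int_T\lambda_a\lambda_b=\frac{|T|}{(d+1)(d+2)}(1+\delta_{ab})$, which gives $\frac{|T|}{d+1}\sum_j u_{i_j}^2-\int_T(\Pi\mathbf u)^2=\frac{|T|}{(d+1)(d+2)}\big[(d+1)\sum_j u_{i_j}^2-(\sum_j u_{i_j})^2\big]=\frac{|T|}{(d+1)(d+2)}\sum_{a<b}(u_{i_a}-u_{i_b})^2$. Since $\Pi\mathbf u$ is affine, $u_{i_a}-u_{i_b}=\nabla(\Pi\mathbf u)\cdot(\bx_{i_a}-\bx_{i_b})$, so $(u_{i_a}-u_{i_b})^2\le h_T^2|\nabla(\Pi\mathbf u)|^2$ with $h_T\le h$; hence the local error is at most $C_d\,h_T^2\int_T|\nabla(\Pi\mathbf u)|^2$, and summing and invoking (iii) gives the stated bound $E_1 h\langle-\Delta_h\mathbf u,\mathbf u\rangle_h$.

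For (ii), since $V\in C^1(\bar\Omega)$ is Lipschitz with constant $L_V$, on each $T$ I would freeze $V$ at $\bar V_T:=V(\bx_{i_0})$ and split $\frac{|T|}{d+1}\sum_j V(\bx_{i_j})u_{i_j}^2-\int_T V(\Pi\mathbf u)^2$ into $\bar V_T$ times the nodal quadrature error for $(\Pi\mathbf u)^2$ (controlled by $V_{\max}C_d h_T^2\int_T|\nabla(\Pi\mathbf u)|^2$ as in (i)) plus two remainder terms carrying the factor $V-\bar V_T$; each remainder is at most $L_V h_T$ times the vertex sum $\frac{|T|}{d+1}\sum_j u_{i_j}^2$ (using the Jensen lower bound to dominate $\int_T(\Pi\mathbf u)^2$ by that sum). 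Summing over $T$, using $h_T\le h$ and $\sum_T\frac{|T|}{d+1}\sum_j u_{i_j}^2=\|\mathbf u\|_2^2$, produces the two terms $E_2 h\langle-\Delta_h\mathbf u,\mathbf u\rangle_h$ and $E_3 h\|\mathbf u\|_2^2$. The only genuine obstacle is the upper bound in (i)--(ii): one must extract a factor of $h$ from the nodal quadrature error and convert the edge differences into the Dirichlet energy; shape regularity together with $h_T\le h$ is what guarantees that $C_d,E_1,E_2,E_3$ are mesh-independent, while the rest is elementary convexity and exact integration of affine functions.
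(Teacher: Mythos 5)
Your proof is correct and follows the same overall architecture as the paper's: everything is localized to a single simplex, claim (iii) is the exactness of the vertex rule for the piecewise-constant gradient combined with \eqref{integrationbyparts}, claim (iv) and the sign in (i) come from Jensen's inequality against the barycentric weights with $\int_T\lambda_j=\frac{|T|}{d+1}$ (the paper reaches the same identity slightly more circuitously by integrating over a reference barycentric simplex), and (ii) is handled by freezing $V$ on each simplex and splitting into a frozen quadrature-error term plus Lipschitz remainders, exactly as in the paper. The one genuine difference is the quantitative step, the upper bound in (i): the paper cancels the affine part by exactness and then bounds the residual crudely by $|K|\sup_{\bx,\bx'\in K}\bigl((\mathbf b^\top\bx)^2-(\mathbf b^\top\bx')^2\bigr)\leq 2\sup_{\bx\in\Omega}\|\bx\|\cdot h\,|K|\,\|\mathbf b\|^2$, which yields only $O(h)$ with a constant depending on the position of $\Omega$ relative to the origin; you instead evaluate the vertex-rule error for the quadratic exactly via $\int_T\lambda_a\lambda_b=\frac{|T|}{(d+1)(d+2)}(1+\delta_{ab})$ and the identity $(d+1)\sum_a u_{i_a}^2-(\sum_a u_{i_a})^2=\sum_{a<b}(u_{i_a}-u_{i_b})^2$, then convert edge differences to $h_T^2|\nabla\Pi\mathbf u|^2$. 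This gives the translation-invariant and sharper bound $C_d\,h^2\langle-\Delta_h\mathbf u,\mathbf u\rangle_h$, which implies the stated $E_1h\langle-\Delta_h\mathbf u,\mathbf u\rangle_h$ since $h\leq\mathrm{diam}(\Omega)$, and in fact recovers in one stroke the $O(h^2)$ information that the paper extracts separately in Lemma~\ref{lem:L2error2}. The same refinement propagates into your $E_2$ term in (ii). No gaps; shape regularity is not actually needed for this lemma in either argument, only $h_T\leq h$.
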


\begin{proof}
    It suffices to prove the results on each simplex $K$ of the mesh since all of them are additive. Consider a single simplex $K$. For convenience, by abusing notation, we let all vertices of $K$ be indexed as $\{\bx_i, i=1,\cdots, d+1\}$ and the $P^1$ interpolation polynomial $(\Pi\mathbf u)(\mathbf x) = a+\mathbf b^\top \mathbf x$ on $K$ satisfying $(\Pi\mathbf u)(\mathbf x_i)=u_i$. 
   Define the barycentric coordinates as $\sum\limits_{i=1}^{d+1}t_i=1, t_i\geq 0$ for the simplex $K$, i.e., $\bx =\sum\limits_{i=1}^{d+1} t_i\bx_i,\forall \bx\in K$.  It is straightforward to verify that the polynomial $[\Pi\mathbf u(\bx)]^2$ is a convex function of $\bx$, thus  we have Jensen's inequality
    \[ (\Pi\mathbf u)^2\left(\sum\limits_{i=1}^{d+1}  t_i \mathbf x_i\right)  \leq \sum\limits_{i=1}^{d+1}  t_i (\Pi\mathbf u)^2(\mathbf x_i).\]

 Conider a barycentric  simplex $\Delta_I = \{ (t_1,\cdots, t_{d+1})\in[0,1]^{d+1}: \sum_{i\in I} t_i=1\}\subset \mathbb R^{d+1}$. By regarding $\Delta_I$ as an embedded manifold in $\mathbb R^{d+1}$, its volume $|\Delta_I|$ can be written as an integral of constant $1$: $$|\Delta_I|=\int_{\Delta_I} 1 \mathrm{d}V=\int_{\Delta_I} \left(\sum\limits_{i=1}^{d+1}t_i \right) \mathrm{d}V=(d+1)\int_{\Delta_I} t_j  \mathrm{d}V,\quad \mbox{for any fixed } j,$$  
where $\mathrm{d}V$ is the volume form of the manifold and in the last step we have used the symmetry of integrating each $t_j$ on the simplex  $\Delta_I$. Since $\frac{1}{d+1}=\frac{1}{|\Delta_I|}\int_{\Delta_I} \left(t_j \right)  \mathrm{d}V, \quad \forall j$, we have
\begin{equation}\label{eq:Jenson_simplex_K}
        \begin{split}
            \frac{|K|}{d+1}\sum\limits_{i=1}^{d+1} u_i^2 & = \frac{|K|}{d+1}\sum\limits_{i=1}^{d+1} (\Pi\mathbf u)^2(\mathbf x_i) = \frac{|K|}{|\Delta_I|} \int_{\Delta_I}\sum\limits_{i=1}^{d+1} t_i (\Pi\mathbf u)^2(\mathbf x_i) \mathrm{d}V\\
            &  \geq \frac{|K|}{|\Delta_I|} \int_{\Delta_I} (\Pi\mathbf u)^2\left(\sum\limits_{i=1}^{d+1} t_i \mathbf x_i\right) \mathrm{d}V = \int_K (\Pi\mathbf u)^2(\mathbf x) \mathrm{d}\mathbf x.
        \end{split}
    \end{equation}
    Summing the inequality above over all simplexes in the mesh, 
the first inequality in (i) is proven. Note that the quadrature $\frac{|K|}{d+1} \sum_{i=1}^{d+1} f(\bx_i)=\frac{|K|}{d+1} \sum_{\mathbf x_i\in K} f(\bx_i)$  is equal to $\int_K f(\bx) \mathrm{d}\bx$ for any linear function $f(\bx)$. 
Let    $E_1 = 2 \sup_{\mathbf x\in \Omega}\|\mathbf x\|$, then
we have 
\begin{equation}\label{appendixc-estimate1}
    \begin{split}
        & \frac{|K|}{d+1} \sum_{\mathbf x_i\in K} (a+\mathbf b^\top\mathbf x_i)^2 - \int_K (a+\mathbf b^\top\mathbf x)^2 \mathrm{d}\mathbf x  = \frac{|K|}{d+1} \sum_{\mathbf x_i\in K}  (\mathbf b^\top\mathbf x_i)^2 - \int_K (\mathbf b^\top\mathbf x)^2 \mathrm{d}\mathbf x \\
        \leq & |K| \sup_{\mathbf x,\mathbf x'\in K} \left((\mathbf b^\top\mathbf x)^2- (\mathbf b^\top\mathbf x')^2\right)=|K| \sup_{\mathbf x,\mathbf x'\in K} \left(\mathbf b^\top\mathbf x -  \mathbf b^\top\mathbf x' \right)\left( \mathbf b^\top\mathbf x+\mathbf b^\top\mathbf x'\right)   \leq E_1 h |K| \|\mathbf b\|^2.
    \end{split}
\end{equation}  
    With notation in \eqref{discreteL2norm}, the discrete integration by parts \eqref{integrationbyparts} can be also be written as 
    \begin{equation}
        \label{appendixc-esitmate3}
        \langle -\Delta_h \mathbf u,\mathbf u\rangle_h=\langle \nabla u_h, \nabla u_h  \rangle=\langle \nabla \Pi\mathbf u (\bx), \nabla \Pi\mathbf u (\bx)  \rangle.
    \end{equation}
    Notice that $\nabla\Pi\mathbf u(\mathbf x) = \mathbf b$ is a constant over $K$, thus after summing over $K$, the term $|K|\|\mathbf b\|^2$ becomes $\langle \nabla \Pi\mathbf u (\bx), \nabla \Pi\mathbf u (\bx)  \rangle=\langle -\Delta_h \mathbf u,\mathbf u\rangle_h.$
By the same notation, we also have 
\begin{equation}
    \label{P1FEM-norms}
    \sum_K \frac{|K|}{d+1} \sum_{\mathbf x_i\in K} (a+\mathbf b^\top\mathbf x_i)^2=
    \langle  \Pi\mathbf u (\bx),   \Pi\mathbf u (\bx)  \rangle=\langle   \mathbf u,\mathbf u\rangle_h, \quad\sum_K  \int_K (a+\mathbf b^\top\mathbf x)^2 \mathrm{d}\mathbf x=\|\Pi\mathbf u\|_{L^2(\Omega)}^2,
\end{equation}
which proves the second inequality of (i).

Notice that \eqref{eq:Jenson_simplex_K} can also be written 
as \begin{equation}
    \label{appendixc-estimate2}
     \int_K (a+\mathbf b^\top\mathbf x)^2 \mathrm{d}\mathbf x \leq \frac{|K|}{d+1} \sum_{\mathbf x_i\in K} (a+\mathbf b^\top\mathbf x_i)^2.
\end{equation}
    
    For proving  (ii), with some fixed $\tilde{\mathbf x}\in K$, we have
    \begin{align*}
        & \left|\frac{|K|}{d+1} \sum_{\mathbf x_i\in K} V(\mathbf x_i)(a+\mathbf b^\top\mathbf x_i)^2 - \int_K V(\mathbf x)(a+\mathbf b^\top\mathbf x)^2 \mathrm{d}\mathbf x\right| \\
        \leq & |V(\tilde{\mathbf x})| \cdot \left|\frac{|K|}{d+1} \sum_{\mathbf x_i\in K} (a+\mathbf b^\top\mathbf x_i)^2 - \int_K (a+\mathbf b^\top\mathbf x)^2 \mathrm{d}\mathbf x\right| \\
        & \quad\qquad + \frac{|K|}{d+1} \sum_{\mathbf x_i\in K} |V(\mathbf x_i)-V(\tilde{\mathbf x})|(a+\mathbf b^\top\mathbf x_i)^2 + \int_K |V(\mathbf x) - V(\tilde{\mathbf x})|(a+\mathbf b^\top\mathbf x)^2 \mathrm{d}\mathbf x\\
        \leq & \|V\|_{L^\infty(\Bar{\Omega})} \left|\frac{|K|}{d+1} \sum_{\mathbf x_i\in K} (a+\mathbf b^\top\mathbf x_i)^2 - \int_K (a+\mathbf b^\top\mathbf x)^2 \mathrm{d}\mathbf x\right| \\
        &\quad\qquad + h_K \|\nabla V\|_{L^\infty(\Bar{\Omega})} \left( \frac{|K|}{d+1} \sum_{\mathbf x_i\in K} (a+\mathbf b^\top\mathbf x_i)^2 + \int_K (a+\mathbf b^\top\mathbf x)^2 \mathrm{d}\mathbf x \right)\\ 
        \leq &  \|V\|_{L^\infty(\Bar{\Omega})}  E_1 h |K| \|\mathbf b\|^2 + 2  h_K \|\nabla V\|_{L^\infty(\Bar{\Omega})} \frac{|K|}{d+1} \sum_{\mathbf x_i\in K} (a+\mathbf b^\top\mathbf x_i)^2,
    \end{align*} 
    where we have used  \eqref{appendixc-estimate1} and \eqref{appendixc-estimate2} in the last step. By summing over $K$, we have proven (ii). Since $\nabla \Pi\mathbf u$ is a constant over $K$, the quadrature for $(\nabla \Pi\mathbf u)^2$ is exact, i.e., $\langle \nabla \Pi\mathbf u (\bx), \nabla \Pi\mathbf u (\bx)  \rangle=||\nabla \Pi\mathbf u ||^2_{L^2(\Omega)}$, thus by \eqref{appendixc-esitmate3} the result (iii) is true.
    Finally, with a similar argument as in \eqref{eq:Jenson_simplex_K}, the result (iv) is also a consequence of Jensen's inequality since $[\Pi\mathbf u(\bx)]^4$ is a convex function on~$K$.
\end{proof}

\begin{lemma}\label{lem:h1PU}
    Consider $P^1$ finite element method with quadrature on an unstructured simplicial mesh with shape regularity in dimension $d=1,2,3$. For any $h>0$ and $u\in H^2(\Omega) \cap H_0^1(\Omega)$, it holds for some constant $E_4>0$ independent of the mesh size that $\left| |u|^2_{H_0^1(\Omega)} - \langle -\Delta_h\mathbf u,\mathbf u\rangle_h \right| \leq E_4 h \|u\|_{H^2(\Omega)}^2$, where $\mathbf u\in\bR^N$ is the vector of  the values of $u$ at quadrature points. 
\end{lemma}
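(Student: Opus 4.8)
The plan is to reduce the claim to a classical $P^1$ nodal interpolation error estimate via Lemma~\ref{lem:error_interpolation}(iii). First I would observe that, since $\mathbf u$ records the values of $u$ at the quadrature nodes and these nodes are exactly the mesh vertices for the $P^1$ method, the interpolant $\Pi\mathbf u\in V_0^h$ is the standard continuous piecewise-linear nodal interpolant of $u$; point evaluation is legitimate because $d\le 3$ yields the Sobolev embedding $H^2(\Omega)\hookrightarrow C(\bar\Omega)$. Lemma~\ref{lem:error_interpolation}(iii) then identifies $\langle -\Delta_h\mathbf u,\mathbf u\rangle_h = |\Pi\mathbf u|^2_{H_0^1(\Omega)}$, so the quantity to control is $\big| |u|^2_{H_0^1(\Omega)} - |\Pi\mathbf u|^2_{H_0^1(\Omega)}\big|$, where both seminorms coincide with $\|\nabla\,\cdot\,\|_{L^2(\Omega)}$ since $u,\Pi\mathbf u\in H_0^1(\Omega)$.

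Next I would invoke the classical interpolation estimate on shape-regular simplicial meshes: there is a constant $C$, depending only on $d$ and the shape-regularity parameter $\sigma$, such that $|u - \Pi\mathbf u|_{H^1(\Omega)}\le C h\,|u|_{H^2(\Omega)} \le C h\,\|u\|_{H^2(\Omega)}$. This is where shape regularity is essential: the estimate follows from the Bramble--Hilbert lemma on a reference simplex combined with an affine scaling argument, the constant being uniform across elements precisely because of the bound $h_K/\rho_K\le\sigma$ (no quasi-uniformity is needed).

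Then I would factor the difference of squares as
\[
\big| |u|^2_{H_0^1(\Omega)} - |\Pi\mathbf u|^2_{H_0^1(\Omega)}\big| = \big| |u|_{H_0^1(\Omega)} - |\Pi\mathbf u|_{H_0^1(\Omega)}\big|\cdot\big(|u|_{H_0^1(\Omega)} + |\Pi\mathbf u|_{H_0^1(\Omega)}\big).
\]
The reverse triangle inequality for the $H^1$ seminorm bounds the first factor by $|u-\Pi\mathbf u|_{H^1(\Omega)}\le Ch\|u\|_{H^2(\Omega)}$. For the second factor, the ordinary triangle inequality together with the same interpolation estimate gives $|u|_{H_0^1(\Omega)}+|\Pi\mathbf u|_{H_0^1(\Omega)}\le 2|u|_{H_0^1(\Omega)}+|u-\Pi\mathbf u|_{H^1(\Omega)}\le (2+Ch)\|u\|_{H^2(\Omega)}$, which is at most a constant multiple of $\|u\|_{H^2(\Omega)}$ since $h\le\operatorname{diam}(\Omega)$. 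Multiplying the two bounds yields the claim with $E_4$ depending only on $C$, $\sigma$, $d$, and $\Omega$.

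The main obstacle is simply marshalling the uniform interpolation estimate correctly: one must ensure the constant $C$ is genuinely independent of $h$ (which relies on shape regularity rather than quasi-uniformity) and that $u$ is regular enough for nodal interpolation to be well defined, both of which are handled by the shape-regularity hypothesis and the $d\le 3$ embedding. Once that estimate is in hand, the difference-of-squares factorization and the norm bounds are entirely routine.
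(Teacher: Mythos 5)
Your proposal is correct and follows essentially the same route as the paper: both reduce via Lemma~\ref{lem:error_interpolation}(iii) to comparing $|u|^2_{H_0^1(\Omega)}$ with $|\Pi\mathbf u|^2_{H_0^1(\Omega)}$, factor the difference of squares, and invoke the standard $O(h)\|u\|_{H^2(\Omega)}$ nodal interpolation estimate on shape-regular meshes. The only cosmetic difference is that you factor at the level of the seminorms with the reverse triangle inequality, whereas the paper bounds $\bigl||u|^2-|\Pi\mathbf u|^2\bigr|$ by $|u-\Pi\mathbf u|_{H_0^1(\Omega)}\,|u+\Pi\mathbf u|_{H_0^1(\Omega)}$ directly.
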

\begin{remark}
 Lemma \ref{lem:error_interpolation} holds for any dimension $d$, but Lemma \eqref{lem:h1PU} holds only for dimension $d\leq 3$ since we need the point values $u(\bx_i)$ being well defined, ensured by the Sobolev embedding $u\in H^2(\Omega) \cap H_0^1(\Omega)\subset C(\Omega)$, which holds only in dimension $d=1,2,3$.
\end{remark}

\begin{proof}
  By Lemma~\ref{lem:error_interpolation} (iii), we have
    \begin{align*}
        & \left||u|^2_{H_0^1(\Omega)} - \langle -\Delta_h\mathbf u,\mathbf u\rangle_h \right| = \left||u|^2_{H_0^1(\Omega)} - |\Pi\mathbf u|_{H_0^1(\Omega)}^2\right| \\
        \leq & |u-\Pi\mathbf u|_{H_0^1(\Omega)}|u+\Pi\mathbf u|_{H_0^1(\Omega)} \leq |u-\Pi\mathbf u|_{H_0^1(\Omega)}\left(2|u|_{H_0^1(\Omega)} + |u-\Pi\mathbf u|_{H_0^1(\Omega)}\right).
    \end{align*}
    We can conclude the proof after combining the estimates above and the standard interpolation estimate $|u-\Pi\mathbf u|_{H_0^1(\Omega)}=O(h) \cdot \| u\|_{H^2(\Omega)}$, see \cite{ciarlet2002finite}).
\end{proof}

\begin{lemma}\label{lem:L2error2}
    Under the same assumptions as in Lemma~\ref{lem:error_interpolation}, let $\Bar{\Pi}\mathbf u$ be the piecewise constant interpolation of $\mathbf u$, i.e., $(\Bar{\Pi}\mathbf u)(\mathbf x) = \frac{1}{|K|}\int_K (\Pi\mathbf u)(\mathbf x') \mathrm{d}\mathbf x'$ on any simplex $K$. There is some constant $E_5>0$ independent of the mesh size s.t. $\|\Pi\mathbf u - \Bar{\Pi}\mathbf u\|_{L^2(\Omega)}^2 \leq  E_5 h^2 \langle -\Delta_h \mathbf u, \mathbf u\rangle_h$. 
\end{lemma}

\begin{proof}
    Consider a simplex $K$ on which we assume $(\Pi\mathbf u)(\mathbf x) = a+\mathbf b^\top \mathbf x$ and $(\Bar{\Pi}\mathbf u)(\mathbf x) = a+ \frac{\mathbf b^\top}{3}\sum_{\mathbf x_i\in K} \mathbf x_i$. Then we have
    \begin{align*}
        & \int_K ((\Pi\mathbf u)(\mathbf x) - (\Bar{\Pi}\mathbf u)(\mathbf x))^2 = \int_K \left(\mathbf b^\top\mathbf x - \frac{\mathbf b^\top}{3}\sum_{\mathbf x_i\in K}\mathbf x_i\right)^2 \mathrm{d}\mathbf x \leq h_K^2 |K| \|\mathbf b\|^2 \leq h^2 |K| \|\mathbf b\|^2.
    \end{align*}  
By the same arguments as in the proof for  Lemma \ref{lem:error_interpolation},  the result holds with $E_5=1$.
\end{proof}

\subsection{Proof of consistency and eigengap}

\begin{theorem}\label{thm:consist}
    Consider $P^1$ finite element method with quadrature on an unstructured simplicial mesh with shape regularity in dimension $d=1,2,3$. Let $u^*$ and $\lambda^*$ be the ground state and the corresponding eigenvalue of the continuous energy $E$. Let $u_h^*$ and $\lambda_h^*$ be the ground state and the corresponding eigenvalue of the discrete energy $E_h$. Suppose that $u^*\in H^2(\Omega)\cap H_0^1(\Omega)$ and that $V\in C^1(\Bar{\Omega})$. Then $u_h^*\to u^*$ in $H_0^1(\Omega)$ and $\lambda_h^*\to\lambda^*$ as $h\to 0$.
\end{theorem}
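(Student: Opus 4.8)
The plan is to establish energy convergence $E_h(\mathbf u_h^*)\to E(u^*)$ together with strong $H_0^1$ convergence of the minimizers, in the spirit of $\Gamma$-convergence, and then read off the eigenvalue convergence from the identity $\lambda_h^*=2E_h(\mathbf u_h^*)+\frac{\beta}{2}\langle(\mathbf u_h^*)^2,(\mathbf u_h^*)^2\rangle_h$, which follows from Theorem~\ref{theorem-discrete-energy-3} and is the discrete counterpart of $\lambda^*=2E(u^*)+\frac{\beta}{2}\bar\rho$. Throughout I identify $u_h^*=\Pi\mathbf u_h^*$ and use the technical Lemmas~\ref{lem:error_interpolation} and~\ref{lem:h1PU}, which quantify the $O(h)$ discrepancy between the quadrature-based discrete quantities and the genuine integrals; the mismatch between the discrete constraint $\|\mathbf u\|_2=1$ and the continuous constraint $\|u\|_{L^2(\Omega)}=1$ is controlled by Lemma~\ref{lem:error_interpolation}(i).

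\emph{Upper bound (consistency).} First I would build a recovery sequence from $u^*$ itself. Let $\mathbf w_h$ be the vector of values of $u^*$ at the quadrature points and set $\hat{\mathbf w}_h=\mathbf w_h/\|\mathbf w_h\|_2\in\calM$. Since $u^*\in C(\bar\Omega)$ for $d\le 3$, convergence of the (first-order exact) quadrature rule gives $\|\mathbf w_h\|_2^2=\sum_i w_i u^*(\mathbf x_i)^2\to\int_\Omega|u^*|^2=1$, $\langle\mathbf w_h,\mathbb V\mathbf w_h\rangle_h\to\int_\Omega V|u^*|^2$, and $\langle\mathbf w_h^2,\mathbf w_h^2\rangle_h\to\int_\Omega|u^*|^4$, while Lemma~\ref{lem:h1PU} (precisely where the hypothesis $u^*\in H^2(\Omega)\cap H_0^1(\Omega)$ enters) gives $\langle-\Delta_h\mathbf w_h,\mathbf w_h\rangle_h\to|u^*|^2_{H_0^1(\Omega)}$. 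Combining these and dividing by the normalization factor $\|\mathbf w_h\|_2\to1$ yields $E_h(\hat{\mathbf w}_h)\to E(u^*)$, and since $\mathbf u_h^*$ minimizes $E_h$ over $\calM$ we conclude $\limsup_{h\to0}E_h(\mathbf u_h^*)\le E(u^*)$.

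\emph{Lower bound and compactness.} The energy bound $E_h(\mathbf u_h^*)\le E_h(\hat{\mathbf w}_h)\le C$ together with $E_h(\mathbf u)\ge\frac12\langle-\Delta_h\mathbf u,\mathbf u\rangle_h=\frac12|\Pi\mathbf u|^2_{H_0^1(\Omega)}$ (Lemma~\ref{lem:error_interpolation}(iii)) shows that $\{u_h^*\}$ is bounded in $H_0^1(\Omega)$. I would then extract a subsequence with $u_{h_k}^*\rightharpoonup w$ weakly in $H_0^1(\Omega)$; by the compact Sobolev embedding for $d\le 3$ this gives $u_{h_k}^*\to w$ strongly in $L^2(\Omega)$ and $L^4(\Omega)$. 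Lemma~\ref{lem:error_interpolation}(i) forces $\|u_{h_k}^*\|^2_{L^2(\Omega)}=1-O(h_k)\to1$, hence $\|w\|_{L^2(\Omega)}=1$ and $w$ is admissible. Using weak lower semicontinuity of the Dirichlet energy with Lemma~\ref{lem:error_interpolation}(iii), the strong $L^2$ limit with Lemma~\ref{lem:error_interpolation}(ii) for the potential term, and the one-sided inequality Lemma~\ref{lem:error_interpolation}(iv) with the strong $L^4$ limit for the quartic term, I obtain $\liminf_k E_{h_k}(\mathbf u_{h_k}^*)\ge E(w)\ge E(u^*)$. Together with the upper bound this gives $E_h(\mathbf u_h^*)\to E(u^*)$ and $E(w)=E(u^*)$; since $u_h^*\ge0$ (positivity of the discrete minimizer) the limit satisfies $w\ge0$, so by uniqueness of the positive continuous ground state $w=u^*$, and as the subsequence was arbitrary the whole family converges.

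\emph{Strong $H^1$ convergence and the eigenvalue.} To upgrade weak to strong convergence I would argue that each energy contribution converges separately: the termwise $\liminf$'s are bounded below by the corresponding continuous quantities (Dirichlet by lower semicontinuity, quartic by Lemma~\ref{lem:error_interpolation}(iv)), the potential term converges, and their sum converges to $E(u^*)$, which forces each $\liminf$ to be an equality, and in fact a limit. In particular $\langle-\Delta_h\mathbf u_h^*,\mathbf u_h^*\rangle_h=|\Pi\mathbf u_h^*|^2_{H_0^1(\Omega)}\to|u^*|^2_{H_0^1(\Omega)}$, which combined with weak convergence yields $u_h^*\to u^*$ strongly in $H_0^1(\Omega)$. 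The same reasoning gives $\langle(\mathbf u_h^*)^2,(\mathbf u_h^*)^2\rangle_h\to\|u^*\|^4_{L^4(\Omega)}=\bar\rho$, so the eigenvalue identity produces $\lambda_h^*\to2E(u^*)+\frac{\beta}{2}\bar\rho=\lambda^*$. The main obstacle is the quartic term: Lemma~\ref{lem:error_interpolation}(iv) supplies only $\langle\mathbf u^2,\mathbf u^2\rangle_h\ge\|\Pi\mathbf u\|^4_{L^4(\Omega)}$, which is exactly aligned with the $\liminf$ needed in the lower bound but useless for the $\limsup$; the resolution is to obtain the opposite bound only for the fixed smooth recovery sequence (by quadrature convergence, using $u^*\in C(\bar\Omega)$), and to recover the precise quartic limit along the minimizing sequence indirectly, exploiting the convergence of the total energy to force termwise convergence.
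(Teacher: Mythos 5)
Your proposal is correct and follows essentially the same route as the paper's proof: a recovery sequence built from the nodal values of $u^*$ (controlled via Lemma~\ref{lem:h1PU} and uniform continuity from $H^2\hookrightarrow C^0$), the minimality upper bound, compactness plus weak lower semicontinuity and Lemma~\ref{lem:error_interpolation} for the lower bound, uniqueness of the positive ground state to identify the limit, and convergence of the $H^1$ norm to upgrade weak to strong convergence. Your explicit termwise-convergence argument for the quartic term and the use of the discrete eigenvalue identity merely spell out steps the paper leaves implicit.
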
  

\begin{proof}
    By our previous notation, $\mathbf u^*\in\bR^N$ is the vector collecting the values of $u_h^*$ at quadrature points and $\Pi(\mathbf u^*)=u_h^*$. For convenience, define $ {\mathbf u}^{exact}\in\bR^N$ as the vector representing the values of $u^*$ at quadrature points. Due to the embedding $H^2(\Omega)\hookrightarrow C^0(\Omega)$ for $d=1,2,3$ and the convergence of Riemannian integral for continuous functions, one has $\langle {\mathbf u}^{exact},{\mathbf u}^{exact} \rangle_h = \|u^*\|_{L^2(\Omega)}^2 + o(1) = 1+o(1)$, $\langle {\mathbf u}^{exact},\mathbb V {\mathbf u}^{exact}\rangle_h = \int V|u^*|^2 + o(1)$, and $\langle (({\mathbf u}^{exact})^2,({\mathbf u}^{exact})^2)\rangle_h = \int |u^*|^4 + o (1)$. Furthermore, thanks to Lemma~\ref{lem:h1PU}, we have $\mathbf E_h({\mathbf u}^{exact}) = E(u^*) + o(1)$. Then it holds that
    \begin{equation}\label{eq:error_energy1}
        \mathbf E_h(\mathbf u^*)\leq \mathbf E_h\left(\frac{{\mathbf u}^{exact}}{\langle {\mathbf u}^{exact},{\mathbf u}^{exact} \rangle_h}\right) = \mathbf E_h({\mathbf u}^{exact}) + o(1) = E(u^*) + o(1),
    \end{equation}
    which implies that $\langle -\Delta_h\mathbf u^*,\mathbf u^*\rangle_h$ is bounded as $h\to 0$.

    By Lemma~\ref{lem:error_interpolation} (i) and boundedness of $\langle -\Delta_h\mathbf u^*,\mathbf u^*\rangle_h$, one has $\norm{u_h^*}_{L^2(\Omega)} = 1 + o(1)$.  Therefore, it follows from Lemma~\ref{lem:error_interpolation} (ii)-(iv) that
    \begin{equation}\label{eq:error_energy2}
        E_h(u^*_h)=\mathbf E_h(\mathbf u^*) \geq E(u_h^*) + o(1) = E\left(\frac{u_h^*}{\norm{u_h^*}_{L^2(\Omega)}}\right) + o(1) \geq E(u^*) + o(1).
    \end{equation}
    Then we can conclude from \eqref{eq:error_energy1} and \eqref{eq:error_energy2} that $E(u^*) = E\left(\frac{u_h^*}{\norm{u_h^*}_{L^2(\Omega)}}\right) + o(1). $

    By compact embeddings $H_0^1(\Omega)\subset\subset L^2(\Omega)$ and $H_0^1(\Omega)\subset\subset L^4(\Omega)$, there exists a subsequence of the bounded sequence $\left\{\frac{u_h^*}{\norm{u_h^*}_{L^2(\Omega)}}\right\}$  converging weakly in $H_0^1(\Omega)$ and strongly in $L^2(\Omega)$ and $L^4(\Omega)$ to some function in $\in H_0^1(\Omega)$. Without loss of generality, we assume that the whole sequence converges:
    \begin{equation*}
        \frac{u_h^*}{\norm{u_h^*}_{L^2(\Omega)}}\rightarrow u_0,\quad\text{weakly in } H_0^1(\Omega)\text{ and strongly in }L^2(\Omega)\text{ and }L^4(\Omega).
    \end{equation*}
    Since the week convergence guarantees that $\|u^0\|_{H_0^1(\Omega)}\leq \liminf_{h\to 0} \norm{\frac{u_h^*}{\norm{u_h^*}_{L^2(\Omega)}}}_{H_0^1(\Omega)}$,  
    \begin{equation*}
        E(u^*)\leq E(u^0)\leq \lim_{h\to 0} E\left(\frac{u_h^*}{\norm{u_h^*}_{L^2(\Omega)}}\right) = E(u^*),
    \end{equation*}
    which implies that $E(u^0) = E(u^*)$ and hence that $u^0 = u^*$ by the uniqueness of positive ground state. Then we know that $\frac{u_h^*}{\norm{u_h^*}_{L^2(\Omega)}}\to u^*$ weakly in $H_0^1(\Omega)$ and $\norm{\frac{u_h^*}{\norm{u_h^*}_{L^2(\Omega)}}}_{H_0^1(\Omega)} \to \|u^*\|_{H_0^1(\Omega)}$. Note that the weak convergence and convergence of norm imply the strong convergence. Thus   $\frac{u_h^*}{\norm{u_h^*}_{L^2(\Omega)}}\to u^*$ and $u_h^*\to u^*$ strongly in $H_0^1(\Omega)$.

    In the discussion above, we have shown that every subsequence of $\{u_h^*\}$ has a subsequence that converges strongly in $H_0^1(\Omega)$ to $u^*$. Therefore, the whole sequence $\{u_h^*\}$ converges strongly in $H_0^1(\Omega)$ to $u^*$. This immediately implies that $\lambda_h^*\to \lambda^*$.
\end{proof}

\begin{assumption}\label{eq:continuous_eigengap}
    We assume that the multiplicity of the eigenvalue $\lambda^*$ to the following problem is one: $ -\Delta u + Vu +\beta |u^*|^2 u = \lambda u, \quad u=0\ \text{on}\ \partial\Omega$.
\end{assumption}

We remark that the smoothness of $u^*$ depends on the smoothness of $V$ \cite{lieb2001bosons}, and the assumption above can be proven with suitable smoothness assumptions on $V$ and $u^*$, e.g.,  the smallest eigenvalue of an elliptic operator $-\Delta  + V +\beta |u^*|^2 $ is simple due to the positivity and the maximum principle of the operator $(-\Delta  + V +\beta |u^*|^2)^{-1}$, implied by the Krein-Rutman Theorem (the extension of Perron-Frobenius Theorem from positive matrices to positive compact linear operators), see \cite{du2006order}*{Chapter 1}. The spectrum, including simplicity of the smallest eigenvalue, of  Schr\"odinger operators like $-\Delta  + V +\beta |u^*|^2 $, was also thoroughly studied in \cite{reed1972methods}*{Section XIII.12}.

\begin{theorem}\label{thm:positive_eigengap}
    Suppose that Assumption~\ref{eq:continuous_eigengap} and assumptions made in Theorem~\ref{thm:consist} hold. Let $(\lambda_h^1, \mathbf u^1)$ with $\langle\mathbf u^1,\mathbf u^1\rangle_h = 1$ be the second eigenpair of the linearized problem $-\Delta_h \mathbf u +  \mathbb V \mathbf u +\beta (\mathbf u^*)^2 \mathbf u = \lambda_h \mathbf u$. Then $ \liminf_{h\to 0} \lambda_h^1 - \lambda_h^* > 0$.
\end{theorem}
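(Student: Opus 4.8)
The plan is to argue by contradiction, reducing the positivity of the discrete eigengap to the simplicity of the continuous eigenvalue $\lambda^*$ through a compactness argument. First I would record that, by Theorem~\ref{theorem-discrete-energy-3}, the positive discrete ground state $\mathbf u^*$ is precisely the eigenvector of the smallest eigenvalue of $A_{\mathbf u^*}$, so $\lambda_h^0 = \lambda_h^*$; combined with Theorem~\ref{thm:consist} this gives $\lambda_h^0 = \lambda_h^* \to \lambda^*$ as $h\to 0$. Hence it suffices to prove $\liminf_{h\to 0}\lambda_h^1 > \lambda^*$. Suppose not; since $\lambda_h^1 \ge \lambda_h^0$ always, there is a subsequence (not relabeled) along which $\lambda_h^1 \to \lambda^*$. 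Writing $u_h^1 = \Pi\mathbf u^1$ for the interpolant of the normalized discrete second eigenvector, I would produce a weak limit $w\in H_0^1(\Omega)$ that is a unit vector orthogonal to $u^*$ in $L^2(\Omega)$ and satisfies $\langle Lw,w\rangle \le \lambda^*$ for the linearized operator $L = -\Delta + V + \beta|u^*|^2$; by Assumption~\ref{eq:continuous_eigengap} and the min-max principle this forces $\lambda^* \ge \lambda^{*,1} > \lambda^*$, where $\lambda^{*,1}$ denotes the second smallest eigenvalue of $L$, strictly larger than $\lambda^*$ by the assumed simplicity. This contradiction yields the claim.

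For the compactness step, note that $\lambda_h^1 = \langle A_{\mathbf u^*}\mathbf u^1,\mathbf u^1\rangle_h \ge \langle -\Delta_h \mathbf u^1, \mathbf u^1\rangle_h + V_{\min}$ (using $\langle\mathbb V\mathbf u^1,\mathbf u^1\rangle_h\ge V_{\min}$ and nonnegativity of the nonlinear term), so by Lemma~\ref{lem:error_interpolation}(iii) the seminorms $|u_h^1|_{H_0^1(\Omega)}^2 = \langle -\Delta_h\mathbf u^1,\mathbf u^1\rangle_h$ are bounded and $\{u_h^1\}$ is bounded in $H_0^1(\Omega)$. Passing to a further subsequence, $u_h^1 \rightharpoonup w$ weakly in $H_0^1(\Omega)$ and strongly in $L^2(\Omega)$ and $L^4(\Omega)$ by compact embedding. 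That $\|w\|_{L^2(\Omega)} = 1$ follows from $\langle\mathbf u^1,\mathbf u^1\rangle_h = 1$, Lemma~\ref{lem:error_interpolation}(i) (whose right-hand side is $O(h)$ times the bounded seminorm), and strong $L^2$ convergence; similarly $\langle w,u^*\rangle_{L^2} = 0$ follows from $\langle \mathbf u^1,\mathbf u^*\rangle_h = 0$ by polarizing Lemma~\ref{lem:error_interpolation}(i) and using the strong convergence $u_h^* \to u^*$ from Theorem~\ref{thm:consist}.

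It then remains to pass to the limit in $\lambda_h^1 = |u_h^1|_{H_0^1(\Omega)}^2 + \langle \mathbb V\mathbf u^1,\mathbf u^1\rangle_h + \beta\langle(\mathbf u^*)^2,(\mathbf u^1)^2\rangle_h$. Weak lower semicontinuity gives $|w|_{H_0^1(\Omega)}^2 \le \liminf |u_h^1|_{H_0^1(\Omega)}^2$, and Lemma~\ref{lem:error_interpolation}(ii) together with strong $L^2$ convergence gives $\langle\mathbb V\mathbf u^1,\mathbf u^1\rangle_h \to \int_\Omega V|w|^2$. The \emph{crux} is the quartic term $\langle(\mathbf u^*)^2,(\mathbf u^1)^2\rangle_h = \sum_i w_i (u_i^*)^2(u_i^1)^2$, in which the weight $(\mathbf u^*)^2$ is the squared \emph{moving} discrete ground state rather than a fixed smooth function. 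My plan is a two-step replacement. First, swap $\mathbf u^*$ for the nodal sampling $\mathbf u^{\mathrm{exact}}$ of the continuous $u^*$, estimating the error by Cauchy–Schwarz in $\langle\cdot,\cdot\rangle_h$, Lemma~\ref{lemma-norm}(iii), and the discrete $L^4$ convergence $\|(\mathbf u^* - \mathbf u^{\mathrm{exact}})^2\|_2 \to 0$, which follows from $u_h^*\to u^*$ and $\Pi\mathbf u^{\mathrm{exact}}\to u^*$ in $L^4(\Omega)$ and the norm equivalence \eqref{discretpnorm-equivalence}. Second, recognize the resulting sum $\sum_i w_i |u^*(\bx_i)|^2 (u_i^1)^2$ as a discrete potential energy with the continuous bounded weight $|u^*|^2$, so that Lemma~\ref{lem:error_interpolation}(ii) applied with $|u^*|^2$ in place of $V$ (using $u^*\in H^2(\Omega)\cap C(\bar\Omega)$, together with a uniform approximation of $|u^*|^2$ by $C^1$ functions if only continuity is available) plus strong $L^2$ convergence yields $\beta\langle(\mathbf u^*)^2,(\mathbf u^1)^2\rangle_h \to \beta\int_\Omega |u^*|^2|w|^2$. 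Collecting the three limits gives $\lambda^* = \lim\lambda_h^1 \ge \langle Lw,w\rangle$, and the variational characterization $\langle Lw,w\rangle \ge \lambda^{*,1}$ for unit $w\perp u^*$ closes the contradiction. The hardest part is exactly this quartic passage: since the weight is the discrete ground state and the quadrature only samples nodal values, the interpolant of a product is not the product of interpolants and no uniform $L^\infty$ bound on the eigenfunctions is at hand, which is why the swap-to-continuous-weight device is set up to use only $L^4$ estimates before reducing everything to the already-established potential-energy quadrature bound.
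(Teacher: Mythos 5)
Your argument is correct and shares the paper's overall architecture (contradiction, $H^1$-boundedness of $u_h^1=\Pi\mathbf u^1$ from the Rayleigh identity, extraction of a weak $H_0^1$ / strong $L^2,L^4$ limit, term-by-term passage to the limit, and a contradiction with the simplicity of $\lambda^*$), but it diverges at exactly the step you flag as the crux. The paper handles $\langle(\mathbf u^*)^2\mathbf u^1,\mathbf u^1\rangle_h$ by replacing both $\mathbf u^*$ and $\mathbf u^1$ with their piecewise-constant cell averages $\Bar{\Pi}\mathbf u^*,\Bar{\Pi}\mathbf u^1$ (Lemma~\ref{lem:L2error2} gives $O(h)$ $L^2$-closeness), passing to a.e.-convergent subsequences, and invoking Fatou's lemma to get only the one-sided bound $\liminf_{h}\langle(\mathbf u^*)^2\mathbf u^1,\mathbf u^1\rangle_h\geq\int|u^*|^2|u^1|^2$ --- which is all that is needed. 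Your two-step replacement (swap $\mathbf u^*$ for the nodal sampling of the continuous $u^*$ via discrete Cauchy--Schwarz, Lemma~\ref{lemma-norm}(iii) and the norm equivalence \eqref{discretpnorm-equivalence}, then treat $|u^*|^2$ as a potential in Lemma~\ref{lem:error_interpolation}(ii)) is also sound and in fact yields genuine convergence of the quartic term rather than a liminf inequality; the price is the extra $C^1$-approximation of $|u^*|^2$ (Lemma~\ref{lem:error_interpolation}(ii) is stated for $C^1$ potentials while $u^*\in H^2$ only gives continuity for $d\leq 3$) and the resulting iterated $h\to0$, $\epsilon\to0$ limit, which you correctly anticipate but leave as a sketch. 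Your endgame also differs: you conclude via the min--max characterization of the second eigenvalue of $L=-\Delta+V+\beta|u^*|^2$ over unit vectors orthogonal to $u^*$, whereas the paper shows the chain of inequalities collapses to equality, forces the limit $u^1$ to be a ground state of $L$, and then contradicts the limiting orthogonality $(u^*,u^1)_{L^2}=0$; the two closings are equivalent given Assumption~\ref{eq:continuous_eigengap}. The only caveat worth recording is that your appeal to Theorem~\ref{theorem-discrete-energy-3} to identify $\lambda_h^*$ with the smallest eigenvalue of $A_{\mathbf u^*}$ presupposes the mesh condition \eqref{simpicialmesh}, which is not among the stated hypotheses (shape regularity only); the paper makes the same identification implicitly in the remark following the theorem, so this is not a defect of your argument relative to the paper's.
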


We remark that $\lambda_h^*$ in Theorem~\ref{thm:positive_eigengap} is exactly $\lambda_h^0$ in Assumption~\ref{asp:ustar}. In this sense Theorem~\ref{thm:positive_eigengap} validates Assumption~\ref{asp:ustar} with a positive lower bound for the eigengap that is independent of the mesh size $h$. % 

\begin{proof}[Proof of Theorem~\ref{thm:positive_eigengap}]
    Suppose that there is no positive eigengap. Then $\lambda_h^1\to \lambda^*$ when passing to a subsequence. Notice that
    \begin{equation}\label{eq:energy_uh1}
        \langle -\Delta_h \mathbf u^1,\mathbf u^1\rangle_h +  \langle \mathbb V \mathbf u^1,\mathbf u^1\rangle_h +\beta \langle (\mathbf u^*)^2 \mathbf u^1,\mathbf u^1\rangle_h = \lambda_h^1,
    \end{equation}
    which implies the boundedness of $|u_h^1|_{H_0^1(\Omega)} \leq \sqrt{\langle -\Delta_h \mathbf u^1,\mathbf u^1\rangle_h}$, where $u_h^1 = \Pi\mathbf u^1$ and we used Lemma~\ref{lem:error_interpolation} (iii). There exists $u^1\in H_0^1(\Omega)$ such that $u_h^1$ converges to $u^1$, a.e., weakly in $H_0^1(\Omega)$, and strongly in $L^2(\Omega)$ and $L^4(\Omega)$, when passing to another subsequence.

    Let us analyze the limiting behavior of each term in \eqref{eq:energy_uh1}. It follows from the weak convergence $u_h^1\to u^1$ in $H_0^1(\Omega)$ that
    \begin{equation*}
        \liminf_{h\to 0} \langle -\Delta_h \mathbf u^1,\mathbf u^1\rangle_h \geq \liminf_h \|u_h^1\|_{H_0^1(\Omega)}^2\geq \|u^1\|_{H_0^1(\Omega)}^2.
    \end{equation*}
    By Fatou's lemma and Lemma~\ref{lem:error_interpolation} (ii), one has
    \begin{equation*}
        \liminf_{h\to 0} \langle \mathbb V \mathbf u^1,\mathbf u^1\rangle_h = \liminf_{h\to 0} \int V |u_h^1|^2 \geq \int V|u^1|^2.
    \end{equation*}
    Let $\Bar{u}_h^* = \Bar{\Pi} \mathbf u^*$ and $\Bar{u}_h^1 = \Bar{\Pi} \mathbf u^1$ be the piecewise constant approximation of $\mathbf u^*$ and $\mathbf u^1$ in the sense of the one in Lemma~\ref{lem:L2error2}, then 
    \begin{equation*}
        \lim_{h\to 0} \|\Bar{u}_h^* - u_h^*\|_{L^2(\Omega)} = 0\quad\text{and}\quad \lim_{h\to 0} \|\Bar{u}_h^1 - u_h^1\|_{L^2(\Omega)} = 0,
    \end{equation*}
    which implies $\Bar{u}_h^* - u_h^* \to 0$ and $\Bar{u}_h^1 - u_h^1 \to 0$ a.e. when passing to a subsequence. By Fatou's lemma, it holds that
    \begin{equation*}
        \liminf_{h\to 0}  \langle (\mathbf u^*)^2 \mathbf u^1,\mathbf u^1\rangle_h = \liminf_{h\to 0} \int |\Bar{u}_h^*|^2 |\Bar{u}_h^1|^2 \geq \int |u^*|^2 |u^1|^2.
    \end{equation*}
    Then taking limit for \eqref{eq:energy_uh1} as $h\to 0$, one obtains that
    \begin{equation*}
        \lambda^* \geq \int |\nabla u^1|^2 +  V |u^1|^2 +  \beta|u^*|^2 |u^1|^2 \geq \lambda^* \int |u^1|^2 = \lambda^*,
    \end{equation*}
    where the last equality follows from $\norm{u_h^1}_{L^2(\Omega)} = 1 + o(1)$ by Lemma~\ref{lem:error_interpolation} (i) and $u_h^1\to u^1$ strongly in $L^2(\Omega)$. This implies that $u^1$ is also a ground state of the linearized problem at $u^*$. However, one has from Lemma~\ref{lem:error_interpolation} (i) that
    \begin{equation*}
        0 = \langle \mathbf u^*,\mathbf u^1\rangle_{h} = 1-\frac{1}{2}\|\mathbf u^* - \mathbf  u_h^1\|_2^2 =  1-\frac{1}{2}\|u_h^* - u_h^1\|_{L^2(\Omega)}^2 + O(h^2),
    \end{equation*}
    which implies that $0 = 1-\frac{1}{2}\|u^* - u^1\|_{L^2(\Omega)}^2 =  (u^*,u^1)_{L^2(\Omega)}$, leading to a contradiction.
\end{proof}

\end{document}